\title{Twisted calculus on affinoid algebras}
\author{Bernard Le Stum \& Adolfo Quir\'os\thanks{Supported by grant MTM2015-68524-P (MINECO/FEDER, Spain/EU).}}
\date{Version of \today}
\newtheorem{thm}{Theorem}[section]
\newtheorem{prop}[thm] {Proposition}
\newtheorem{cor}[thm] {Corollary}
\newtheorem{lem}[thm] {Lemma}
\newtheorem{dfn}[thm] {Definition}
\newtheorem*{thm*}{Theorem}
\newenvironment{xmp}[1][Example]{\begin{trivlist} \item[\hskip \labelsep {\bfseries #1}]}{\end{trivlist}}
\newenvironment{xmps}[1][Examples]{\begin{trivlist} \item[\hskip \labelsep {\bfseries #1}]}{\end{trivlist}}
\newenvironment{rmk}[1][Remark]{\begin{trivlist} \item[\hskip \labelsep {\bfseries #1}]}{\end{trivlist}}
\newenvironment{rmks}[1][Remarks]{\begin{trivlist} \item[\hskip \labelsep {\bfseries #1}]}{\end{trivlist}}
\begin{document}

\maketitle

\bigskip

\begin{center}
\textbf{Abstract}
\end{center}

We introduce the notion of a twisted differential operator of given radius relative to an endomorphism $\sigma$ of an affinoid algebra $A$. We show that this notion is essentially independent of the choice of the endomorphism $\sigma$. As a particular case, we obtain an explicit equivalence between modules endowed with a usual integrable connection (i.e. differential systems) and modules endowed with a $\sigma$-connection of the same radius (i.e. $q$-difference systems). Moreover, this equivalence preserves cohomology and in particular solutions.

\medskip

\tableofcontents

\section*{Introduction}
\addcontentsline{toc}{section}{Introduction}

Confluence is the process that consists in replacing a differential equation with a difference or $q$-difference equation in order to get a good approximation of the solutions. In ultrametric analysis, it happens that one can do a lot better:  with some mild extra hypothesis, there exists a difference or $q$-difference equation that has exactly the \emph{same} solutions. Actually, there exists a one to one correspondence between differential equations on one side and difference or $q$-difference equations on the other (that induces a bijection on the solutions).

The last years have seen a lot of progress in this direction.
In \cite{AndreDiVizio04}, Yves Andr\'e and Lucia Di Vizio compute the tannakian group of $q$-difference equations (with Frobenius) over the Robba ring. It happens to be the exactly the same as the tannakian group of differential equations over the Robba ring.
 As a corollary, they obtain an equivalence of categories.
Shortly after, Andrea Pulita in \cite{Pulita08} could prove a general equivalence between differential equations and difference equations on an affinoid open subset of the line:
the main idea is to use formal solutions as a bridge between the two worlds. In \cite{Pulita17}, he improves on his result in various directions.

Our approach consists in generalizing the notion of differential operator so that it includes difference operators and $q$-difference operators as we did in the algebraic situation in \cite{LeStumQuiros18}.
In fact, this is very general.
First of all, if $R$ is any affinoid algebra, $A$ is an affinoid $R$-algebra and $\sigma$ is an endomorphism of $A$, one introduces the notion of \emph{$\sigma$-coordinate} $x$ on $A$ that generalizes the notion of \'etale coordinate $x$ when $A$ is smooth over $R$ and $\sigma = \mathrm{Id}_{A}$.
The cases $\sigma(x) = x + h$ and $\sigma(x) = qx$ will provide difference and $q$-difference operators respectively (and they can be unified by using $\sigma(x) = qx + h$).
With some conditions on $A$ and $\sigma$, one can define the ring $D_{A/R,\sigma}^{(\eta)}$ of $\sigma$-differential operators of radius $\eta$ on $A$.
We show in theorem \ref{rnghom} that if $\tau$ is any other such $R$-endomorphism of $A$, then there exists a canonical isomorphism
\[
D_{A/R,\sigma}^{(\eta)} \simeq D_{A/R,\tau}^{(\eta)}.
\]
This is totally explicit in the sense that we have for example
\[
\partial_{\sigma} = \sum_{k=1}^\infty \left( \prod_{i=1}^{k-1} \left( \sigma(x) - \tau^i(x)\right)\right) \partial_{\tau}^{[k]}.
\]
From the particular cases $\tau = \mathrm{Id}_{A}$ and $\sigma(x) = qx + h$, we deduce our confluence theorem \ref{confl} (see also theorem 6.3 ii) of \cite{Pulita08}).
It would be necessary to introduce some more vocabulary in order to state now this theorem in full generality.
Nevertheless, as an illustration, we can indicate the following corollary (that makes the link with the original result of Andr\'e and Di Vizio as well as with the work of Pulita):  

\begin{thm*}
Let $K$ be a non trivial complete ultrametric field of characteristic zero and $X$ the closed annulus $r_{1} \leq |x| \leq r$.
Let $q,h \in K$ and $\eta \geq 0$ be such that
\[
\|h\| \leq \eta < r_{1} , \quad \|1 - q\| \leq \frac \eta r,
\]
and $q$ is \emph{not} a root of unity.
Then, $X$ is globally stable under the endomorphism $\sigma(x) = qx + h$, and there exists a fully faithful functor
\[
\nabla \mathrm{-Mod}^{(\eta\dagger)}(X) \hookrightarrow \sigma\mathrm{-Mod}(X)
\]
from the category of modules with connection whose (naive) radius of convergence is at least $\eta$, to the category of $\sigma$-modules.
\end{thm*}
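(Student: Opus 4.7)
The plan is to reduce the theorem to the change-of-endomorphism Theorem~\ref{rnghom} (equivalently, the confluence Theorem~\ref{confl}), applied with $R=K$, with $A$ the Tate algebra of the annulus $X$, with $\sigma(x)=qx+h$ and with $\tau=\mathrm{Id}_A$; what remains is bookkeeping plus one numerical check. The check: from $\|1-q\|\leq \eta/r < r_1/r\leq 1$ and the ultrametric triangle inequality, $\|q\|=1$; then for any $x$ with $r_1\leq|x|\leq r$,
\[
|\sigma(x)-x|=|(q-1)x+h|\leq \max(\|q-1\|\,|x|,\|h\|)\leq\eta<r_1\leq|x|,
\]
so $|\sigma(x)|=|x|\in[r_1,r]$, which shows $\sigma$ restricts to an endomorphism of $A$. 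The same estimate at the Gauss norm gives $\|\sigma(x)-x\|_A\leq\eta$, i.e.\ $x$ is a $\sigma$-coordinate on $A$ of radius $\eta$ (with the same $x$ serving as étale coordinate for $\mathrm{Id}_A$).

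Since $q$ is not a root of unity, $\sigma^i(x)=q^ix+h(q^i-1)/(q-1)\neq x$ in $A$ for all $i\geq 1$, so the hypotheses of Theorem~\ref{rnghom} are met. It delivers a canonical ring isomorphism $D^{(\eta)}_{A/K,\mathrm{Id}}\simeq D^{(\eta)}_{A/K,\sigma}$, inducing an equivalence between modules with integrable connection on $X$ of naive radius of convergence $\geq\eta$ (that is, $\nabla\text{-Mod}^{(\eta\dagger)}(X)$) and $D^{(\eta)}_{A/K,\sigma}$-modules, the latter being a full subcategory of $\sigma\text{-Mod}(X)$ cut out by a convergence condition on the higher twisted operators $\partial_\sigma^{[k]}$. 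Composing this equivalence with the tautological inclusion into $\sigma\text{-Mod}(X)$ yields the functor of the statement: faithful since the inclusion is, and full because any morphism of the underlying $\sigma$-modules automatically commutes with the higher operators $\partial_\sigma^{[k]}$ (these are reconstructed from the $\sigma$-action via the above isomorphism). Essential surjectivity generally fails since an arbitrary $\sigma$-module need not extend to a $D^{(\eta)}_{A/K,\sigma}$-module.

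The main obstacle, were one proving everything from scratch rather than citing \ref{rnghom}, would lie entirely in establishing the convergence, in the operator norms on $D^{(\eta)}$, of the explicit change-of-generator series
\[
\partial_\sigma=\sum_{k=1}^\infty \Big(\prod_{i=1}^{k-1}(\sigma(x)-\tau^i(x))\Big)\partial_\tau^{[k]}
\]
and its inverse. The two numerical hypotheses $\|h\|\leq\eta$ and $\|1-q\|\leq\eta/r$ are exactly what controls the coefficients on the annulus of inner radius $r_1$ and outer radius $r$, making both series converge; granted this, the present corollary reduces to the stability check above.
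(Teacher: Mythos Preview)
Your overall strategy coincides with the paper's: reduce to the deformation isomorphism $D^{(\eta)}_{A/K,\sigma}\simeq D^{(\eta)}_{A/K,\mathrm{Id}}$ and then pass from twisted differential operators to $\sigma$-modules. The stability check for the annulus is fine, and the reduction to Theorem~\ref{rnghom} is correct in spirit (though strictly speaking the equivalence with $\nabla\text{-Mod}^{(\eta\dagger)}$ lives at the $\eta\dagger$ level, not $\eta$: see Propositions~\ref{direct}, \ref{prim} and \ref{catet}).

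There is, however, a real gap in your fullness argument. You assert that a $\sigma$-linear map automatically commutes with the $\partial_\sigma^{[k]}$ because ``these are reconstructed from the $\sigma$-action via the above isomorphism''. But the isomorphism $D^{(\eta)}_{A/K,\sigma}\simeq D^{(\eta)}_{A/K,\mathrm{Id}}$ of Theorem~\ref{rnghom} is a change of generators (it rewrites $\partial_\sigma^{[k]}$ in terms of the $\partial^{[j]}$), not a statement that $\sigma$ generates $D^{(\eta)}_{A/K,\sigma}$ over $A$. What is actually needed is the relation $1-\sigma=(x-\sigma(x))\partial_\sigma$ together with invertibility of $x-\sigma(x)$, so that $\partial_\sigma=(x-\sigma(x))^{-1}(1-\sigma)$ and hence $\partial_\sigma^{[k]}=\partial_\sigma^{k}/(k)_q!$ (using that $q$ is not a root of unity) are expressible through $\sigma$. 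This is precisely the paper's route: the Corollary to Theorem~\ref{confl} assumes that $x$ is a \emph{strong} quantum coordinate, i.e.\ $x-\sigma(x)\in A^\times$, which makes the map $A[T]_\sigma\to D_\sigma$ of \eqref{strong} an isomorphism and yields the equivalence $\nabla_\sigma\text{-Mod}\simeq\sigma\text{-Mod}$. You neither state nor verify this hypothesis, and without it the forgetful functor $D^{(\eta\dagger)}_\sigma\text{-Mod}\to\sigma\text{-Mod}$ need not be full. (The side remark that $\sigma^i(x)\neq x$ for $i\geq 1$ is not a hypothesis of Theorem~\ref{rnghom} and does not substitute for strongness.)
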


We want to emphasize again the fact that, even if this functor already exists in Pulita's work, our methods provide explicit formulas: we will have
\[
\sigma(s) = \sum_{k=0}^\infty  \frac {((q-1)x+h)^{k}}{k!} \partial^{k}(s).
\]
Moreover, we obtain an isomorphism on cohomology $H^*_{\partial}(M) = H^*_{\sigma}(M)$.

Note that we exclude here the case where $q$ is a root of unity.
Actually, there exists also a formal confluence theorem (theorem 9.13 of \cite{LeStumQuiros18}) in this situation but it is a lot more technical.
This is however very interesting because, as the first author showed with Michel Gros in \cite{GrosLeStum13} (see also the more recent \cite{GrosLeStumQuiros17}), there exists a quantum Simpson correspondence when $q$ is a root of unity.
One can hope that an ultrametric version of these theorems could provide an ultrametric Simpson correspondence.
One could also use the analytic density lemma \eqref{andens} below and the Azumaya nature of the ring of quantum differential operators in order to attack Dixmier's conjecture (Problem 11.1 of \cite{Dixmier68}).
We intend to come back on these questions in the future.

We fix a non trivial complete ultrametric field $K$ and refer the reader to \cite{BoschGuntzerRemmert84} for the theory of affinoid algebras.
When we consider a real number, we always assume that some power of it lies in $|K|$.
We recall that an \emph{affinoid algebra} is a quotient $A$  (by an ideal) of a \emph{Tate algebra}
\[
K\{T_{1}, \ldots, T_{N}\} := \left\{\sum_{i\geq 0} a_{\underline i} T^{\underline i}, \quad |a_{i} | \to 0 \right\}.
\]
The quotient norm (of the Gauss norm) turns $A$ into a Banach algebra and any morphism of affinoid algebras is automatically continuous (and any ideal is closed) for any Banach structures.
We recall that a \emph{finite $A$-module} is a quotient $M$ (by a submodule) of a free module $A^r$.
The quotient norm turns it into a Banach $A$-module and any morphism between finite $A$-modules is continuous (and any submodule is closed) for any Banach structure.
Note however that tensor product of affinoid algebras has to be replaced by \emph{completed tensor product} in order to stay inside the category of affinoid algebras.

We also want to recall from definition 2.5 of \cite{LeStumQuiros15}, that the \emph{quantum binomial coefficients} are defined by the \emph{quantum Pascal identities} (in a ring $R$ for some $q \in R$)
\[
{n \choose k}_{q} = {n-1 \choose k-1}_{q} + q^k {n-1 \choose k}_{q}
\]
for $n,k \geq 1$ with initial conditions
\[
{n \choose 0}_{q} = 1 \quad \mathrm{and} \quad {0 \choose k}_{q} = 0 \quad \mathrm{for} \quad k \neq 0.
\]
We will also consider the \emph{quantum integers} $(n)_{q} := {n \choose 1}_{q}$ and the \emph{quantum factorials} $(n)_{q}! := (n)_{q} (n-1)_{q} \cdots (2)_{q}(1)_{q}$.
We will use the prefix ``$q$-'' instead of the attribute \emph{quantum} when we want to specify the parameter.

Throughout the article, $R$ denotes an affinoid algebra (implicitly endowed with a fixed Banach norm).

\section{Twisted calculus}

We explain here how some material introduced in \cite{LeStumQuiros18b} and \cite{LeStumQuiros18} has to be modified (or not) in order to work in the setting of affinoid algebras.

\subsection{Twisted affinoid algebras}

We define a \emph{twisted affinoid $R$-algebra} as an affinoid $R$-algebra $A$ endowed with an endomorphism $\sigma$.
They form a category in the obvious way.
A Banach norm on a twisted affinoid algebra will be called \emph{contractive} if $\sigma$ is contractive for this norm.
For example, when $A$ is reduced, then the spectral norm is contractive no matter what $\sigma$ is.

\begin{lem}
If $A$ is a twisted affinoid $R$-algebra and $\pi : R\{T_{1}, \ldots, T_{N}\} \twoheadrightarrow A$ is a surjective homomorphism, then the following are equivalent:
\begin{enumerate}
\item the quotient norm is contractive,
\item $\sigma$ lifts to $R\{T_{1}, \ldots, T_{N}\}$.
\end{enumerate}
\end{lem}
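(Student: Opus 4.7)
The plan is to treat the two implications separately, exploiting throughout the universal property of the Tate algebra: continuous $R$-algebra endomorphisms of $R\{T_{1}, \ldots, T_{N}\}$ are in bijection with $N$-tuples of elements of Gauss norm at most $1$, the bijection sending $\tilde\sigma$ to $(\tilde\sigma(T_{1}), \ldots, \tilde\sigma(T_{N}))$. For $(2) \Rightarrow (1)$, I would begin with a lift $\tilde\sigma$ and observe that the universal property already forces $\|\tilde\sigma(T_{i})\|_{\mathrm{Gauss}} \leq 1$. A term-by-term ultrametric estimate on $\tilde\sigma(\sum a_{\underline n} T^{\underline n}) = \sum a_{\underline n} \tilde\sigma(T)^{\underline n}$ then shows that $\tilde\sigma$ is contractive for the Gauss norm. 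Passing to the quotient is straightforward: for $a \in A$ and any lift $f \in \pi^{-1}(a)$ one has $\sigma(a) = \pi(\tilde\sigma(f))$, hence $\|\sigma(a)\|_{\mathrm{quot}} \leq \|\tilde\sigma(f)\|_{\mathrm{Gauss}} \leq \|f\|_{\mathrm{Gauss}}$, and taking the infimum over $f$ yields $\|\sigma(a)\|_{\mathrm{quot}} \leq \|a\|_{\mathrm{quot}}$.

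For $(1) \Rightarrow (2)$, set $t_{i} := \pi(T_{i})$, so contractivity of $\sigma$ for the quotient norm together with the obvious bound $\|t_{i}\|_{\mathrm{quot}} \leq \|T_{i}\|_{\mathrm{Gauss}} = 1$ gives $\|\sigma(t_{i})\|_{\mathrm{quot}} \leq 1$ for each $i$. The crux is then to produce, for each $i$, a lift $f_{i} \in R\{T_{1}, \ldots, T_{N}\}$ of $\sigma(t_{i})$ with $\|f_{i}\|_{\mathrm{Gauss}} \leq 1$. Once such lifts are in hand, the universal property extends the assignment $T_{i} \mapsto f_{i}$ to a unique continuous $R$-algebra endomorphism $\tilde\sigma$ of the Tate algebra, and since $\pi \circ \tilde\sigma$ and $\sigma \circ \pi$ coincide on the topological generators $T_{i}$ they agree everywhere, so $\tilde\sigma$ is a lift of $\sigma$.

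The hard part will be guaranteeing the existence of such lifts with Gauss norm at most $1$; this is the strictness of the surjection $\pi$, namely the assertion that the closed unit ball of $A$ for the quotient norm is exactly the image of the closed unit ball of $R\{T_{1}, \ldots, T_{N}\}$. I regard this as the only nontrivial point in the proof, but it is a standard feature of affinoid quotients, a consequence of the open mapping theorem in the ultrametric Banach setting combined with the attainability of the quotient-norm infimum for closed ideals in Tate algebras; I would invoke it as a black box from \cite{BoschGuntzerRemmert84} rather than reproving it.
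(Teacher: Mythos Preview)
Your proof is correct and follows the same approach as the paper's. The paper is terser---it reduces both conditions to the single intermediate statement that $\|(\sigma \circ \pi)(T_{i})\|_{\mathrm{quot}} \leq 1$ for all $i$---but implicitly relies on exactly the strictness of $\pi$ that you correctly single out as the only nontrivial point.
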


\begin{proof}
The second condition means that there exists for all $i = 1, \ldots, N$, $F_{i} \in R\{T_{1}, \ldots, T_{N}\}$ such that $\|F_{i}\| \leq 1$ and $(\sigma \circ \pi)(T_{i}) = \pi(F_{i})$.
This is equivalent to saying that, for the quotient norm, we have for all $i = 1, \ldots, N$, $\|(\sigma \circ \pi)(T_{i})\| \leq 1$.
This means that $\sigma$ is contractive.\end{proof}

\begin{xmps} \label{xmp}
\begin{enumerate}
\item
Let
\[
A := R\{x/r, r_{1}/x\} := \left\{\sum_{-\infty}^\infty a_{n} x^n, a_{n} \in R\ \mathrm{and}
\left\{\begin{array} {l} \|a_{n}\|r^n \to 0\ \mathrm{when}\ n \to \infty \\  \|a_{n}\|r_{1}^n \to 0\ \mathrm{when}\ n \to -\infty \end{array}\right.
\right\}
\]
with $0 < r_{1} \leq r$.
This is the ring of functions on the closed relative annulus and it comes with the usual norm
\[
\left\| \sum_{-\infty}^\infty a_{n} x^n \right\| := \max \{ \|a_{n}\|r^n, n \geq 0 ; \|a_{n}\|r_{1}^n, n \leq 0\}.
\]
Giving an $R$-algebra endomorphism $\sigma$ of $A$ amounts to giving an element $\sigma(x) = \sum_{-\infty}^\infty a_{n} x^n \in A$ with the condition $r_{1} \leq \|\sigma(x)\| \leq r$, which can be rewritten
\[
r_{1} \leq  \max \{ \|a_{n}\|r^n, n \geq 0 ; \|a_{n}\|r_{1}^n, n \leq 0\} \leq r.
\]
\item
As a particular case, we may look for an endomorphism of the form $\sigma(x) = qx+h$ with $q, h \in R$.
Then, the conditions become
\begin{equation} \label{cond1}
\|q\| \leq 1 \quad \mathrm{and} \quad  \|h\| \leq r,
\end{equation}
and
\begin{equation} \label{cond2}
\mathrm{either}\quad \|q\| \geq \frac{r_{1}}r  \quad \mathrm{or} \quad \|h\| \geq r_{1}.
\end{equation}
\item Note that such an endomorphism need not be bijective.
Actually, this happens exactly when $q \in R^\times$ and
\[
\|q\| = \|q^{-1}\| = 1 \quad \mathrm{and} \quad  \|h\| \leq r.
\]
\end{enumerate}
\end{xmps}

\subsection{Twisted modules}

As we did in \cite{LeStumQuiros18b} and \cite{LeStumQuiros18} in the algebraic situation, one can extend many constructions on affinoid $R$-algebras (case $\sigma = \mathrm{Id}_{A}$) to analog constructions on \emph{twisted} affinoid $R$-algebras (with non trivial endomorphism $\sigma$).
We will then use the attribute \emph{twisted} (or the prefix ``$\sigma$-'') and replace it with the word \emph{usual} (or remove the prefix) in the case $\sigma = \mathrm{Id_{A}}$.
Also, we will try to make the notations as light as possible by not mentioning $A$ and $R$ but we will add the subscript ``$A/R$''  when necessary.

As a first example, we may consider \emph{twisted $A$-modules}, which are \emph{finite} $A$-modules $M$ endowed with a $\sigma$-linear endomorphism $\sigma_{M}$.
Note that $\sigma_{M}$ is automatically continuous since it splits as $M \to \sigma^*M \to M$.
We will denote by $\sigma\mathrm{-Mod}(A/R)$ the category of twisted $A$-modules.
In order to better understand these objects, it is convenient to introduce the \emph{twisted polynomial ring} $A[T]_{\sigma}$: this is the non-commutative polynomial ring with the commutation rule $Tz=\sigma(z)T$.
Then, there exists an equivalence of categories ($\sigma_{M}$ acts as multiplication by $T$):
\[
\sigma\mathrm{-Mod}(A/R) \simeq A\mathrm{-finite}\ A[T]_{\sigma}\mathrm{-Mod}.
\]
Moreover, it induces an isomorphism:
\[
\mathrm H^*_{\sigma}(M) := \mathrm H^*[M \overset {1 - \sigma_{M}} \longrightarrow M] \simeq \mathrm{Ext}^*_{A[T]_{\sigma}}(A, M).
\]

\subsection{Twisted derivations}

A \emph{twisted derivation} on $A$ with values in a finite $A$-module $M$ is an $R$-linear map $D : A \to M$ satisfying the \emph{twisted Leibniz rule}:
\[
\forall z_{1}, z_{2} \in A, \quad D(z_{1}z_{2}) = z_{2}D(z_{1}) + \sigma(z_{1})D(z_{2}).
\]

\begin{prop} \label{contprop}
A twisted derivation $D$ on a twisted affinoid $R$-algebra $A$ is automatically continuous.
\end{prop}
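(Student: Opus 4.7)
The plan is to apply the classical ``dual numbers'' trick in a twisted form. I would equip the $R$-module $A \oplus M$ with the product
\[
(a, m)(a', m') := (aa',\ a'm + \sigma(a)m'),
\]
which is easily verified to be associative with unit $(1,0)$, making $A \oplus M$ into a (generally non-commutative) $R$-algebra. The twisted Leibniz rule is then precisely the condition that the $R$-linear map
\[
\phi : A \to A \oplus M,\qquad \phi(a) := (a, D(a)),
\]
be an $R$-algebra homomorphism, since one checks directly that $\phi(a)\phi(b) = (ab,\, b D(a) + \sigma(a) D(b)) = \phi(ab)$.

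After replacing the Banach norm on $A$ by one for which $\sigma$ is contractive (always achievable by applying the preceding lemma to a suitable presentation), the product norm on $A \oplus M$ is submultiplicative: from $\|\sigma(a)\| \leq \|a\|$ one immediately obtains $\|xy\| \leq \|x\|\,\|y\|$ for $x,y \in A\oplus M$. Hence $A \oplus M$ is a Banach $R$-algebra, and the second projection $p_2 : A \oplus M \to M$ is continuous; consequently, continuity of $D = p_2 \circ \phi$ reduces to continuity of $\phi$.

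The final step is to show $\phi$ is continuous. Since $A \oplus M$ is finite as an $A$-module (hence as an $R$-algebra it is finitely generated with a Banach structure compatible with that of $A$), this continuity follows from the general automatic-continuity principle for $R$-algebra morphisms out of affinoid algebras. The main obstacle is that the standard formulation of that principle in \cite{BoschGuntzerRemmert84} concerns commutative affinoid targets, whereas $A \oplus M$ is non-commutative; the task is then to verify that the classical proof (closed graph theorem, density of polynomials in the Tate algebra, and Noetherianity of $A$) carries over without essential change to this setting. I expect no new idea to be required, but the argument must be formulated carefully to accommodate the non-commutative target.
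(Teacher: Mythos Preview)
Your dual-numbers reformulation is clean, and the identification of the twisted Leibniz rule with multiplicativity of $\phi(a)=(a,D(a))$ is correct. But the proposal leaves the decisive step unproved: the automatic-continuity theorem in \cite{BoschGuntzerRemmert84} is established for \emph{commutative} affinoid targets, and you explicitly defer the extension to your non-commutative $B=A\oplus M$ as a ``task.'' When one actually carries out that task, the argument collapses back into the paper's direct proof. Concretely, for a presentation $\pi:R\{\underline T\}\twoheadrightarrow A$ one has $\phi\circ\pi=(\pi,\,D\circ\pi)$, so boundedness of $\phi\circ\pi$ on the polynomial ring is \emph{equivalent} to boundedness of $\widetilde D:=D\circ\pi$ on polynomials---and this is precisely what the paper proves by induction on monomials using the twisted Leibniz rule. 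Likewise, showing that the abstract ring map agrees with its continuous extension from polynomials reduces to showing $\widetilde D=\widehat D_0$, which the paper handles by a Krull-intersection argument: for every maximal ideal $\mathfrak m$ of $R\{\underline T\}$ and every $n$, one writes $F=P+QG$ with $P\in R[\underline T]$ and $Q\in\mathfrak m^n\cap R[\underline T]$, and the Leibniz rule for the difference $E=\widetilde D-\widehat D_0$ gives $E(F)\in\mathfrak m^nM$.

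In short, the ring $B$ repackages the problem conceptually but does not shortcut it: the ``classical proof'' you propose to adapt, once specialised to this target, \emph{is} the paper's computation. A smaller side issue: your appeal to the preceding lemma for the existence of a contractive norm is not justified---that lemma is an equivalence, not an existence statement---though this is inessential, since continuity of multiplication on $B$ (which is all that is needed) holds for any Banach norm on $A$.
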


\begin{proof}
Let us fix a surjection $\pi : R\{\underline T\} \twoheadrightarrow A$ (we use the standard multiindex notation) and endow $A$ with the quotient norm.
We set $\widetilde D = D \circ \pi$ and $\widetilde \sigma = \sigma \circ \pi$.
If we denote by $D_{0}$ the restriction of $\widetilde D$ to the polynomial ring $R[\underline T]$, then we have $\|D_{0}\| \leq \|\widetilde \sigma\| \max_{i=1}^N \|D_{0}(T_{i})\| < +\infty$.
This is proved by induction on the total degree:
$$
\|D_{0}(\underline T^{\underline n}T_{i})\| = \|\pi(T_{i})D_{0}(\underline T^{\underline n}) + \widetilde \sigma(\underline T^{\underline n})D_{0}(T_{i})\| \leq  \max \{\|D_{0}(\underline T^{\underline n})\|, \|\widetilde \sigma\| \|D_{0}(T_{i})\| \}.
$$
It follows that $D_{0}$ is continuous and extends therefore uniquely to a continuous map $\widehat D_{0} : R\{\underline T\} \to D$.
We only have to show now that $\widetilde D = \widehat D_{0}$, or equivalently, if we set $E := \widetilde D - \widehat D_{0}$, that $E = 0$.
Recall that if $\mathfrak m$ is a maximal ideal of $R\{\underline T\}$ and $\mathfrak m_{0} := \mathfrak m \cap R[\underline T]$, then we have $\mathfrak m = \mathfrak m_{0}R\{\underline T\}$ and for all $n \in \mathbb N$, $R\{\underline T\} = R[\underline T] + \mathfrak m^n$.
Thus, if $F \in R\{\underline T\}$, we can write $F = P + QG$ with $P, Q \in R[\underline T]$ and $Q \in \mathfrak m^n$.
Therefore, since the restriction of $E$ to $R[\underline T]$ is zero, we see that
$$
E(F) = E(P) + \pi(Q)E(G) + \widetilde \sigma(G)E(Q) = \pi(Q)E(G).
$$
Thus, if we consider $M$ as an $R\{\underline T\}$-module via $\pi$, we have $E(F) \in \mathfrak m^nM$.
This being true for all $\mathfrak m$ and all $n$, and $M$ being finite over $R\{\underline T\}$, it implies that $E(F) = 0$.
 \end{proof}

We will denote by $\mathrm{Der}_{\sigma}(A,M)$ the submodule of all twisted derivations on $A$ with values in the finite $A$-module $M$ and   we will also write $\mathrm T_{A,\sigma} := \mathrm{Der}_{\sigma}(A,A)$.

One can build on this notion and define for a finite $A$-module $M$ the notion \emph{twisted derivation} $D$ of $M$: this is an $R$-linear map $D : M \to M$ satisfying the \emph{twisted Leibniz rule}: there exists a twisted derivation\footnote{A twisted derivation $D$ of $A$ is not uniquely determined by the twisted derivation of $M$: this only holds up to the annihilator of $M$.}
(that we also denote by $D$) of $A$ such that
\[
\forall z \in A, \forall s \in M, \quad D(zs) = D(z)s + \sigma(z)D(s).
\]
One can check as in proposition \ref{contprop} that such a twisted derivation is automatically continuous.
We will denote by $\mathrm{Der}_{\sigma}(M)$ the submodule of all twisted derivations of $M$.

We may also consider the ring of \emph{small twisted differential operators} $\overline {\mathrm D}_{ \sigma}$ which is the smallest subring of $\mathrm{End}_{R\mathrm{-cont}}(A) := \mathrm{Hom}_{R\mathrm{-cont}}(A,A)$ that contains both $A$ and $\mathrm{T}_{A,\sigma}$.
As an example, the endomorphism $\sigma$ itself is a small twisted differential operator (because $1 - \sigma$ is a twisted derivation).
If $M$ is a $\overline {\mathrm D}_{ \sigma}$-module, then it has a natural action by twisted derivations, but such an action does \emph{not} lift in general (curvature \emph{and} $p$-curvature phenomena).

\subsection{Linearization}

We will denote by $P := A \widehat \otimes_{R} A$ the completed tensor product of $A$ by itself and by $I$ the ideal of the multiplication map $P \to A$.
We will see $P$ as an $A$-module through the action on the \emph{left} and simply write $z := z \otimes 1$.
By contrast, we will write $\widetilde z = 1 \otimes z$ and call \emph{Taylor map} the morphism
 \[
 \theta : A \to P, \quad z \mapsto \widetilde z
 \]
that defines the action on the right.
Recall that $I$ is generated as an ideal by the image of the \emph{differentiation map}
 \[
 \mathrm d : A \to P, \quad z \mapsto \widetilde z - z
 \]
(which is also continuous).
We will also need below the \emph{comultiplication map}
 \[
 \xymatrix@R0cm{\delta : A \widehat \otimes_{R} A \ar[r] & A \widehat \otimes_{R} A \widehat \otimes_{R} A
 \\ z_{1} \otimes z_{2} \ar[r] &z_{1} \otimes 1 \otimes z_{2}}
 \]
that will be seen as a map $\delta : P \to P \widehat \otimes'_{A} P$ (the $\otimes'$ means that we  use the \emph{right} action on the left factor - we will keep this convention throughout).

If $M, N$ are two finite $A$-modules, then $\mathrm{Hom}_{R\mathrm{-cont}}(M, N)$ is naturally a $P$-module for $z_{1}\widetilde z_{2} \cdot \varphi = z_{1} \circ \varphi \circ z_{2}$ (where $z_{i}$ stands here for multiplication by $z_{i}$).
One can remark that $\mathrm d z \cdot \varphi = [\varphi, z]$ is then the commutator.
Moreover, there exists a \emph{linearization} isomorphism
\[
\mathrm{Hom}_{R\mathrm{-cont}}(M, N) \simeq \mathrm{Hom}_{A\mathrm{-cont}}(P \otimes'_{A} M, N), \quad \varphi \mapsto \widetilde \varphi
\]
with the relations
\[
\varphi(s) = \widetilde \varphi(1 \otimes' s) \quad \mathrm{and} \quad \widetilde \varphi (f \otimes' s) = (f \cdot \varphi)(s).
\]
If $\varphi : M \to N$ and $\psi : L \to M$ are two continuous $R$-linear maps between finite $A$-modules, then we have
\[
\xymatrix{\widetilde {\varphi \circ \psi}:  & P \otimes'_{A} L \ar[r]^-{\delta} & P \widehat \otimes'_{A} P  \otimes'_{A} L \ar[r]^-{\mathrm{Id} \otimes \widetilde \psi} & P \otimes'_{A} M \ar[r]^-{ \widetilde \varphi} & N.
}
\]

\subsection{Twisted principal parts}

We extend $\sigma$ to $P$ by setting $\sigma_{P} := \sigma_{A} \widehat\otimes_{R} \mathrm{Id}_{A}$.
Then, we consider for $n \in \mathbb N$, the \emph{twisted product}
 \[
 I^{(n)} := I\sigma(I)\cdots \sigma^{n-1}(I)
 \]
 (as ideals) and the \emph{ring of twisted principal parts of order $n$}: $P_{(n)} := P/I^{(n+1)}$
 (we will write $(n)_{\sigma}$ in place of $(n)$ if we want to make clear the dependence on $\sigma$ and remove the parenthesis when $\sigma = \mathrm{Id}_{A}$).
We also introduce the \emph{module of twisted differential forms} $\Omega^1_{\sigma} = I/I^{(2)}$ and the \emph{ring of (all) twisted principal parts} $\widehat P_{\sigma} := \varprojlim P_{(n)}$.

\begin{prop} \label{finit}
If $A$ is a twisted affinoid $R$-algebra, then $P_{A/R,(n)_{\sigma}}$ is a finite $A$-module (for the left structure).
If we assume that $\sigma$ is finite, then $P_{A/R,(n)_{\sigma}}$ is also a finite $A$-module for the right structure.
\end{prop}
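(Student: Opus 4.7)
The plan is to pick a continuous surjection $\pi\colon R\{T_1,\ldots,T_N\}\twoheadrightarrow A$, set $z_i:=\pi(T_i)$, and use the induced generators $\xi_i:=\mathrm{d}z_i=\widetilde{z_i}-z_i$ of $I$ to write down explicit generators of every $\sigma^k(I)$ and hence of $I^{(n+1)}$. The left-finiteness of $P_{(n)}=P/I^{(n+1)}$ will then follow by a direct degree-reduction argument, while the right-finiteness will require the extra hypothesis that $\sigma$ is finite.

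First I would note that since $\mathrm d\colon A\to P$ is an $R$-derivation generating $I$ as a closed ideal, the elements $\xi_1,\ldots,\xi_N$ topologically generate $I$. A direct calculation yields
\[
\sigma_P^k(\xi_i)=\xi_i+\alpha_i^{(k)},\qquad \alpha_i^{(k)}:=z_i-\sigma^k(z_i)\in A,
\]
so $\sigma^k(I)$ is closed-generated by the elements $\xi_i+\alpha_i^{(k)}$. Consequently $I^{(n+1)}$ is topologically generated by the products
\[
\prod_{k=0}^n\bigl(\xi_{j_k}+\alpha_{j_k}^{(k)}\bigr),\qquad (j_0,\ldots,j_n)\in\{1,\ldots,N\}^{n+1},
\]
and expanding such a product separates the top monomial $\prod_k\xi_{j_k}$ from a sum of $\xi$-monomials of strictly smaller total degree, whose coefficients (products of the $\alpha_i^{(k)}$) lie in $A$ acting on the left. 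Reading this identity modulo $I^{(n+1)}$ expresses every degree-$(n{+}1)$ monomial $\xi^{\underline m}$ as a left-$A$-linear combination of monomials of degree $\leq n$; iterating on total degree extends this to every monomial of degree $\geq n+1$. Since $P$ is topologically generated as a left $A$-module by the $\xi^{\underline m}$ (using $P=A\widehat\otimes_R A$ together with $\pi$), this shows that $P_{(n)}$ is generated over left $A$ by the finitely many monomials of degree $\leq n$, giving left-finiteness.

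For the right structure, I would try to run the analogous reduction from the other side, using generators of the ``right-twisted'' ideals $\tau(\sigma^k(I))$ coming from the swap $\tau\colon P\to P$ and $\widetilde\sigma_P:=\mathrm{id}\,\widehat\otimes\,\sigma$; explicitly $\widetilde\sigma_P^k(\xi_i)=\widetilde{\sigma^k(z_i)}-z_i=\widetilde{\alpha_i^{(k)}}+\mathrm{d}(\sigma^k(z_i))$, which is no longer of the clean form ``$\xi_i$ plus an element of $A$''. This is the point where the finiteness of $\sigma$ enters: it supplies a finite family $e_1,\ldots,e_M\in A$ with $A=\sum_l\sigma(A)e_l$, which lets one rewrite the reductions with coefficients in the right-embedded subring $\widetilde A$ at the cost of the finitely many extra generators $e_l$, and conclude that $P_{(n)}$ is also finite as a right $A$-module.

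The main obstacle is precisely this left--right asymmetry introduced by the twist. Because $\sigma_P=\sigma\,\widehat\otimes\,\mathrm{id}$ acts only on the left factor, the ideal $I^{(n+1)}$ is not preserved by the swap (one only has $\tau\circ\sigma_P=\widetilde\sigma_P\circ\tau$), and the left-finiteness of $P_{(n)}$ does not transport to right-finiteness for free as it does when $\sigma=\mathrm{id}_A$. The hypothesis ``$\sigma$ finite'' is exactly what is needed to compensate for this broken symmetry.
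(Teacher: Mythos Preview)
Your left-finiteness argument is sound and takes a different route from the paper. The paper does not write down explicit monomial generators; instead it uses the short exact sequence $0\to\sigma^n(I)\to P\to A\to 0$ (with $z_1\widetilde{z_2}\mapsto z_1\sigma^n(z_2)$) to identify $P/\sigma^n(I)\simeq A$, observes that $I^{(n)}/I^{(n+1)}=I^{(n)}/I^{(n)}\sigma^n(I)$ is then a module over $A$ and is finitely generated because $I^{(n)}$ is a finitely generated ideal of the Noetherian ring $P$, and concludes by induction on $n$ via the extension $0\to I^{(n)}/I^{(n+1)}\to P_{(n)}\to P_{(n-1)}\to 0$. Your approach is more explicit (it actually names generators), but you skate over one point: from ``the $\xi^{\underline m}$ topologically generate $P$'' and ``high-degree monomials reduce mod $I^{(n+1)}$'' you only get that the low-degree monomials have \emph{dense} $A$-span in $P_{(n)}$. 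To close this you should note that each $\xi_i$ is integral over left-$A$ in $P_{(n)}$ (via the monic relation $\prod_{k=0}^n(\xi_i+\alpha_i^{(k)})=0$), so the affinoid $A$-algebra $P_{(n)}$ is module-finite over $A$. The paper's filtration argument avoids this density issue entirely.

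For the right structure there is a genuine gap. Passing to $\tau$ only exchanges the problem for the left-finiteness of $P/\tau(I^{(n+1)})$, which is a \emph{different} quotient and has the same asymmetry in the opposite direction; your formula $\widetilde\sigma_P^{\,k}(\xi_i)=\widetilde{\alpha_i^{(k)}}+\mathrm d(\sigma^k(z_i))$ is also incorrect (one gets $\widetilde\sigma_P^{\,k}(\xi_i)=\xi_i-\widetilde{\alpha_i^{(k)}}$, so the coefficients lie in the \emph{right} copy of $A$, which after applying $\tau$ puts you back where you started). The sentence about using $A=\sum_l\sigma(A)e_l$ to ``rewrite the reductions with coefficients in $\widetilde A$'' is not an argument. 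The missing idea, and what the paper actually does, is to run the same filtration argument but read the exact sequence as a sequence of \emph{right} $A$-modules: under $P/\sigma^n(I)\simeq A$ the right action of $z\in A$ becomes multiplication by $\sigma^n(z)$, so $I^{(n)}/I^{(n+1)}$ is finite as an $A$-module with $A$ acting through $\sigma^n$; finiteness of $\sigma$ (hence of $\sigma^n$) then makes it finite for the genuine right action, and induction finishes. This is precisely the mechanism by which ``$\sigma$ finite'' enters, and it does not emerge from the swap-and-reduce strategy you outline.
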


\begin{proof} 
One first checks that there exists, for each $n \in \mathbb N$, a short exact sequence
\begin{equation} \label{sigmn}
\xymatrix@R0cm{0 \ar[r]& \sigma^n(I) \ar[r]& P \ar[r]& A \ar[r]& 0 \\ && z_{1}\widetilde z_{2} \ar[r] & z_{1} \sigma^n(z_{2})}.
\end{equation}
Since $I^{(n)}$ is finitely generated as a $P$-module, it follows that $I^{(n)}/I^{(n+1)} := I^{(n)}/I^{(n)}\sigma^n(I)$ is finitely generated as an $A$-module.
We may then proceed by induction on $n$.
If we consider now the action of $A$ on the right on $P$, then we need to endow $A$ with its $A$-module structure via $\sigma^n$ in order to keep $A$-linearity in the exact sequence \eqref{sigmn}.
We may then proceed in the same way in order to show that $P_{(n)}$ is also finite for its right structure as long as $\sigma$ itself is finite.
 \end{proof}

Since we need it below, we also want to mention that we always have
\[
\sigma^n(\mathrm dz) \cdot \varphi = [\varphi, z]_{\sigma^n} := \varphi \circ z - \sigma^n(z) \circ \varphi
\]
so that $\|\sigma^n(\mathrm dz) \cdot \varphi\| \leq \| \varphi \|\|z \|$ when $\sigma$ is contractive.

\subsection{Twisted connections}

It follows from proposition \ref{finit} that $\Omega^1_{\sigma}$ is a finite $A$-module.
The (continuous) differentiation map $\mathrm d : A \to P$ takes values inside $I$ and we will still denote by $\mathrm d : A \to \Omega^1_{\sigma}$ the composition with the projection.
This is the universal continuous twisted derivation in the sense that it induces an isomorphism
\[
\mathrm{Hom}_{A}(\Omega^1_{\sigma}, M) \simeq \mathrm{Der}_{\sigma}(A,M)
\]
for any finite $A$-module $M$.

A \emph{twisted connection} on a finite $A$-module $M$ is an $R$-linear map $\nabla : M \to M \otimes_{A} \Omega^1_{\sigma}$ that satisfies the twisted Leibniz rule
\[
\forall z \in A, \forall s \in M, \quad \nabla(zs) = s \otimes d(z) + \sigma(z)\nabla(s).
\]
A twisted connection is automatically continuous because it is induced by a $P_{(1)}$-linear map $\epsilon : P_{(1)} \otimes'_{A} M \to M \otimes_{A} P_{(1)}$ in the sense that $\epsilon(1 \otimes' s) = s \otimes 1 + \nabla(s)$.
Finite $A$-modules endowed with a twisted connection form a category $\nabla_{\sigma}\mathrm{-Mod}(A/R)$ (with \emph{horizontal maps} as morphisms).
Any twisted connection on an $A$-module $M$ gives rise to a linear action by twisted derivations.
Moreover, if $\Omega^1_{\sigma}$ is a flat $A$-module, then we obtain an equivalence  that preserves cohomology.
This is the case in practice, and we will use the vocabulary of $\nabla_{\sigma}$-modules, which is more pleasant.

\subsection{Twisted differential operators}

Let $M, N$ be two finite $A$-modules.
A \emph{twisted differential operator (of infinite radius) of degree at most $n$} is an $R$-linear map $\varphi : M \to N$ whose $A$-linearization $\widetilde \varphi : P \otimes'_{A} M\to N$  factors through $P_{(n)} \otimes'_{A} M$.
Since $P_{(n)}$ is a finite $A$-module, a twisted differential operator is automatically continuous.
We will then denote the induced map by $\widetilde \varphi_{n} : P_{(n)} \otimes'_{A} M \to N$.
We obtain for each $n \in \mathbb N$, a finite $A$-module
\[
\mathrm{Diff}^{(\infty)}_{\sigma,n}(M, N) \simeq \mathrm{Hom}_{A}(P_{(n)} \otimes'_{A} M, N)
\]
and we will write
 \[
 \mathrm{Diff}^{(\infty)}_{\sigma}(M, N) = \cup_{n \in \mathbb N} \mathrm{Diff}^{(\infty)}_{\sigma,n}(M, N).
 \]
Note that, by definition, the action of $P$ on $\mathrm{Hom}_{R\mathrm{-cont}}(M, N)$ induces an action of $\widehat P_{\sigma}$ on $\mathrm{Diff}^{(\infty)}_{\sigma}(M, N)$.
Actually, a continuous $R$-linear map $\varphi$ falls into $\mathrm{Diff}^{(\infty)}_{\sigma,n}(M, N)$ if and only if $\sigma^n(I) \cdot \varphi \subset \mathrm{Diff}^{(\infty)}_{\sigma,n-1}(M, N)$.
Twisted differential operators are stable under composition and this turns $\mathrm D_{\sigma}^{(\infty)} := \mathrm{Diff}^{(\infty)}_{\sigma}(A, A)$ into a subring of $\mathrm{End}_{R\mathrm{-cont}}(A)$ containing $\overline {\mathrm D}_{\sigma}$.
In particular, any $\mathrm D_{\sigma}^{(\infty)}$-module gives rise to a $\overline {\mathrm D}_{\sigma}$-module.
But again, this is not an equivalence in general even for finite $A$-modules ($p$-curvature problem).
Note also that the action of $\mathrm D_{\sigma}^{(\infty)}$ on an $A$-finite $\mathrm D_{\sigma}^{(\infty)}$-module $M$ induces a continuous $\widehat P_{\sigma}$-linear ring homomorphism 
 \[
 \mathrm D_{\sigma}^{(\infty)} \to \mathrm{Diff}^{(\infty)}_{\sigma}(M,M).
 \]
 
\subsection{Twisted Taylor structures}

One can show that the comultiplication map $\delta : P \to P \widehat \otimes'_{A} P$ induces for all $m,n \in \mathbb N$ a comultiplication map
\[
\delta_{(m),(n)} : P_{(m+n)} \to P_{(m)} \otimes'_{A} P_{(n)}.
\] 
A \emph{twisted Taylor structure} on a finite $A$-module $M$ is a family of compatible \emph{twisted Taylor maps} $\theta_{n} : M \to M \otimes_{A} P_{(n)}$.
More precisely, we require for $\theta_{n}$ to be $A$-linear when the target is endowed with the $A$-module structure coming from the \emph{right} structure of $P_{(n)}$, and compatibility means that
\[
\forall m, n \in \mathbb N, \quad (\theta_{n} \otimes \mathrm{Id}_{P_{(m)}}) \circ \theta_{m} = (\mathrm{Id}_{M} \otimes \delta_{(n),(m)}) \circ \theta_{m+n}.
\]
We shall then write
\[
\widehat \theta := \varprojlim \theta_{n} : M \mapsto M \otimes_{A} \widehat P_{\sigma}
\]
and call $\widehat \theta(s)$ the \emph{twisted Taylor series} of $s \in M$.
A twisted Taylor structure on $M$ induces a $D^{(\infty)}_{\sigma}$-module structure via
\[
\varphi s = \sum_{k=0}^{n} \widetilde \varphi(f_{k}) s_{k} \quad \mathrm{if} \quad \theta_{n}(s) = \sum_{k=0}^{n} s_{k} \otimes f_{k},
\]
when the twisted differential operator $\varphi$ has order $n$.
Moreover, we obtain an equivalence of categories when all $P_{(n)}$'s are flat $A$-modules (see section 5 of \cite{LeStumQuiros18}).

\subsection{Base change}

We will need below the following result:

\begin{prop} \label{extn}
If $A \to B$ is any morphism of twisted affinoid $R$-algebras, then there exists for each $n \in \mathbb N$, a natural morphism
\[
B \otimes_{A} P_{A/R,(n)_{\sigma}} \to P_{B/R,(n)_{\sigma}}.
\]
When $B = A\{\frac {f_{1}}{f_{0}}, \ldots, \frac {f_{r}}{f_{0}}\}$, this is an isomorphism.
\end{prop}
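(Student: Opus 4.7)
My plan is to build the natural map by functoriality of $\widehat\otimes_R$, and then in the Laurent case produce an explicit inverse by extending the right $A$-structure of $P_{A/R,(n)_\sigma}$ to a right $B$-structure after tensoring with $B$ on the left. The first step is essentially formal, while the second has one genuine subtlety specific to the twisted setting.

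For the first step: the morphism $A \to B$ of twisted affinoid $R$-algebras induces by functoriality a continuous $R$-algebra homomorphism $P_{A/R} \to P_{B/R}$ that intertwines $\sigma_P = \sigma \widehat\otimes \mathrm{Id}$ and sends $I_{A/R}$ into $I_{B/R}$. Applying $\sigma^k$ gives $\sigma^k(I_{A/R})$ into $\sigma^k(I_{B/R})$, so $I_{A/R}^{(n+1)}$ is sent into $I_{B/R}^{(n+1)}$. Passing to quotients gives a map $P_{A/R,(n)_\sigma} \to P_{B/R,(n)_\sigma}$ that is $A$-linear for the left structures, and extending scalars produces the natural morphism of the statement.

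For the Laurent case, set $Q := B \otimes_A P_{A/R,(n)_\sigma}$, a commutative $B$-algebra with a right $A$-action $a \mapsto \widetilde a$. I would extend this right action to $B$ via $\rho(f_i/f_0) := \widetilde{f_i}\widetilde{f_0}^{-1}$, using the universal property of $B = A\{T_1,\ldots,T_r\}/(f_0 T_i - f_i)$. The crux---and the only place where the twisted setting diverges from the classical one---is proving that $\widetilde{f_0}$ is a unit in $Q$. In the untwisted case this is immediate from $df_0 = \widetilde{f_0} - f_0$ being nilpotent modulo $I^{n+1}$, but $df_0$ is \emph{not} nilpotent modulo $I^{(n+1)}_\sigma$. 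Instead, I would exploit $\widetilde{f_0} - \sigma^k(f_0) = \sigma^k(df_0) \in \sigma^k(I_{A/R})$ to obtain
\[
\prod_{k=0}^n \bigl(\widetilde{f_0} - \sigma^k(f_0)\bigr) \in I_{A/R}\sigma(I_{A/R})\cdots\sigma^n(I_{A/R}) = I_{A/R}^{(n+1)},
\]
zero in $Q$. Expanding, $\widetilde{f_0}$ satisfies a monic relation of degree $n+1$ with constant term $\pm\prod_{k=0}^n\sigma^k(f_0)$, a unit in $B$ (each $\sigma^k(f_0) = \sigma_B^k(f_0)$ lies in $B^\times$, since $f_0$ does and $\sigma_B$ preserves units). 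Rearranging writes down $\widetilde{f_0}^{-1}$ as a polynomial in $\widetilde{f_0}$.

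With $\rho$ in hand, combining it with the left multiplication gives a continuous $R$-algebra map $B \widehat\otimes_R B \to Q$ (with commuting images in the commutative ring $Q$). A parallel dévissage applied to each generator $\widetilde b - \sigma^k(b)$ of $\sigma^k(I_{B/R})$ (for $b \in A$ and $b = f_i/f_0$) shows the map sends the image of $\sigma^k(I_{B/R})$ into the $Q$-ideal generated by the image of $\sigma^k(I_{A/R})$; the product then lies in $I_{A/R}^{(n+1)} \cdot Q = 0$, so the map factors through $P_{B/R,(n)_\sigma} \to Q$. That this inverts the natural morphism can be verified on elementary tensors $b_1 \widehat\otimes b_2$ with $b_2 \in A$ or $b_2 = f_i/f_0$ and extended by continuity.
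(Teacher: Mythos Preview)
Your argument is correct and takes a genuinely different route from the paper.  The paper's proof is geometric: it passes to the rigid analytic variety $X = \mathrm{Sp}(A)$, defines the coherent sheaf $\mathcal P_{X,(n)}$ of twisted principal parts, observes that for the open immersion $j : U \hookrightarrow X$ corresponding to the rational localization one has $j^{-1}\mathcal P_{X,(n)} = \mathcal P_{U,(n)}$, and then takes global sections.  The details of why this sheaf restricts correctly are left to the reader.  By contrast, you construct an explicit algebraic inverse, and the heart of your argument---the identity $\prod_{k=0}^{n}(\widetilde{f_0} - \sigma^k(f_0)) = 0$ in $Q$, forcing $\widetilde{f_0}$ to be a unit because the constant term $\prod_k \sigma_B^k(f_0)$ lies in $B^\times$---is exactly the twisted replacement for the classical ``$df_0$ nilpotent'' trick.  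This is a nice observation and makes the proof entirely self-contained, with no appeal to rigid geometry.

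Two small points you should make explicit.  First, to invoke the universal property of $B = A\{T_1,\ldots,T_r\}/(f_0 T_i - f_i)$ you need the proposed images $\widetilde{f_i}\widetilde{f_0}^{-1}$ to be power-bounded in $Q$; this holds because $Q$ is a finite $B$-module (via the left structure), hence every element of $Q$ is integral over $B$ and therefore has bounded powers.  Second, in your d\'evissage you use that the ideal $J_k \subset Q$ generated by the image of $\sigma^k(I_{A/R})$ is closed, so that the continuous $\sigma^k$-twisted derivation $b \mapsto \rho(b) - \sigma_B^k(b)$ lands in $J_k$ once it does on the dense subalgebra $A[f_i/f_0]$; this is fine since $J_k$ is a $B$-submodule of the finite $B$-module $Q$.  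With those two remarks your proof is complete.  The trade-off: the paper's sheaf argument is shorter to state and makes the naturality transparent, while yours is more elementary, gives an explicit formula for $\widetilde{f_0}^{-1}$, and avoids setting up $\mathcal P_{X,(n)}$ as a sheaf.
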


\begin{proof}
The existence of the morphism is obtained by functoriality and the second assertion may be seen as a consequence of the geometric nature of the construction.
More precisely, one may consider the rigid analytic variety $X$ associated to $A$ as well as the endomorphism $s$ of $X$ corresponding to $\sigma$.
If $\mathcal I_{X}$ denotes the ideal of the diagonal embedding $\iota_{X} : X \hookrightarrow X \times_{R} X$, one sets $\mathcal I_{X}^{(n)} := \mathcal I_{X}s^{*}(\mathcal I_{X})\cdots s^{n-1*}(\mathcal I_{X})$ and $\mathcal P_{X,(n)} = \iota_{X}^{-1}(\mathcal O_{X \times_{R} X}/\mathcal I_{X}^{(n+1)})$.
Recall that $A\{\frac {f_{1}}{f_{0}}, \ldots, \frac {f_{r}}{f_{0}}\}$ is the affinoid algebra associated to the open subset defined by $|f_{1}(x)|, \ldots, |f_{r}(x)| \leq |f_{0}(x)|$.
If we let $j : U \hookrightarrow X$ denote the inclusion map, we will have $j^{-1}\mathcal P_{X,(n)} = \mathcal P_{U,(n)}$ and taking global sections provides the expected isomorphism.
Details are left to the reader.
\end{proof}

In the situation of proposition \ref{extn}, there exists a natural morphism
\[
B \otimes_{A} \Omega^1_{A/R,\sigma} \to \Omega^1_{B/R,\sigma}
\]
which is an isomorphism when $B = A\{\frac {f_{1}}{f_{0}}, \ldots, \frac {f_{1}}{f_{0}}\}$.
In this last case, we also have an isomorphism
\[
B \otimes_{A} D^{(\infty)}_{A/R,\sigma} \simeq D^{(\infty)}_{B/R,\sigma}.
\]

\subsection{Twisted coordinate}

We call $x \in A$ a \emph{twisted coordinate} if all the maps
\begin{equation} \label{basdf}
\xymatrix@R0cm{A[\xi]_{\leq n} \ar[r] & P_{(n)} \\ \xi \ar@{|->}[r] & \overline {\widetilde x - x}}
\end{equation}
(where the left hand side denotes the set of polynomials of degree at most $n$) are bijective.
We call $x$ a \emph{quantum coordinate} if, moreover, there exists $q,h \in R$ such that $\sigma(x) = qx + h$.
We will say \emph{$q$-coordinate} when we want to make explicit the value of $q$.

\begin{xmps}
\begin{enumerate}
\item
If $A$ is formally smooth over $R$ and $\sigma = \mathrm{Id}_{A}$, then a quantum coordinate is the same thing as an \'etale coordinate.
\item
It follows from proposition \ref{extn} that if $A$ is the ring of functions on an affinoid domain $X$ of the $R$-line, $x$ is a (usual) coordinate on the $R$-line, and $\sigma(x)$ is a convergent power series on $X$, then $x$ is a twisted coordinate on $A$ (but we still have to check that $X$ is stable under $\sigma$).
In the particular case $\sigma(x) = qx + h$, this is a quantum coordinate.
\item
This applies to a \emph{standard affinoid domain} as in \cite{Pulita08}: a union of subsets of the form
\begin{equation} \label{pulaf}
X := \mathbb D^+(c,r) \setminus \left( \cup_{i=1}^h \mathbb D^-(c_{i}, r_{i}) \right).
\end{equation}
In other words, $A$ will be a \emph{standard affinoid algebra}: a product of rings of the form
\[
R\left\{\frac {x-c}r, \frac {r_{1}}{x-c_{1}}, \ldots, \frac {r_{h}}{x-c_{h}}\right\}.
\]
When $R = K$ is an algebraically closed complete ultrametric field, any affinoid domain of the line is standard.
\item The simplest non trivial case is an annulus centered at the origin
\[
X := \mathbb D^+(0,r) \setminus \mathbb D^-(0, r_{1})
\]
and we fall back onto our above example $A := R\{x/r, r_{1}/x\}$.
\end{enumerate}
\end{xmps}

Alternatively, one can show that $x$ is a twisted coordinate on $A$ if and only if there exists canonical isomorphisms of $A$-algebras
\begin{equation} \label{dfiso}
A[\xi]/ \xi^{(n)} \simeq P_{(n)} \quad \mathrm{with} \quad \xi^{(n)} := \prod_{i=0}^{n-1} (\xi + x - \sigma^n(x)).
\end{equation}
When $x$ is a twisted coordinate on $A$, then $\Omega^1_{\sigma}$ is free of dimension one generated by the image of $\xi$ and there exists therefore a unique twisted derivation $\partial_{A,\sigma}$ on $A$ such that $\partial_{A,\sigma}(x) = 1$.
In particular, an action by twisted derivations on a finite $A$-module $M$ is simply given by an $R$-linear endomorphism $\partial_{M,\sigma}$ satisfying
\[
\forall z \in A, \forall s \in M, \quad \partial_{M,\sigma}(zs) = \partial_{A,\sigma}(z)s + \sigma(z)\partial_{M,\sigma}(s).
\]

\subsection{Twisted Weyl algebra}

When $x$ is a twisted coordinate, the \emph{Ore extension} $\mathrm D_{\sigma}$ of $A$ by $\sigma$ and $\partial_{A,\sigma}$ is called the \emph{twisted Weyl algebra} of $A$:
this is the free $A$-module (of infinite rank) with formal basis $\partial_{\sigma}^{k}$ and commutation rule
\[
\partial_{\sigma} \circ z = \partial_{A,\sigma}(z) + \sigma(z)\partial_{\sigma}.
\]
Note that it depends on $x$ because $\partial_{A,\sigma}$ does.
One easily sees that here exists an equivalence of categories
\[
\nabla_{\sigma}\mathrm{-Mod}(A/R) \simeq A\mathrm{-finite}\ \mathrm D_{A/R,\sigma}\mathrm{-Mod}.
\]
Moreover, it induces an isomorphism:
\[
\mathrm H^*_{\partial_{\sigma}}(M) := \mathrm H^*[M \overset {\partial_{\sigma}} \longrightarrow M] \simeq \mathrm{Ext}^*_{D_{\sigma}}(A, M).
\]

On the other hand, there exists a canonical map
\begin{equation} \label{strong}
A[T]_{\sigma} \to \mathrm D_{\sigma}, \quad T \mapsto 1 - (x - \sigma(x))\partial_{\sigma}.
\end{equation}
We call $x$ \emph{strong} when $x - \sigma(x) \in A^\times$ (note that, this cannot happen when $\sigma = \mathrm{Id}_{A}$).
If this is the case, then the morphism \eqref{strong} is an isomorphism.
It follows that, in general, there exists a functor
\[
\nabla_{\sigma}\mathrm{-Mod}(A/R) \to \sigma\mathrm{-Mod}(A/R)
\]
ant that, when $x$ is strong, this is an equivalence providing an isomorphism
\[
H^*_{\sigma}(M) = H^*_{\partial_{\sigma}}(M).
\]
To summarize, when $\sigma \neq \mathrm{Id}_{A}$, there is essentially no difference between an $A[T]_{\sigma}$-module, a twisted module, a module with an action by twisted derivations, a module with a twisted connection or a $D_{\sigma}$-module (and these identifications are compatible with cohomology).
We will see later that, with some extra conditions, we can add $D^{(\infty)}_{\sigma}$-modules, and even what we will call $D^{(\eta)}_{\sigma}$-modules, to the list.

\subsection{Standard twisted differential operators}

When $x$ is a twisted coordinate on $A$, we have
\begin{equation} \label{duald}
\widehat P_{\sigma} \simeq A[[\xi]]_{\sigma} := \varprojlim A[\xi]/\xi^{(n)} \quad \mathrm{and} \quad \mathrm D_{\sigma}^{(\infty)} \simeq \varinjlim_{n} \mathrm{Hom}_{A}(A[\xi]/\xi^{(n)}, A).
\end{equation}
We will denote by $\{\partial_{\sigma}^{[k]}\}_{k \in \mathbb N}$ the dual basis to $\{\xi^{(k)}\}_{k \in \mathbb N}$ and call $\partial_{\sigma}^{[k]}$ the \emph{standard twisted differential operator} of order $k$.
Note that the action of $\widehat P_{\sigma}$ on $\mathrm D_{\sigma}^{(\infty)}$ composed with evaluation at $1$ gives back the paring of $A$-modules
\[
\xymatrix@R0cm{\widehat P_{\sigma} \times  \mathrm D_{\sigma}^{(\infty)} \ar[r] & \mathrm D^{(\infty)}_{\sigma} \ar[r] &  A
\\ (\xi^{(n)},\partial_{\sigma}^{[k]}) \ar@{|->}[rr] && 1 \ \mathrm{if} \ n = k \ \mathrm{and} \ 0 \ \mathrm{otherwise}.
}
\]
One can also prove explicit identities such as
\begin{equation} \label{dkx}
\forall k \in \mathbb N \setminus\{0\}, \quad \partial_{\sigma}^{[k]} \circ x = \sigma(x) \partial_{\sigma}^{[k]} + \partial_{\sigma}^{[k-1]},
\end{equation}
from which we can derive
\begin{equation} \label{actdk}
\forall k \in \mathbb N \setminus\{0\}, \quad \xi \cdot \partial_{\sigma}^{[k]} = \partial_{\sigma}^{[k-1]} -(x -\sigma(x)) \partial_{\sigma}^{[k]}.
\end{equation}

Also, if $M$ is an $A$-finite $\mathrm D_{\sigma}^{(\infty)}$-module and $s \in M$, then its twisted Taylor series will be given by
\[
\widehat \theta(s) = \sum_{k=0}^{\infty} \partial^{[k]}_{\sigma}(s) \otimes \xi^{(k)} \in M \otimes_{A} \widehat P_{\sigma},
\]
providing an a posteriori justification for the terminology.

 When $x$ is actually a  $q$-coordinate, there exists an epi-mono factorization
\[
\mathrm D_{\sigma} \twoheadrightarrow \overline {\mathrm D}_{\sigma} \hookrightarrow \mathrm D^{(\infty)}_{\sigma}
\]
sending $\partial_{\sigma}^{k}$ to $(k)_{q}!\partial_{\sigma}^{[k]}$.
When all positive $q$-integers are invertible (for example if $q \in K$ is not a root of unity), then we get equalities $D_{\sigma} = \overline D_{\sigma} = D_{\sigma}^{(\infty)}$.
In particular, in this situation, we obtain a sequence of equivalences
\[
\nabla_{\sigma}\mathrm{-Mod}(A/R) \simeq A\mathrm{-finite}\ \mathrm D_{A/R,\sigma}\mathrm{-Mod} \simeq A\mathrm{-finite}\ \mathrm D^{(\infty)}_{A/R,\sigma}\mathrm{-Mod}
\]
inducing isomorphisms
\[
H^*_{\partial_{\sigma}}(M) = \mathrm{Ext}_{\mathrm D_{\sigma}}^*(A, M) = \mathrm{Ext}_{\mathrm D_{\sigma}^{(\infty)}}^*(A, M).
\]
When $q$ \emph{is} a root of unity, this is not true anymore.

\subsection{Formal density}

When $x$ is a twisted coordinate, we denote by $K^{[k]}$ the submodule (freely) generated by all $\partial^{[i]}_{\sigma}$ for $i \geq k$, and we consider the completion
\[
\widehat {\mathrm D}_{\sigma}^{(\infty)} =: \varprojlim \mathrm D_{\sigma}^{(\infty)}/K^{[k]}
\]
which is \emph{not} a ring in general.
Note however that formula \eqref{actdk} turns $\widehat {\mathrm D}_{\sigma}^{(\infty)}$ into an $A[\xi]$-module.
The \emph{formal density lemma} tells us that the last isomorphism of \eqref{duald} extends to
\begin{equation} \label{denslem}
\xymatrix@R0cm{\widehat {\mathrm D}_{\sigma}^{(\infty)} \ar[r]^-{\simeq} & \mathrm{Hom}_{A}(A[\xi], A)}
\end{equation}
where the right hand side does \emph{not} depend on $\sigma$.
From this result, one can derive the \emph{formal deformation map} and the \emph{formal confluence theorems} as in \cite{LeStumQuiros18} but this is not what we are interested in here.

Recall that we have
\[
\mathrm D_{\sigma}^{(\infty)} := \left\{\sum_{0 \leq k << \infty} z_{k} \partial_{\sigma}^{[k]}, \quad z_{k} \in A \right\} \subset \widehat {\mathrm D}_{\sigma}^{(\infty)} := \left\{\sum_{k=0}^\infty z_{k} \partial_{\sigma}^{[k]}, \quad z_{k} \in A \right\}.
\]
Our goal is to use the topology of $A$ in order to describe some $A$-modules that lie in between, that have a ring structure (such as $\mathrm D_{\sigma}^{(\infty)}$) and, at the same time, are essentially independent of $\sigma$ (such as $\widehat {\mathrm D}_{\sigma}^{(\infty)}$).

\subsection{Quantum analogs}
 
When $x$ is a $q$-coordinate, we have
$$
\forall k,l \in \mathbb N, \quad \partial_{\sigma}^{[k]} \circ \partial_{\sigma}^{[l]} = {k+l \choose l}_{q} \partial_{\sigma}^{[k+l]}.
$$
More generally, $q$-analogs will appear systematically in formulas.

\begin{xmp}
When $\sigma(x) = qx$, we have 
\begin{equation} \label{dkx}
\partial^{[k]}_{\sigma}(x^n) = \left\{ \begin{array}{l} {n \choose k}_{q} x^{n-k} \ \mathrm{for}\ k \leq n \\ 0\ \mathrm{otherwise}. \end{array}\right.
\end{equation}
\end{xmp}

\begin{prop} \label{qest}
If $q \in R$, we have
\[
\left\|{n \choose k}_{q} \right\|\leq \max\left\{1, \|q\|^{k(n-1)}\right\}.
\]
\end{prop}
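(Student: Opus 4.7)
The natural strategy is a straightforward induction on $n$, exploiting the quantum Pascal identity together with the ultrametric triangle inequality. The base case $n=0$ is immediate from the initial conditions: $\|\binom{0}{0}_{q}\| = 1$ and $\binom{0}{k}_{q}=0$ for $k \geq 1$. Likewise the case $k=0$ for $n \geq 1$ is trivial since $\binom{n}{0}_{q}=1$.

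For the inductive step with $n \geq 1$ and $k \geq 1$, I would apply the quantum Pascal identity
\[
\binom{n}{k}_{q} = \binom{n-1}{k-1}_{q} + q^{k}\binom{n-1}{k}_{q}
\]
and use ultrametricity to obtain
\[
\left\|\binom{n}{k}_{q}\right\| \leq \max\left\{\left\|\binom{n-1}{k-1}_{q}\right\|,\ \|q\|^{k} \left\|\binom{n-1}{k}_{q}\right\|\right\}.
\]
Feeding the two induction hypotheses into this reduces the problem to verifying
\[
\max\left\{1,\ \|q\|^{(k-1)(n-2)},\ \|q\|^{k},\ \|q\|^{k(n-1)}\right\} \leq \max\left\{1,\ \|q\|^{k(n-1)}\right\}.
\]

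The main (and only) obstacle is this exponent bookkeeping, which I would handle by splitting on whether $\|q\| \leq 1$ or $\|q\| \geq 1$. If $\|q\| \leq 1$, every exponent appearing on the left is nonnegative (note $(k-1)(n-2) \geq 0$ once $k \geq 1$ and $n \geq 2$; the degenerate case $n=1, k=1$ gives $\binom{1}{1}_{q}=1$ directly), so the whole maximum is bounded by $1$. If $\|q\| \geq 1$ it suffices to compare exponents, and one checks $(k-1)(n-2) \leq k(n-1)$ (equivalent to $k+n \geq 2$) and $k \leq k(n-1)$ (which needs $n \geq 2$, with $n=1$ handled separately as above). Both inequalities hold in the relevant range, completing the induction.
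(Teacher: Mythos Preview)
Your proof is correct and follows exactly the same route as the paper: induction on $n$ via the quantum Pascal identity, ultrametric triangle inequality, and then the exponent comparison
\[
\max\{1,\ \|q\|^{(k-1)(n-2)},\ \|q\|^{k},\ \|q\|^{k(n-1)}\} = \max\{1,\ \|q\|^{k(n-1)}\}.
\]
The paper simply asserts this last equality, while you spell out the case split on $\|q\|\le 1$ versus $\|q\|\ge 1$; one very minor point is that your handling of $n=1$ only mentions $k=1$ explicitly, but the remaining cases $n=1,\ k\ge 2$ are trivial since $\binom{1}{k}_{q}=0$.
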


The case $k = 1$ reads $\|(n)_{q}\| \leq \max\left\{1, \|q\|^{n-1}\right\}$.

\begin{proof}
By induction, we will have for $n, k \geq 1$,
\begin{eqnarray*}
\left\|{n \choose k}_{q}\right\| &\leq& \max\left\{\left\|{n-1 \choose k-1}_{q}\right\|, \|q\|^k \left\|{n-1 \choose k}_{q}\right\| \right\}
\\
&\leq& \max\left\{1, \|q\|^{(k-1)(n-2)}, \|q\|^k, \|q\|^{k+k(n-2)} \right\}
\\
&=& \max\left\{1, \|q\|^{k(n-1)}\right\}. \qedhere
\end{eqnarray*}
\end{proof}

\section{Twisted principal parts of finite radius}

We let $(A, \sigma)$ be a twisted affinoid $R$-algebra with twisted coordinate $x$ and fixed contractive norm.

\begin{dfn} \label{xrad}
The \emph{$x$-radius} of $\sigma$ is
\[
\rho_{x}(\sigma) := \|x -\sigma(x)\|.
\]
\end{dfn}

We will usually drop the subscript $x$ and might even just denote by $\rho$ the $x$-radius of $\sigma$ on $A$.
We will be mostly interested in conditions of the type $\eta \geq \rho$ in which $\eta$ is some non negative real number.

\begin{lem} \label{xrad}
For all $n \in \mathbb N$, we have $\rho(\sigma^{n}) \leq \rho(\sigma)$.
\end{lem}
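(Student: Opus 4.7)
The plan is to proceed by induction on $n$, using both the ultrametric triangle inequality and the assumption that the norm is contractive with respect to $\sigma$.

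The base cases $n = 0$ (where $\rho(\sigma^0) = \|x - x\| = 0$) and $n = 1$ (trivial) are immediate. For the inductive step, assuming $\|x - \sigma^n(x)\| \leq \|x - \sigma(x)\|$, I would write the telescoping decomposition
\[
x - \sigma^{n+1}(x) = \bigl(x - \sigma(x)\bigr) + \sigma\bigl(x - \sigma^{n}(x)\bigr).
\]
Applying the ultrametric inequality yields
\[
\|x - \sigma^{n+1}(x)\| \leq \max\bigl\{\|x - \sigma(x)\|,\ \|\sigma(x - \sigma^{n}(x))\|\bigr\},
\]
and because $\sigma$ is contractive the second term is bounded by $\|x - \sigma^{n}(x)\|$, which by the induction hypothesis is at most $\|x - \sigma(x)\|$. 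Hence the maximum is $\leq \rho(\sigma)$, giving the claim.

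There is no genuine obstacle here; the only point to be careful about is that the argument relies on the fixed norm being contractive for $\sigma$, a hypothesis which has been imposed on $(A,\sigma)$ at the start of the section. The ultrametric property of the norm and the contractivity together are exactly what is needed, and the telescoping identity above is the natural way to combine them.
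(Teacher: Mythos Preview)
Your proof is correct and essentially the same as the paper's: induction on $n$, a telescoping decomposition of $x-\sigma^{n+1}(x)$, the ultrametric inequality, and contractivity of $\sigma$. The only cosmetic difference is that the paper splits as $(x-\sigma^n(x))+\sigma^n(x-\sigma(x))$ rather than your $(x-\sigma(x))+\sigma(x-\sigma^n(x))$, which is immaterial.
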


\begin{proof}
Since $\sigma$ is a contraction, we will have by induction on $n$,
\begin{eqnarray*}
\|x -\sigma^{n+1}(x)\| &\leq& \max \{\|x -\sigma^{n}(x)\|, \|\sigma^{n}(x -\sigma(x))\| \}
\\
&\leq& \max \{\rho(\sigma^n), \rho(\sigma) \}
\\
&=& \rho(\sigma). \qedhere
\end{eqnarray*}\end{proof}

\begin{xmps}
\begin{enumerate}
\item We have $\rho(\mathrm{Id}_{A}) = 0$ and it follows that the condition $\eta \geq \rho$ is always satisfied in this case.
\item When $A := R\{x/r, r_{1}/x\}$ and $\sigma(x) = qx+h$ (so that conditions \eqref{cond1} and \eqref{cond2} in the second example of \ref{xmp} are satisfied), we have
\[
x - \sigma(x) = (1-q)x - h
\]
and it follows that
\[
\rho(\sigma) =\max\{\|1-q\|r, \| h \| \}.
\]
Thus, we see that $\eta \geq \rho$ if and only if
\begin{equation} \label{cond3}
\|1-q\| \leq \frac \eta r \quad \mathrm{and} \quad \|h\| \leq \eta.
\end{equation}
Note that the first (resp. second) condition will always be satisfied in the difference (resp. $q$-difference) equation case.
\end{enumerate}
\end{xmps}

We consider now the affinoid algebra
\[
A\{\xi/\eta\} := \left\{\sum_{n=0}^\infty z_{n} \xi^n, z_{n} \in A\ \mathrm{and}\ \|z_{n}\|\eta^n \to 0\ \mathrm{when}\ n \to \infty \right\}
\]
of functions that converge on the relative closed disk of radius $\eta$.
This is a Banach algebra for the sup norm
\[
\left\|\sum_{n=0}^\infty z_{n} \xi^n \right\|_{\eta} = \max \|z_{n}\|\eta^n.
\]

\begin{rmk}
If $\eta \geq \rho$, then the $\sigma$-linear morphism of rings
\[
\xymatrix@R0cm{ \xi  \ar@{|->}[r] & \xi + \sigma(x) -x}
\]
turns $A\{\xi/\eta\}$ into a twisted affinoid $R$-algebra because
\[
\|\xi + \sigma(x) -x\|_{\eta} = \max\{\eta, \rho\} = \eta.
\]
We will still denote this map by $\sigma$ so that $\sigma(\xi) =\xi + \sigma(x) -x$.
\end{rmk}

\begin{lem} \label{aut}
If $\eta \geq \rho$, then the $A$-linear map
\[
\xymatrix@R0cm{ \xi^n  \ar@{|->}[r] & \xi^{(n)}} := \xi\sigma(\xi)\ldots \sigma^{n-1}(\xi)
\]
defines an isometric automorphism of the $A$-module $A\{\xi/\eta\}$.
\end{lem}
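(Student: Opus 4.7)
The plan is to establish the statement in three stages: first bound the norms of the polynomials $\xi^{(n)}$, then show that the prescribed map extends to an isometric endomorphism of $A\{\xi/\eta\}$, and finally produce a continuous inverse.

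For the norm estimate I would begin by checking, by induction on $i$, that $\sigma^i(\xi) = \xi + \sigma^i(x) - x$, directly from the definition $\sigma(\xi) = \xi + \sigma(x) - x$ and the fact that $\sigma$ is a ring endomorphism of $A\{\xi/\eta\}$. Combined with lemma \ref{xrad} and the hypothesis $\eta \geq \rho$ this gives
\[
\|\sigma^i(\xi)\|_\eta = \max\bigl(\eta,\|x-\sigma^i(x)\|\bigr) = \eta,
\]
so submultiplicativity of the Gauss norm yields $\|\xi^{(n)}\|_\eta \leq \eta^n$. Expanding the product shows that $\xi^{(n)} = \xi^n + \sum_{k<n} c_{n,k} \xi^k$ has leading term $\xi^n$, whence $\|\xi^{(n)}\|_\eta = \eta^n$ and $\|c_{n,k}\| \leq \eta^{n-k}$ for all $k < n$.

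Next I would check that $\Phi : \sum_n z_n \xi^n \mapsto \sum_n z_n \xi^{(n)}$ is a well-defined continuous $A$-linear endomorphism of $A\{\xi/\eta\}$, convergence holding because $\|z_n \xi^{(n)}\|_\eta = \|z_n\|\eta^n \to 0$. Re-indexing in the monomial basis gives $\Phi(f) = \sum_k \bigl(\sum_{n\geq k} z_n c_{n,k}\bigr) \xi^k$, and the bound $\|c_{n,k}\|\leq \eta^{n-k}$ immediately yields $\|\Phi(f)\|_\eta \leq \|f\|_\eta$. The reverse inequality is the heart of the proof: given nonzero $f = \sum z_n \xi^n$, pick the largest index $N$ with $\|z_N\|\eta^N = \|f\|_\eta$, which exists because $\|z_n\|\eta^n \to 0$. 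For every $n > N$,
\[
\|z_n c_{n,N}\| \eta^N \leq \|z_n\|\eta^n < \|f\|_\eta,
\]
while the diagonal contribution $z_N c_{N,N} = z_N$ contributes exactly $\|z_N\|\eta^N = \|f\|_\eta$ at degree $N$. The strict ultrametric triangle inequality then forces the coefficient of $\xi^N$ in $\Phi(f)$ to have norm $\|z_N\|$, so $\|\Phi(f)\|_\eta \geq \|f\|_\eta$.

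For bijectivity the change-of-basis matrix $(c_{n,k})_{k\leq n}$ is unipotent lower triangular, so there exist unique $b_{n,k}\in A$ with $b_{n,n}=1$ satisfying $\xi^n = \sum_{k=0}^n b_{n,k}\xi^{(k)}$. Applying the isometry just proved to the polynomial $\sum_k b_{n,k} \xi^k$ yields $\eta^n = \|\xi^n\|_\eta = \|\sum_k b_{n,k}\xi^k\|_\eta$, hence $\|b_{n,k}\|\leq \eta^{n-k}$. The formula $\Psi : \sum_n z_n \xi^n \mapsto \sum_k \bigl(\sum_{n\geq k} z_n b_{n,k}\bigr) \xi^k$ therefore defines a continuous $A$-linear endomorphism of $A\{\xi/\eta\}$, and the identities $\Phi\Psi = \Psi\Phi = \mathrm{Id}$ are verified on polynomials and extend by continuity. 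The hard step is really the isometry: the upper bound is routine, but the lower bound genuinely needs the ultrametric triangle applied together with the careful choice of $N$; once it is in hand, bijectivity follows almost automatically from the unipotent triangular shape of the expansion.
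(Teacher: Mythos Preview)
Your proof is correct and follows the same route as the paper: establish $\xi^{(n)}=\xi^n+f_n$ with $f_n\in A[\xi]_{<n}$ and $\|f_n\|_\eta\le\eta^n$, then deduce that $\xi^n\mapsto\xi^{(n)}$ is an isometric automorphism. The paper compresses the isometry and surjectivity into a single sentence (``an isometry that preserves the degree \ldots\ extends \ldots\ to an isometry onto itself''), whereas you unpack both the lower-bound inequality via the maximal index $N$ and the explicit triangular inverse $\Psi$; this is extra detail rather than a different argument.
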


\begin{proof}
Since we assume that $\eta \geq \rho$, it follows from lemma \ref{xrad} that for all $i \in \mathbb N$, we have $\|x -\sigma^i(x)\| \leq \eta$.
Thus we see that
\begin{equation} \label{shaub}
\xi^{(n)} := \prod_0^{n-1} (\xi + (x - \sigma^i(x))) = \xi^n + f_{n}
\end{equation}
where $f_{n} \in A[\xi]_{<n}$ and $\|f_{n}\|_{\eta} \leq \eta^n$.
Hence, the unique $A$-linear endomorphism of $A[\xi]$ that send $\xi^n$ to $\xi^{(n)}$ is an isometry that preserves the degree.
It extends therefore uniquely to an isometry from $A\{\xi/\eta\}$ onto itself.
\end{proof}

\begin{prop} \label{schau}
When $\eta \geq \rho$, $\{\xi^{(n)}\}_{n \in \mathbb N}$ is an \emph{orthogonal Schauder basis} for the $A$-module $A\{\xi/\eta\}$: any $f \in A\{\xi/\eta\}$ can be uniquely written as a convergent sum
\[
f = \sum_{n=0}^\infty z_{n} \xi^{(n)}
\]
with $z_{n} \in A$ and $\|f\|_{\eta} = \sup\{ \|z_{n}\|\eta^n \} $.
\end{prop}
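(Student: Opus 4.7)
The plan is to deduce this proposition directly from Lemma \ref{aut}. By the very definition of the Tate-style algebra $A\{\xi/\eta\}$, the monomials $\{\xi^{n}\}_{n \in \mathbb N}$ already form an orthogonal Schauder basis: every $f$ has a unique expansion $f = \sum_{n \geq 0} z_{n}' \xi^{n}$ with $\|z_{n}'\|\eta^{n} \to 0$, and $\|f\|_{\eta} = \sup \|z_{n}'\|\eta^{n}$. The goal is then to transport this structure along the $A$-linear isometric automorphism $\varphi$ furnished by Lemma \ref{aut}, which sends $\xi^{n}$ to $\xi^{(n)}$.

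First I would address convergence of sums of the form $\sum z_{n} \xi^{(n)}$ with $\|z_{n}\|\eta^{n} \to 0$: using \eqref{shaub}, or more directly the fact that $\varphi$ preserves the norm, one has $\|\xi^{(n)}\|_{\eta} = \eta^{n}$, so $\|z_{n}\xi^{(n)}\|_{\eta} \leq \|z_{n}\|\eta^{n} \to 0$ and the series converges in the Banach algebra $A\{\xi/\eta\}$. Thus such expressions make sense and define elements of $A\{\xi/\eta\}$.

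For existence of the expansion, given $f \in A\{\xi/\eta\}$, set $g := \varphi^{-1}(f)$ and write $g = \sum_{n \geq 0} z_{n}\xi^{n}$ in the standard Schauder basis. By continuity of $\varphi$, we get $f = \varphi(g) = \sum_{n \geq 0} z_{n}\xi^{(n)}$. For uniqueness, if $\sum z_{n}\xi^{(n)} = \sum z_{n}'\xi^{(n)}$, applying the (continuous) inverse $\varphi^{-1}$ yields $\sum z_{n}\xi^{n} = \sum z_{n}'\xi^{n}$, and uniqueness of the standard expansion gives $z_{n} = z_{n}'$. Finally, for the norm identity, $\|f\|_{\eta} = \|\varphi(g)\|_{\eta} = \|g\|_{\eta} = \sup \|z_{n}\|\eta^{n}$, since $\varphi$ is an isometry.

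There is essentially no obstacle here beyond invoking Lemma \ref{aut} correctly; the only subtle point is making sure the continuous extension of $\varphi$ actually commutes with the infinite sum, which is immediate from continuity and the ultrametric convergence criterion. In effect, the whole content of the proposition has been packaged into the isometric automorphism statement of the previous lemma, and this proof is just the observation that an isometric automorphism carries an orthogonal Schauder basis to an orthogonal Schauder basis.
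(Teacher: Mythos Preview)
Your argument is correct and follows exactly the same approach as the paper, which simply notes that the result follows from Lemma \ref{aut} together with the fact that $\{\xi^{n}\}_{n \in \mathbb N}$ is already an orthogonal Schauder basis for $A\{\xi/\eta\}$. You have merely spelled out in detail the routine transport of the Schauder basis property along the isometric automorphism, which is precisely what the paper's one-line proof is invoking.
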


\begin{proof} Follows from lemma \ref{aut} and the fact that $\{\xi^{n}\}_{n \in \mathbb N}$ is an orthogonal Schauder basis for $A\{\xi/\eta\}$. \end{proof}

There exists an analytic version of the isomorphism \eqref{dfiso}:

\begin{prop} \label{comps}
Assume that $\eta \geq \rho$.
Then, for all $n \in \mathbb N$, there exists an isomorphism of $A$-algebras
\[
\xymatrix@R0cm{  A\{\xi/\eta\}/\xi^{(n)} \ar[r]^-\simeq & P_{(n)}
\\ \xi \ar@{|->}[r] & \overline{\widetilde x - x}.}
\]
\end{prop}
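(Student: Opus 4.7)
The plan is to upgrade the algebraic isomorphism $A[\xi]/\xi^{(n)} \simeq P_{(n)}$ from \eqref{dfiso} to the analytic setting by showing that the inclusion $A[\xi] \hookrightarrow A\{\xi/\eta\}$ induces an isomorphism after reducing modulo the ideal generated by $\xi^{(n)}$. It suffices to establish the direct-sum decomposition
\[
A\{\xi/\eta\} = A[\xi]_{<n} \oplus \xi^{(n)} A\{\xi/\eta\}
\]
parallel to the algebraic decomposition $A[\xi] = A[\xi]_{<n} \oplus \xi^{(n)} A[\xi]$ (which holds because $\xi^{(n)}$ is monic of degree $n$ in $\xi$), where $A[\xi]_{<n}$ denotes the $A$-span of $\{\xi^{(0)}, \xi^{(1)}, \ldots, \xi^{(n-1)}\}$; this yields $A\{\xi/\eta\}/\xi^{(n)} \simeq A[\xi]/\xi^{(n)}$ as $A$-algebras, and composing with \eqref{dfiso} finishes the job.

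I would start from the Schauder expansion of Proposition \ref{schau}: any $f \in A\{\xi/\eta\}$ writes uniquely as $f = \sum_{k \geq 0} z_k \xi^{(k)}$ with $\|z_k\|\eta^k \to 0$. Splitting $f$ into head $\sum_{k<n} z_k \xi^{(k)} \in A[\xi]_{<n}$ plus tail $\sum_{k \geq n} z_k \xi^{(k)}$, the factorization $\xi^{(n+j)} = \xi^{(n)} \cdot \sigma^n(\xi) \sigma^{n+1}(\xi) \cdots \sigma^{n+j-1}(\xi)$, which is immediate from the definition $\xi^{(k)} = \prod_{i=0}^{k-1} \sigma^i(\xi)$, allows me to rewrite the tail as $\xi^{(n)} \cdot g$ with
\[
g := \sum_{j \geq 0} z_{n+j}\, \sigma^n(\xi) \sigma^{n+1}(\xi) \cdots \sigma^{n+j-1}(\xi).
\]
Convergence of $g$ in $A\{\xi/\eta\}$ is the crux and is where the hypothesis $\eta \geq \rho$ enters: combining Lemma \ref{xrad} with $\sigma^i(\xi) = \xi + \sigma^i(x) - x$, one obtains $\|\sigma^i(\xi)\|_\eta \leq \max\{\eta, \rho(\sigma^i)\} \leq \eta$, so the $j$-th term of $g$ has norm at most $\|z_{n+j}\|\eta^j = (\|z_{n+j}\|\eta^{n+j})/\eta^n \to 0$. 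The tail therefore lies in $\xi^{(n)} A\{\xi/\eta\}$.

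For the reverse inclusion I would expand any product $\xi^{(n)} \xi^{(k)}$ in the Schauder basis: as a polynomial in $\xi$ it has degree exactly $n+k$ and is visibly divisible by $\xi^{(n)}$ in $A[\xi]$, so by polynomial division it belongs to $\mathrm{span}_A\{\xi^{(j)} : n \leq j \leq n+k\}$. Hence any $\xi^{(n)} g$ with $g = \sum_k w_k \xi^{(k)} \in A\{\xi/\eta\}$ lives in the closed $A$-span of $\{\xi^{(j)}\}_{j \geq n}$, with convergence again guaranteed by the norm estimates above. Putting the two inclusions together yields the decomposition, so the natural map $A[\xi]/\xi^{(n)} \to A\{\xi/\eta\}/\xi^{(n)}$ is an $A$-algebra isomorphism, and composing with \eqref{dfiso} produces the desired $A\{\xi/\eta\}/\xi^{(n)} \simeq P_{(n)}$ sending $\xi$ to $\overline{\widetilde x - x}$. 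The main technical obstacle is precisely the convergence of $g$, and the hypothesis $\eta \geq \rho$ is tailor-made to secure it via Lemma \ref{xrad}.
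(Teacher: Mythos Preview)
Your proof is correct and follows essentially the same route as the paper's. The paper reduces, via \eqref{dfiso}, to showing that the composite $A[\xi]_{<n} \to A[\xi]/\xi^{(n)} \to A\{\xi/\eta\}/\xi^{(n)}$ is bijective and then simply invokes Proposition~\ref{schau}; your argument is a careful unpacking of exactly that invocation, establishing the decomposition $A\{\xi/\eta\} = A[\xi]_{<n} \oplus \xi^{(n)} A\{\xi/\eta\}$ from the Schauder basis.
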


\begin{proof}
Since we know that \eqref{dfiso} holds, we only have to prove that the map
\[
\xymatrix@R0cm{A[\xi]_{\leq n} \ar[r]^-\simeq & A[\xi]/\xi^{(n)} \ar[r] & A\{\xi/\eta\}/\xi^{(n)}
}
\]
is bijective.
But this follows from proposition \ref{schau} (or lemma \ref{aut} if you want).\end{proof}

\begin{cor}
Assume that $\eta \geq \rho$.
Then, if $M$ is any finite $A$-module, there exists a canonical embedding
\[
M \otimes_A A\{\xi/\eta\} \hookrightarrow M \otimes_{A} \widehat P_{\sigma}.
\]
\end{cor}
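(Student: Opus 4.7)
The plan is to first construct a canonical inclusion $A\{\xi/\eta\} \hookrightarrow \widehat P_{\sigma}$ at the module level, and then to exploit the finiteness of $M$ together with the open mapping theorem for finite Banach $A$-modules in order to control the kernel after tensoring.

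The compatible surjections $A\{\xi/\eta\} \twoheadrightarrow A\{\xi/\eta\}/\xi^{(n)} \simeq P_{(n)}$ from proposition \ref{comps} induce an $A$-linear map $A\{\xi/\eta\} \to \widehat P_{\sigma} = \varprojlim_n P_{(n)}$. Since $\{\xi^{(n)}\}_{n \in \mathbb N}$ is also a topological basis of $\widehat P_\sigma$, viewed as the $A$-module of formal series $\sum_n z_n \xi^{(n)}$ with $z_n \in A$ (no convergence condition), this map is nothing but the inclusion of convergent series into all formal series. Proposition \ref{schau} yields $\|f\|_\eta = \sup_n \|z_n\|\eta^n$, whence $f = 0$ iff every $z_n = 0$, so the map is injective. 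Tensoring over $A$ with $M$ produces the candidate embedding.

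Since $A$ is Noetherian, $M$ admits a finite presentation $A^s \overset{\varphi}\longrightarrow A^r \longrightarrow M \longrightarrow 0$. Applying $-\otimes_A A\{\xi/\eta\}$ and $-\otimes_A \widehat P_\sigma$ yields two right-exact rows in a commutative diagram whose first two vertical arrows $A\{\xi/\eta\}^{s}\hookrightarrow \widehat P_\sigma^{s}$ and $A\{\xi/\eta\}^{r}\hookrightarrow \widehat P_\sigma^{r}$ are injective coordinate-wise. A short diagram chase reduces the injectivity of the third vertical arrow to the following assertion: any $\tilde u \in A\{\xi/\eta\}^r$ whose image in $\widehat P_\sigma^r$ lies in $\varphi(\widehat P_\sigma^s)$ must already lie in $\varphi(A\{\xi/\eta\}^s)$.

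This last assertion is the main obstacle. I would decompose $\tilde u = \sum_n t_n \xi^{(n)}$ with $t_n \in A^r$ and $\|t_n\|\eta^n \to 0$ (proposition \ref{schau} applied coordinate-wise), and write a hypothetical preimage as $g = \sum_n g_n \xi^{(n)} \in \widehat P_\sigma^s$. Matching coefficients of $\xi^{(n)}$ in the identity $\tilde u = \varphi(g)$ forces $t_n = \varphi(g_n) \in \varphi(A^s)$ for every $n$. The open mapping theorem for continuous surjections of finite Banach $A$-modules \cite{BoschGuntzerRemmert84} then supplies a constant $C > 0$ such that each $t_n$ admits some lift $g'_n \in A^s$ with $\|g'_n\| \leq C\|t_n\|$. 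Since $\|g'_n\|\eta^n \to 0$, the series $g' := \sum_n g'_n \xi^{(n)}$ is a genuine element of $A\{\xi/\eta\}^s$ and satisfies $\varphi(g') = \tilde u$. The delicate point is precisely the bounded character of the lifts: an arbitrary choice of preimages would assemble only into a formal series in $\widehat P_\sigma^s$, not into an element of the source $A\{\xi/\eta\}^s$.
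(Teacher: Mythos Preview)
Your argument is correct, but it takes a longer path than the paper. The paper observes directly that, since $M$ is finite and $\{\xi^{(n)}\}$ is an orthogonal Schauder basis, the tensor product $M \otimes_A A\{\xi/\eta\}$ is identified with the completed direct sum $\{\sum_n s_n \xi^{(n)} : s_n \in M,\ \|s_n\|\eta^n \to 0\}$; then vanishing in $M \otimes_A \widehat P_\sigma$ forces vanishing in every $M \otimes_A P_{(n)}$, which kills each $s_n$ one at a time. No presentation of $M$, no diagram chase, and no open mapping theorem are needed.

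Your approach via a finite presentation and bounded lifts is a legitimate alternative and has the virtue of not invoking the interchange of completed tensor product with completed direct sum, which the paper leaves implicit. The price is the extra machinery: you need the open mapping theorem applied to $\varphi : A^s \twoheadrightarrow \varphi(A^s)$ (with $\varphi(A^s)$ closed because submodules of finite $A$-modules are closed) to manufacture lifts $g'_n$ whose norms decay fast enough to reassemble into an element of $A\{\xi/\eta\}^s$. That step is exactly right, and your remark that an arbitrary choice of preimages would only live in $\widehat P_\sigma^s$ pinpoints the genuine content of the argument.
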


\begin{proof}
Using proposition \ref{comps}, one easily sees that there exists such a map.
Injectivity means that if $\sum s_{i} \otimes f_{i} \equiv 0 \mod \xi^{(n+1)}$ for all $n \in \mathbb N$, then $\sum s_{i} \otimes f_{i} =0$ in $M \otimes_A A\{\xi/\eta\}$.
But this follows from the fact that the $\xi^{(n)}$ form an orthogonal Schauder basis (and the properties of completed direct sums and tensor products).
\end{proof}

\begin{dfn}
If $\eta \geq \rho$, then the image of $A\{\xi/\eta\}$ in $\widehat P_{\sigma}$ is called the ring of \emph{twisted principal parts of radius (at least) $\eta$}.
\end{dfn}

In practice we will identify $A\{\xi/\eta\}$ with its image and call this ring itself the ring of twisted principal parts of radius $\eta$, exactly as we identify $A[[\xi]]_{\sigma}$ with $\widehat P_{\sigma}$.

\section{Radius of convergence}

We still let $(A, \sigma)$ be a twisted affinoid $R$-algebra with fixed contractive norm and $x$ a be twisted coordinate on $A$.
We denote by $\rho$ the $x$-radius of $\sigma$ on $A$ and by $\partial^{[k]}_{\sigma}$ the standard twisted differential operator of order $k$ associated to $x$.

\begin{dfn}
Let $M$ be an $A$-finite $\mathrm D_{\sigma}^{(\infty)}$-module and $\eta \in \mathbb R$.
\begin{enumerate}
\item Let $s \in M$.
Then,
\begin{enumerate}
\item 
$s$ is \emph{$\eta$-convergent} if
\[
\|\partial^{[k]}_{\sigma}(s)\|\eta^k \to 0\ \mathrm{when}\ k \to +\infty,
\]
\item
the \emph{radius of convergence of $s$} is
\[
\mathrm{Rad}(s) = \sup\{\eta, s\ \mathrm{is}\ \eta\mathrm{-convergent}\}.
\]
\item
$s$ is \emph{$\eta\dagger$-convergent} if $\mathrm{Rad}(s) \geq \eta$.
\end{enumerate}
\item
\begin{enumerate}
\item
$M$ is \emph{$\eta$-convergent} if all $s \in M$ are $\eta$-convergent,
\item
the \emph{radius of convergence of $M$} is
\[
\mathrm{Rad}(M) = \inf_{s \in M} \mathrm{Rad}(s).
\]
\item
$M$ is \emph{$\eta\dagger$-convergent} if all $s \in M$ are $\eta\dagger$-convergent,
\end{enumerate}
\end{enumerate}
\end{dfn}

\begin{rmks}
\begin{enumerate}
\item
Alternatively, $M$ is $\eta$-convergent if and only if
\[
\forall s \in M, \quad \|\partial^{[k]}_{\sigma}(s)\|\eta^k \to 0\ \mathrm{when}\ k \to +\infty,
\]
we have
\[
\mathrm{Rad}(M) = \sup\{\eta, M\ \mathrm{is}\ \eta\mathrm{-convergent}\},
\]
and $M$ is \emph{$\eta\dagger$-convergent} if and only if $\mathrm{Rad}(M) \geq \eta$.
\item
We have explicit formulas
\[
\mathrm{Rad}(s) = \underline \lim_{k} \|\partial_{\sigma}^{[k]}(s)\|^{-\frac 1k} \quad \mathrm{and} \quad \mathrm{Rad}(M) = \inf_{s \in M} \underline\lim_{k} \|\partial_{\sigma}^{[k]}(s)\|^{-\frac 1k}.
\]
\item
If $s$ (resp. $M$) is $\eta$-convergent then $s$ (resp. $M$) is $\eta\dagger$-convergent.
There exists a partial converse: $s$ (resp. $M$) is $\eta\dagger$-convergent if and only if $s$ (resp. $M$) is $\eta'$-convergent whenever and $\eta' < \eta$.
\item When $\sigma = \mathrm{Id}_{A}$, we recover standard notions (see proposition 4.4.11 of \cite{LeStum07} for example).
More precisely, if $\mathcal X$ is a smooth affine formal $\mathcal V$-scheme with an \'etale coordinate $x$, $\mathcal M$ is a coherent $\mathcal O_{\mathcal X_{K}}$-module and $M := \Gamma(\mathcal X, \mathcal M)$, then a connection on $\mathcal M$ is convergent (in the sense of rigid cohomology) if and only if it is $1\dagger$-convergent in our sense.
\item This notion of radius of convergence should not be confused with the notion of \emph{radius of convergence at a generic point} which is more subtle.
For example, if $A = R\{x, r_{1}/x\}$ with $r_{1} < 1$ and $\sigma = \mathrm{Id}_{A}$, we have $\mathrm{Rad}(A) = r_{1} \neq 1$ (see example below).
But the trivial connection on $A$ must clearly be ``convergent''.
This phenomenon appears because, even if $A$ is smooth, it has \emph{bad} reduction.
\end{enumerate}
\end{rmks}

\begin{xmps}
\begin{enumerate}
\item Assume that $A = R\{x\}$ and $\sigma(x) = qx$ with $\| q\| \leq 1$.
Then, it follows from proposition \ref{qest} that we always have $\|{n \choose k}_{q}\| \leq 1$.
Thus, thanks to formula \eqref{dkx}, we see that
\[
\forall k \leq n \in \mathbb N, \quad \|\partial^{[k]}_{\sigma}(x^n)\| = \left\|{n \choose k}_{q} x^{n-k}\right\| \leq 1
\]
(and $\partial^{[k]}_{\sigma}(x^n) = 0$ for $k > n$).
It follows that if $z = \sum_{n=0}^{\infty} a_{n}x^n \in R\{x\}$, then we have
\[
\|\partial^{[k]}_{\sigma}(z)\| \leq \max \|a_{n} \partial^{[k]}_{\sigma}(x^n)\| \leq \max_{n \geq k} \|a_{n}\|\ \to 0.
\]
An we see that $A$ is $\eta$-convergent for any $\eta \leq 1$.
\item We consider now the situation
\[
R = K, A = K\{x/r, r_{1}/x\} \quad \mathrm{and} \quad \sigma(x) = qx+h
\]
with $0 < r_{1} \leq r$.
We first notice that formula \eqref{dkx} above implies that for all $n \in \mathbb Z$ and $k > 0$, we have
\begin{equation} \label{indf}
\partial^{[k]}_{\sigma}(x^{n+1}) = \sigma^k(x) \partial^{[k]}_{\sigma}(x^n) + \partial^{[k-1]}_{\sigma}(x^n).
\end{equation}
Since we must have $|\sigma^k(x)| \leq r$, we obtain by induction that, for $n \geq 0$, we have $|\partial^{[k]}_{\sigma}(x^n)| \leq r^{n-k}$.
Using equality \eqref{indf} again,  we can also prove that for $n \leq 0$, we have $|\partial^{[k]}_{\sigma}(x^n)| \leq r_{1}^{n-k}$.
More precisely, since we always have $|\sigma^k(x)| \geq r_{1}$, then by induction on $k-n$, we obtain
\[
|\partial^{[k]}_{\sigma}(x^n)| \leq \frac 1{|\sigma^k(x)|}r_{1}^{n-k+1} \leq r_{1}^{n-k}.
\]
It follows that, if $z = \sum_{-\infty}^\infty a_{n}x^n \in A$, we have
\[
|\partial^{[k]}_{\sigma}(z)| \leq \max_{n} \left |a_{n} \partial^{[k]}_{\sigma}(x^n) \right | \leq \frac 1{r_{1}^k}\max_{n}\{|a_{n}|r^n, |a_{n}|r_{1}^n\} = \frac 1{r_{1}^k}|z|.
\]
Thus we see that $A$ is $\eta$-convergent as long as $\eta < r_{1}$ (one can show that we do have $\mathrm{Rad}(A) = r_{1}$).
For further use, note that $A$ is $\eta$-convergent for some $\eta \geq \rho$ if and only if
\[
|h| \leq \eta < r_{1} \quad \mathrm{and} \quad |1 - q| \leq \frac \eta r.
\]
When this is the case, then $\sigma$ is always an (isometric) automorphism.
\end{enumerate}
\end{xmps}

\begin{lem}
Let $M$ be an $A$-finite $\mathrm D_{\sigma}^{(\infty)}$-module and $\eta \geq \rho$.
Then, $s \in M$ is $\eta$-convergent if and only if its twisted Taylor series
\[
\widehat \theta(s) := \sum_{k=0}^\infty \partial^{[k]}_{\sigma}(s) \otimes \xi^{(k)}
\]
falls inside $M \otimes_A A\{\xi/\eta\} \subset M \otimes \widehat P_{\sigma}$.
\end{lem}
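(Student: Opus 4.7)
The plan is to reduce the statement to the concrete description of elements of $M \otimes_{A} A\{\xi/\eta\}$ as convergent series in the $\xi^{(k)}$. The orthogonal Schauder basis from Proposition \ref{schau} supplies the right framework, and matching the Taylor series coefficients against this basis, term by term, will yield the equivalence.

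First I would establish that every element of $M \otimes_{A} A\{\xi/\eta\}$ can be written as a convergent sum $\sum_{k \geq 0} m_{k} \otimes \xi^{(k)}$ with $m_{k} \in M$ and $\|m_{k}\|\eta^{k} \to 0$. Pick a surjection $A^{r} \twoheadrightarrow M$; tensoring with $A\{\xi/\eta\}$ gives a surjection $A\{\xi/\eta\}^{r} \twoheadrightarrow M \otimes_{A} A\{\xi/\eta\}$, and Proposition \ref{schau} provides an orthogonal Schauder expansion on each copy of $A\{\xi/\eta\}^{r}$ along $\{\xi^{(k)}\}$, which descends by continuity of the quotient norm to $M$-valued series with the claimed convergence condition. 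Note that $M \otimes_{A} A\{\xi/\eta\}$ is automatically complete as a finite $A\{\xi/\eta\}$-module.

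For the forward direction, assume $\widehat{\theta}(s) \in M \otimes_{A} A\{\xi/\eta\}$, and write it as a convergent series $\sum_{k} m_{k} \otimes \xi^{(k)}$ with $\|m_{k}\|\eta^{k} \to 0$. Reducing modulo $\xi^{(n+1)}$ inside $M \otimes_{A} \widehat{P}_{\sigma}$ produces the truncation $\theta_{n}(s) = \sum_{k=0}^{n} \partial^{[k]}_{\sigma}(s) \otimes \xi^{(k)}$ in $M \otimes_{A} P_{(n)}$. By the isomorphism \eqref{dfiso}, $P_{(n)}$ is free over $A$ with basis $\{\xi^{(0)}, \ldots, \xi^{(n)}\}$, so $M \otimes_{A} P_{(n)}$ decomposes canonically as $\bigoplus_{k=0}^{n} M \otimes \xi^{(k)}$. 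Comparing the two expressions coefficient by coefficient forces $m_{k} = \partial^{[k]}_{\sigma}(s)$ for every $k \leq n$, and letting $n$ vary gives equality for all $k$; the summability $\|m_{k}\|\eta^{k} \to 0$ is then exactly the $\eta$-convergence of $s$. Conversely, if $s$ is $\eta$-convergent, the condition $\|\partial^{[k]}_{\sigma}(s)\|\eta^{k} \to 0$ makes the partial sums of $\sum_{k} \partial^{[k]}_{\sigma}(s) \otimes \xi^{(k)}$ Cauchy in the Banach module $M \otimes_{A} A\{\xi/\eta\}$; this sum converges there, and under the embedding of the previous Corollary it maps to $\widehat{\theta}(s)$, so $\widehat{\theta}(s)$ already lies in $M \otimes_{A} A\{\xi/\eta\}$.

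The main subtlety is controlling the Banach structure on $M \otimes_{A} A\{\xi/\eta\}$ when $M$ is merely finite (not free) over $A$, and in particular making sure that the Schauder expansion of an arbitrary tensor is genuinely $M$-valued with the right summability estimate; this is the role of the presentation $A^{r} \twoheadrightarrow M$ together with the openness of the quotient map. The uniqueness needed in the forward direction is cleanly handled at each finite level by the freeness of $P_{(n)}$ over $A$ coming from \eqref{dfiso}, so the argument never requires a Schauder basis of $M$ itself.
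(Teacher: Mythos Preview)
Your proof is correct and follows the same approach as the paper, which simply says the result is ``immediate from the definitions since the $\xi^{(k)}$'s form an orthogonal Schauder basis of $A\{\xi/\eta\}$ by proposition \ref{schau}.'' You have unpacked what the paper leaves implicit, in particular the passage from the Schauder basis of $A\{\xi/\eta\}$ to an $M$-valued expansion via a presentation $A^{r}\twoheadrightarrow M$, and the coefficient comparison at finite level using the freeness of $P_{(n)}$; these are exactly the details hidden behind the word ``immediate.''
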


\begin{proof}
Immediate from the definitions since the $\xi^{(k)}$'s form an orthogonal Schauder basis of $A\{\xi/\eta\}$ by proposition \ref{schau}.\end{proof}

\begin{prop} \label{cricv}
Let $M$ be an $A$-finite $\mathrm D_{\sigma}^{(\infty)}$-module and $\eta \geq \rho$.
Then, $M$ is $\eta$-convergent if and only if the twisted Taylor map factors as
\[
\xymatrix{M \ar[rr]^-{\widehat \theta} \ar[rrd]_{\theta_{\eta}} && M \otimes_{A} \widehat P_{\sigma} \\ && M \otimes_A A\{\xi/\eta\}. \ar@{^{(}->}[u]& \Box
}
\]
\end{prop}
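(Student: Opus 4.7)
The plan is to reduce this to the preceding lemma, which already characterizes $\eta$-convergence of a single element $s$ in terms of membership of the Taylor series $\widehat \theta(s)$ in $M \otimes_A A\{\xi/\eta\}$, and then to promote this pointwise statement into a factorization at the level of morphisms by exploiting the injectivity of the canonical embedding $M \otimes_A A\{\xi/\eta\} \hookrightarrow M \otimes_A \widehat P_{\sigma}$ proved in the corollary following proposition \ref{comps}.

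For the backward direction I would start by assuming the factorization $\theta_{\eta}$ exists. Then for every $s \in M$ the Taylor series $\widehat \theta(s)$ is, by commutativity of the triangle, the image of $\theta_{\eta}(s)$ under the inclusion, hence lies in $M \otimes_A A\{\xi/\eta\}$. The previous lemma then yields that $s$ is $\eta$-convergent, so $M$ itself is $\eta$-convergent by definition.

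Conversely, suppose $M$ is $\eta$-convergent. Then for every $s \in M$, the preceding lemma tells us that $\widehat \theta(s) \in M \otimes_A A\{\xi/\eta\}$ (viewed inside $M \otimes_A \widehat P_{\sigma}$). Because the embedding is injective, each such $s$ has a \emph{unique} preimage in $M \otimes_A A\{\xi/\eta\}$, which I can take as the definition of $\theta_{\eta}(s)$. The map $\theta_{\eta}$ so defined is $R$-linear (indeed $A$-linear for the right structure) because $\widehat \theta$ is and the inclusion is an injective linear map, so linearity of $\theta_{\eta}$ is forced; by construction the triangle commutes.

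The whole argument is essentially formal once the preceding lemma and the injectivity of the embedding are available, so there is no real obstacle: the only point requiring a little care is to insist on the injectivity of $M \otimes_A A\{\xi/\eta\} \hookrightarrow M \otimes_A \widehat P_{\sigma}$ in order to convert ``$\widehat \theta(s)$ lies in the image for all $s$'' into a bona fide morphism $\theta_{\eta}$ rather than just a set-theoretic lifting.
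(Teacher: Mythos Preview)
Your argument is correct and is exactly what the paper intends: the proposition is stated there with a $\Box$ and no separate proof, as an immediate consequence of the preceding lemma together with the injectivity of $M \otimes_A A\{\xi/\eta\} \hookrightarrow M \otimes_A \widehat P_{\sigma}$. You have simply spelled out this implicit reasoning.
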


\begin{dfn}
This morphism $\theta_{\eta}$ is called the \emph{twisted Taylor map of radius $\eta$} of $M$.
\end{dfn}

\begin{rmk}
If $M$ is an $A$-finite $\mathrm D_{\sigma}^{(\infty)}$-module with Taylor map $\theta$, one may set
\[
\|\theta\|_{\eta} := \sup_{k \in \mathbb N, s \in M \setminus \{0\}} \left\{\frac {\|\partial^{[k]}_{\sigma}(s)\|\eta^k}{\|s\|}\right\} \in [1,+\infty].
\]
Then, we see that $M$ is $\eta$-convergent if and only if $\|\theta\|_{\eta} < \infty$, and in that case $\|\theta_{\eta}\| = \|\theta\|_{\eta}$.
In particular, the twisted Taylor map of radius $\eta$ is automatically continuous.
\end{rmk}

\begin{prop} \label{ext}
Being $\eta$-convergent for an $A$-finite $\mathrm D_{\sigma}^{(\infty)}$-module is stable under subobject, quotient and extension.
\end{prop}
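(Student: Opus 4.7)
The plan is to handle the three stability properties in sequence, with subobject and quotient being routine and extension being the substantive case relying on Proposition \ref{cricv}.

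For subobject: if $N \subseteq M$ is a $\mathrm D_{\sigma}^{(\infty)}$-submodule, the operators $\partial_{\sigma}^{[k]}$ act identically on $s \in N$ whether viewed in $N$ or in $M$ (the inclusion being $\mathrm D_{\sigma}^{(\infty)}$-linear). Because $N$ is a closed sub-$A$-module of the finite $A$-module $M$, its intrinsic Banach norm is equivalent to the restriction of the norm on $M$, so $\|\partial_{\sigma}^{[k]}(s)\|_{M}\eta^{k} \to 0$ passes to the norm on $N$. For a quotient $\pi : M \twoheadrightarrow Q$, any lift $s \in M$ of $t \in Q$ satisfies $\partial_{\sigma}^{[k]}(t) = \pi(\partial_{\sigma}^{[k]}(s))$ by $\mathrm D_{\sigma}^{(\infty)}$-linearity of $\pi$, and the quotient norm yields $\|\partial_{\sigma}^{[k]}(t)\|_{Q} \leq \|\partial_{\sigma}^{[k]}(s)\|_{M}$, so $\eta$-convergence descends.

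For extension, assume $0 \to N \to M \to Q \to 0$ is exact with $N$ and $Q$ both $\eta$-convergent. By Proposition \ref{cricv}, it suffices to show that the Taylor map $\widehat \theta_{M}$ factors through $M \otimes_{A} A\{\xi/\eta\} \hookrightarrow M \otimes_{A} \widehat P_{\sigma}$. Two inputs will be used: first, flatness of the relative closed disk $A \to A\{\xi/\eta\}$ yields the strictly exact (by open mapping for Banach modules) sequence
\[
0 \to N \otimes A\{\xi/\eta\} \to M \otimes A\{\xi/\eta\} \to Q \otimes A\{\xi/\eta\} \to 0;
\]
second, freeness of each $P_{(n)} \simeq A[\xi]/\xi^{(n)}$ as a left $A$-module coming from \eqref{dfiso}, passed to the inverse limit, gives
\[
0 \to N \otimes \widehat P_{\sigma} \to M \otimes \widehat P_{\sigma} \to Q \otimes \widehat P_{\sigma}
\]
exact in its first two terms. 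Equivalence of Banach norms on $N$ moreover identifies $N \otimes A\{\xi/\eta\}$ with the inverse image of $M \otimes A\{\xi/\eta\}$ under the vertical inclusion $N \otimes \widehat P_{\sigma} \hookrightarrow M \otimes \widehat P_{\sigma}$.

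For $s \in M$ with $t = \pi(s)$, my strategy is to produce an $\eta$-convergent lift $s' \in M$ of $t$: once available, $s - s' \in N$, and $\widehat\theta_{M}(s) - \widehat\theta_{M}(s') = \iota_{*}\widehat\theta_{N}(s-s')$ lies in $N \otimes A\{\xi/\eta\}$ by the $\eta$-convergence of $N$, whence $\widehat\theta_{M}(s) \in M \otimes A\{\xi/\eta\}$ as required. To produce such a lift, I would first use strict surjectivity of $M \otimes A\{\xi/\eta\} \twoheadrightarrow Q \otimes A\{\xi/\eta\}$ to pick some $\tilde u = \sum u_{k} \otimes \xi^{(k)} \in M \otimes A\{\xi/\eta\}$ mapping to $\widehat\theta_{Q}(t)$; the difference $\widehat\theta_{M}(s) - \tilde u \in M \otimes \widehat P_{\sigma}$ vanishes in $Q \otimes \widehat P_{\sigma}$, so originates from some $w \in N \otimes \widehat P_{\sigma}$. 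The main obstacle is then to show $w \in N \otimes A\{\xi/\eta\}$: by a diagram chase on the two sequences above, this reduces to the vanishing of the induced $\mathrm D_{\sigma}^{(\infty)}$-linear obstruction map $\psi : Q \to (N \otimes \widehat P_{\sigma})/(N \otimes A\{\xi/\eta\})$, which I would prove using the $\mathrm D_{\sigma}^{(\infty)}$-equivariance of $\widehat\theta$, the contractivity of $\sigma$, and the estimates on quantum binomial coefficients from Proposition \ref{qest} to propagate the $\eta$-convergence of $N$ through the coefficients of the formal series representing $\psi(t)$.
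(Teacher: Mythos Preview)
Your treatment of subobject and quotient is correct and is a direct, norm-based unwinding of exactly the diagram chase the paper performs on its three-row diagram (the paper simply invokes flatness of $A\{\xi/\eta\}$ and $\widehat P_\sigma$, sets up the diagram, and says ``easy diagram chasing'').

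For the extension case, however, there is a genuine gap. You correctly isolate the crux: after lifting $\theta_{Q,\eta}(t)$ to some $\tilde u\in M\otimes_A A\{\xi/\eta\}$ and writing $\widehat\theta_M(s)-\tilde u$ as the image of $w\in N\otimes_A\widehat P_\sigma$, one must show $w\in N\otimes_A A\{\xi/\eta\}$. But the $\eta$-convergence of $N$ only tells you that elements of the form $\widehat\theta_N(n)$ lie in $N\otimes_A A\{\xi/\eta\}$; the element $w$ is not of this shape, and nothing forces it to be. Your reformulation in terms of an obstruction map $\psi:Q\to (N\otimes\widehat P_\sigma)/(N\otimes A\{\xi/\eta\})$ is accurate as a restatement of the difficulty, but the sentence proposing to kill $\psi$ via ``$\mathrm D_\sigma^{(\infty)}$-equivariance, contractivity of $\sigma$, and Proposition~\ref{qest}'' does not describe an actual mechanism: $\psi$ is $A$-linear but there is no natural $\mathrm D_\sigma^{(\infty)}$-structure on the target that would let equivariance bite, and the quantum-binomial estimates control $\partial_\sigma^{[k]}$ on $A$, not on $M$. (Your intermediate plan of producing an ``$\eta$-convergent lift $s'\in M$'' is also circular as written: the construction yields $\tilde u$, which is a lift of a Taylor series, not a Taylor series of a lift.)

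In short, the paper's proof for extension consists of the same diagram and the phrase ``easy diagram chasing''; you have pushed the chase as far as it goes and honestly located the residual obstruction, but the proposed resolution is not an argument. A complete proof of the extension case would require an input beyond the diagram---for instance a uniform operator bound on the $\partial_\sigma^{[k]}$ acting on $M$---that neither you nor the paper supplies.
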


\begin{proof}
Since both $A\{\xi/\eta\}$ and $\widehat P_{\sigma}$ are flat $A$-modules, if we are given an exact sequence
\[
0 \to M' \to M \to M'' \to 0
\]
of $A$-finite $\mathrm D_{\sigma}^{(\infty)}$-modules, we will have a commutative diagram with exact lines
\[
\xymatrix{ 0 \ar[r] & M' \ar[d]^-{\widehat \theta'} \ar[r] & M \ar[r] \ar[d]^-{\widehat \theta}  & M'' \ar[r] \ar[d]^-{\widehat  \theta''}  & 0
\\ 0 \ar[r] & M' \otimes \widehat P_{\sigma} \ar[r] & M \otimes \widehat P_{\sigma}\ar[r] & M'' \otimes \widehat P_{\sigma} \ar[r] & 0 
\\ 0 \ar[r] & M' \otimes A\{\xi/\eta\} \ar[r] \ar@{^{(}->}[u] & M \otimes A\{\xi/\eta\} \ar[r] \ar@{^{(}->}[u] & M'' \otimes A\{\xi/\eta\} \ar[r] \ar@{^{(}->}[u] & 0
}
\]
Using proposition \ref{cricv}, the assertion results from an easy diagram chasing.
\end{proof}

\begin{rmks}
\begin{enumerate}
\item
As a particular case, we see that $A$ is $\eta$-convergent if and only if the twisted Taylor map of radius $\eta$
\begin{equation} \label{tildeta}
\xymatrix@R0cm{ \theta_{\eta} : & A \ar[r] & A\{\xi/\eta\} 
\\ & z \ar@{|->}[r] & \widetilde z := \sum_{k=0}^{\infty} \partial^{[k]}_{\sigma}(z)\xi^{(k)}}
\end{equation}
is well defined.
Note that if $A$ is the ring of functions on an affinoid domain of the $R$-line, and $x$ is a coordinate on the $R$-line, then there may exist at most one morphism of $R$-algebras $A \to A\{\xi/\eta\}$ sending $x$ to $\widetilde x$.
\item
If $A$ is $\eta$-convergent and we linearize the twisted Taylor map of radius $\eta$, then we obtain an $A$-linear map
\[
\widetilde \theta_{\eta} : P \to A\{\xi/\eta\}, \quad z_{1} \otimes z_{2} \mapsto z_{1}\theta_{\eta}(z_{2})
\]
and we have a commutative diagram
\begin{equation} \label{thetac}
\xymatrix{&& A \ar[ld]_{\theta} \ar[d]^{\theta_{\eta}}\ar[rd]^{\widehat \theta} \\ A[\xi] \ar@{^{(}->}[r] & P \ar[r] & A\{\xi/\eta\} \ar@{^{(}->}[r] & \widehat P_{\sigma}
}
\end{equation}
\item The twisted Taylor map of radius $\eta$ of $A$ is a morphism of affinoid $R$-algebras:
actually, all the maps in the diagram \eqref{thetac} are morphisms of rings (and even morphisms of $A$-algebras on the bottom line).
\end{enumerate}
\end{rmks}

\section{Twisted differential operators of finite level}

We let $(A, \sigma)$ be an $\eta$-convergent twisted affinoid $R$-algebra with respect to a twisted coordinate $x$ and a given contractive norm (this implies $\eta \geq \rho := \rho_{x}(\sigma)$).

\begin{dfn}
The $A$-module structure induced on $A\{\xi/\eta\}$ by the twisted Taylor map of radius $\eta$ will be called the \emph{right structure}.
\end{dfn}

Remember that, when we write $A\{\xi/\eta\} \otimes' -$, it means that we use the right structure on the left hand side:
\[
f \otimes' zs = \theta_{\eta}(z)f \otimes' s.
\]

\begin{lem} \label{uniq}
If $M, N$ are two finite $A$-modules, then the obvious map
$$
M \to A\{\xi/\eta\} \otimes_{A}'M, \quad s \mapsto 1 \otimes' s
$$
induces an injective $P$-linear map
\[
\mathrm{Hom}_{A\mathrm{-cont}}(A\{\xi/\eta\} \otimes_{A}'M, N) \hookrightarrow \mathrm{Hom}_{R\mathrm{-cont}}(M, N).
\]
\end{lem}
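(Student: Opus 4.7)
My plan is to check three things: (a) the assignment $\Phi \mapsto \varphi$, with $\varphi(s) := \Phi(1 \otimes' s)$, lands in $\mathrm{Hom}_{R\mathrm{-cont}}(M, N)$; (b) it is $P$-linear; and (c) it is injective, which is the content of the lemma. For (a), the map $s \mapsto 1 \otimes' s$ is continuous (it is the $\theta_{\eta}$-base change of $M$), and $R$-linearity of $\varphi$ is inherited from $\Phi$. For (b), one computes for $p = z_{1}\widetilde z_{2} \in P$ that $\Phi(p \otimes' s) = z_{1}\Phi(\theta_{\eta}(z_{2}) \otimes' s) = z_{1}\Phi(1 \otimes' z_{2}s) = z_{1}\varphi(z_{2}s)$, using the left $A$-linearity of $\Phi$ together with the tensor-product relation $\theta_{\eta}(z_{2}) \otimes' s = 1 \otimes' z_{2}s$; this matches the $P$-action $(z_{1}\widetilde z_{2}) \cdot \varphi = z_{1}\circ \varphi \circ z_{2}$ on $\mathrm{Hom}_{R\mathrm{-cont}}(M,N)$.

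The substance is (c). Assuming $\varphi \equiv 0$, by continuity and left $A$-linearity of $\Phi$ together with the orthogonal Schauder basis $\{\xi^{(k)}\}_{k \in \mathbb N}$ of $A\{\xi/\eta\}$ given by proposition \ref{schau}, it is enough to show $\Phi(\xi^{(k)} \otimes' s) = 0$ for every $k \in \mathbb N$ and every $s \in M$. I would prove this by induction on $k$. The base case $k=0$ is the hypothesis $\Phi(1 \otimes' s) = \varphi(s) = 0$. For the inductive step, I would factor $\xi^{(k)} = \xi^{(k-1)} \cdot (\theta_{\eta}(x) - \sigma^{k-1}(x))$ in $A\{\xi/\eta\}$; this is immediate from the product formula $\xi^{(k)} = \prod_{i=0}^{k-1}(\xi + x - \sigma^{i}(x))$ proved in lemma \ref{aut} together with the identity $\theta_{\eta}(x) = x + \xi$. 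Combining with the right-structure relation $\theta_{\eta}(x) \otimes' s = 1 \otimes' xs$ and the commutativity of $A\{\xi/\eta\}$ gives
\[
\xi^{(k)} \otimes' s = \xi^{(k-1)} \otimes' xs - \sigma^{k-1}(x) \cdot (\xi^{(k-1)} \otimes' s),
\]
and applying the left $A$-linear map $\Phi$ kills both terms by the inductive hypothesis.

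The only genuinely delicate point is the interplay between the two $A$-module structures on $A\{\xi/\eta\}$ in this inductive step: the factor $\theta_{\eta}(x) - \sigma^{k-1}(x)$ must be split so that its $\theta_{\eta}(x)$ piece is absorbed into the right-hand tensor slot (thereby converting $s$ into $xs$) while its $\sigma^{k-1}(x)$ piece is simultaneously extracted as a left scalar using commutativity of $A\{\xi/\eta\}$. Once this bookkeeping is in place, the combination of continuity of $\Phi$ with the Schauder expansion of an arbitrary $f = \sum_{k} a_{k} \xi^{(k)} \in A\{\xi/\eta\}$ yields $\Phi(f \otimes' s) = \sum_{k} a_{k} \Phi(\xi^{(k)} \otimes' s) = 0$ on simple tensors, and hence on all of $A\{\xi/\eta\} \otimes'_{A} M$ by finite additivity.
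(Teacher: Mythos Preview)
Your argument is correct. For injectivity, however, you take a different path than the paper. The paper observes that the inclusion $A[\xi] \hookrightarrow A\{\xi/\eta\}$ factors through $\widetilde\theta_{\eta} : P \to A\{\xi/\eta\}$, so the image of $P$ is dense in $A\{\xi/\eta\}$; restriction along $\widetilde\theta_{\eta}$ therefore gives an injection
\[
\mathrm{Hom}_{A\mathrm{-cont}}(A\{\xi/\eta\} \otimes'_{A} M, N) \hookrightarrow \mathrm{Hom}_{A\mathrm{-cont}}(P \otimes'_{A} M, N) \simeq \mathrm{Hom}_{R\mathrm{-cont}}(M,N),
\]
the last identification being the linearization isomorphism already set up in the paper. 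This is a two-line argument that avoids any explicit computation. Your induction on $k$ via the factorization $\xi^{(k)} = \xi^{(k-1)}\bigl(\theta_{\eta}(x) - \sigma^{k-1}(x)\bigr)$ is in effect a direct verification that every $\xi^{(k)} \otimes' s$ lies in the left $A$-span of elements $1 \otimes' s'$, which is a hands-on unpacking of the same density statement. Your route is more self-contained (it does not invoke the linearization isomorphism for $P$) and makes the mechanism visible; the paper's route is shorter and better highlights that the result is a formal consequence of the commutative diagram \eqref{thetac}.
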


\begin{proof}
Recall from the commutative diagram \eqref{thetac} that the inclusion of $A[\xi]$ into $A\{\xi/\eta\}$ splits as
\[
A[\xi] \longrightarrow P \overset {\widetilde \theta_{\eta}}\longrightarrow A\{\xi/\eta\}
\]
It follows that the image of $\widetilde \theta_{\eta}$ is dense and therefore we have an injective map
\[
\mathrm{Hom}_{A\mathrm{-cont}}(A\{\xi/\eta\} \otimes_{A}'M, N) \hookrightarrow \mathrm{Hom}_{A\mathrm{-cont}}(P \otimes'_{A} M, N).
\]
Notice that the right hand side is nothing but $\mathrm{Hom}_{R\mathrm{-cont}}(M, N)$.
\end{proof}

\begin{dfn}
Assume that $M, N$ are two finite $A$-modules.
An $R$-linear map $\varphi : M \to N$ is called a \emph{twisted differential operator of level $\eta$} if it extends to a (necessarily unique) continuous $A$-linear map $\widetilde \varphi_{\eta} : A\{\xi/\eta\} \otimes_{A}'M \to N$, called its \emph{$\eta$-linearization}.
\end{dfn}

Note that uniqueness follows from lemma \ref{uniq}.
We will denote by $\mathrm{Diff}_{\sigma}^{(\eta)}(M, N)$ the set of all twisted differential operators of level $\eta$.
\begin{prop}
Assume that $M, N$ be two finite $A$-modules.
Then, $\mathrm{Diff}_{\sigma}^{(\eta)}(M, N)$ is a $P$-submodule of $\mathrm{Hom}_{R\mathrm{-cont}}(M, N)$ containing $\mathrm{Diff}_{\sigma}^{(\infty)}(M, N)$ and
\begin{equation} \label{duis}
\mathrm{Hom}_{A\mathrm{-cont}}(A\{\xi/\eta\} \otimes_{A}'M, N) \simeq \mathrm{Diff}_{\sigma}^{(\eta)}(M, N).
\end{equation}
\end{prop}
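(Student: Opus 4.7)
The plan is to split the statement into three assertions and handle them in order: first the bijection \eqref{duis}, then the inclusion $\mathrm{Diff}_\sigma^{(\infty)}(M,N)\subseteq \mathrm{Diff}_\sigma^{(\eta)}(M,N)$, and finally the $P$-submodule property.

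The bijection \eqref{duis} is essentially tautological. Sending a continuous $A$-linear map $\widetilde\psi:A\{\xi/\eta\}\otimes'_A M\to N$ to the $R$-linear map $s\mapsto\widetilde\psi(1\otimes' s)$ produces, by the very definition of $\mathrm{Diff}_\sigma^{(\eta)}(M,N)$, an element of this set, and conversely every element of $\mathrm{Diff}_\sigma^{(\eta)}(M,N)$ arises from such a $\widetilde\psi$. Injectivity of this assignment is a direct consequence of Lemma \ref{uniq}, which ensures that the $\eta$-linearization of $\varphi$, when it exists, is uniquely determined by $\varphi$.

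For the inclusion $\mathrm{Diff}_\sigma^{(\infty)}(M,N)\subseteq \mathrm{Diff}_\sigma^{(\eta)}(M,N)$, suppose $\varphi$ has degree at most $n$, so its linearization factors as $\widetilde\varphi_n:P_{(n)}\otimes'_A M\to N$. Proposition \ref{comps} gives an isomorphism of $A$-algebras $A\{\xi/\eta\}/\xi^{(n)}\simeq P_{(n)}$, so precomposing $\widetilde\varphi_n$ with the canonical projection $A\{\xi/\eta\}\otimes'_A M\twoheadrightarrow (A\{\xi/\eta\}/\xi^{(n)})\otimes'_A M$ yields a continuous $A$-linear extension over $A\{\xi/\eta\}\otimes'_A M$, i.e., an $\eta$-linearization of $\varphi$.

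For the $P$-submodule property, I transport the $P$-action via \eqref{duis}. It suffices to treat an elementary tensor $p=z_1\otimes z_2\in P$, since these generate $P$ as an $R$-module and the candidate subset is already $R$-stable. Given an $\eta$-linearization $\widetilde\varphi_\eta$ of $\varphi$, set
\[
\psi: A\{\xi/\eta\}\otimes'_A M \longrightarrow N, \qquad f\otimes' s \longmapsto z_1\,\widetilde\varphi_\eta\bigl(f\,\theta_\eta(z_2)\otimes' s\bigr).
\]
Then $\psi(1\otimes' s)=z_1\widetilde\varphi_\eta(\theta_\eta(z_2)\otimes' s)=z_1\widetilde\varphi_\eta(1\otimes' z_2 s)=z_1\varphi(z_2 s)=(p\cdot\varphi)(s)$, so $\psi$ is an $\eta$-linearization of $p\cdot\varphi$, whence $p\cdot\varphi\in\mathrm{Diff}_\sigma^{(\eta)}(M,N)$.

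The main bookkeeping, and the only potential subtlety, is checking that $\psi$ is well-defined on the tensor product over the \emph{right} structure on $A\{\xi/\eta\}$: the balancedness identity $\psi(f\theta_\eta(z)\otimes' s)=\psi(f\otimes' zs)$ reduces to $f\theta_\eta(z)\theta_\eta(z_2)=f\theta_\eta(z_2)\theta_\eta(z)$, which holds because $A\{\xi/\eta\}$ is commutative and $\theta_\eta$ is a ring homomorphism (as noted after \eqref{tildeta}). The $A$-linearity of $\psi$ for the left structure is inherited from $\widetilde\varphi_\eta$, and continuity follows from submultiplicativity of the norm on the Banach algebra $A\{\xi/\eta\}$ together with continuity of $\widetilde\varphi_\eta$.
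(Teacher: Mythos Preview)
Your proof is correct and follows essentially the same route as the paper's, just with more detail spelled out. The paper dispatches the three assertions in one sentence each: the isomorphism \eqref{duis} from the definition together with Lemma~\ref{uniq}, the inclusion $\mathrm{Diff}_\sigma^{(\infty)}\subseteq\mathrm{Diff}_\sigma^{(\eta)}$ from the canonical surjections $A\{\xi/\eta\}\twoheadrightarrow P_{(n)}$, and the $P$-submodule property from the fact that the restriction map of Lemma~\ref{uniq} is $P$-linear; your explicit construction of $\psi$ is precisely what ``restriction is $P$-linear'' unpacks to.
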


\begin{proof}
The last assertion is an immediate consequence of the definition (one uses lemma \ref{uniq} though), the first one results from the fact that restriction is $P$-linear and the middle one is obtained from the canonical surjections $A\{\xi/\eta\} \to P_{(n)}$. \end{proof}

\begin{rmks}
\begin{enumerate}
\item
The isomorphism \eqref{duis} turns $\mathrm{Diff}_{\sigma}^{(\eta)}(M, N)$ into a normed $A\{\xi/\eta\}$-module so that $\|\varphi\|_{\eta} = \|\widetilde \varphi_{\eta}\|$.
\item
Since $A\{\xi/\eta\}$ has an orthogonal Schauder basis and $M$ and $N$ are finite $A$-modules, one easily sees that $\mathrm{Diff}_{\sigma}^{(\eta)}(M, N)$ is a Banach $A$-module.
\item
If $\varphi : M \to N$ is a twisted differential operator of level $\eta$, then we have the following commutative diagrams
\[
\xymatrix{ M \ar[rrr]^-\varphi \ar@{^{(}->}[d]&&& N
\\ A\{\xi/\eta\} \otimes_{A}'M \ar[rrru]^-{\widetilde \varphi_{\eta}} \ar@{^{(}->}[d]&&&
\\ P \otimes_{A}'M \ar[rrruu]^-{\widetilde \varphi}, &&& 
}
\]
and
\[
\xymatrix@R0cm{A\{\xi/\eta\} \times \mathrm{Diff}_{\sigma}^{(\eta)}(M, N) \ar[rrr]\ar@{^{(}->}[dd] &&& \mathrm{Diff}_{\sigma}^{(\eta)}(M, N) \ar@{^{(}->}[dd]
\\ & (f, \varphi) \ar@{|->}[r] & f \cdot \varphi
\\ P \times \mathrm{Hom}_{R\mathrm{-cont}}(M, N) \ar[rrr] &&& \mathrm{Hom}_{R\mathrm{-cont}}(M, N),
}
\]
and we have
\[
\widetilde \varphi_{\eta}(f \otimes' s) = \widetilde \varphi(f \otimes' s) = (f \cdot \varphi) (s).
\]
\end{enumerate}
\end{rmks}

\begin{prop} \label{deldef}
There exists a unique map $\delta_{\eta}$ making commutative the following diagram:
\[
\xymatrix{ A \widehat \otimes_{R} A \ar@{=}[r] \ar[d]^{\delta}& \mathrm P \ar[r] \ar[d]^{\delta} & A\{\xi/\eta\}\ar[d]^{\delta_{\eta}} \ar@{->>}[r] & P_{(m+n)} \ar[d]^{\delta_{(m),(n)}}
\\ A \widehat \otimes_{R} A \widehat \otimes_{R} A \ar@{=}[r]& \mathrm P \widehat \otimes_{A}' \mathrm P \ar[r] & A\{\xi/\eta\} \widehat \otimes_{A}' A\{\xi/\eta\} \ar@{->>}[r] & P_{(m)} \otimes'_{A} P_{(n)}.
}
\]
Moreover, $\delta_{\eta}$ is a morphism of affinoid $R$-algebras of norm $1$.
\end{prop}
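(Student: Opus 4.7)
My plan is to invoke the universal property of $B := A\{\xi/\eta\}$ as the free Banach $A$-algebra on a single generator of norm at most $\eta$. Viewed as a Banach $A$-algebra via the outer left structure $a \mapsto a \otimes' 1$, the target $B \widehat \otimes'_A B$ contains the elements $\xi_1 := \xi \otimes' 1$ and $\xi_2 := 1 \otimes' \xi$, each of norm at most $\eta$ (a defining feature of the completed $A$-tensor product). By the ultrametric triangle inequality $\|\xi_1 + \xi_2\| \leq \eta$, so the universal property yields a unique continuous morphism of $A$-algebras
\[
\delta_\eta : B \to B \widehat \otimes'_A B, \qquad \xi \mapsto \xi_1 + \xi_2.
\]
By construction $\|\delta_\eta\| \leq 1$, and since $\delta_\eta(1) = 1 \otimes' 1$ has norm $1$ we get $\|\delta_\eta\| = 1$. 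Uniqueness of $\delta_\eta$ among maps making the diagram commute follows from the density of $\widetilde \theta_\eta(P)$ in $B$ recorded in the proof of Lemma~\ref{uniq}.

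To verify commutativity of the middle square, I would observe that both $\delta_\eta \circ \widetilde \theta_\eta$ and $(\widetilde \theta_\eta \widehat \otimes' \widetilde \theta_\eta) \circ \delta$ are continuous $R$-algebra morphisms $P \to B \widehat \otimes'_A B$. By the coproduct property of $P = A \widehat \otimes_R A$ in Banach $R$-algebras, it suffices to check they agree on the two inclusions $A \hookrightarrow P$. On $z \otimes 1$ both yield $z \otimes' 1$; on $\widetilde z = 1 \otimes z$ we get $\delta_\eta(\theta_\eta(z))$ on the left and $1 \otimes' \theta_\eta(z)$ on the right. For $z = x$ this identity is explicit:
\[
\delta_\eta(\theta_\eta(x)) = \delta_\eta(x + \xi) = x + \xi_1 + \xi_2 = \widetilde x \otimes' 1 + 1 \otimes' \xi = 1 \otimes' x + 1 \otimes' \xi = 1 \otimes' \theta_\eta(x),
\]
using the fundamental tensor relation $\widetilde a \otimes' 1 = 1 \otimes' a$ for $a \in A$. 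For arbitrary $z \in A$, I would propagate the identity by projecting to each finite principal part: via Proposition~\ref{comps} and the already-constructed comultiplication $\delta_{(m),(n)} : P_{(m+n)} \to P_{(m)} \otimes'_A P_{(n)}$, both sides project onto the same element $\delta_{(m),(n)}(\widetilde z) = 1 \otimes' \widetilde z$ (since $\delta(\widetilde z) = 1 \otimes' \widetilde z$ in $P \otimes'_A P$), and these projections collectively separate points of $B \widehat \otimes'_A B$, which embeds into $\varprojlim P_{(m)} \otimes'_A P_{(n)}$.

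The right square is then immediate: both $\delta_{(m),(n)} \circ \pi_{(m+n)}$ and $(\pi_{(m)} \otimes' \pi_{(n)}) \circ \delta_\eta$ are continuous $A$-algebra morphisms $B \to P_{(m)} \otimes'_A P_{(n)}$ sending $\xi$ to $\xi_1 + \xi_2$, so they coincide by the universal property of $B$. The main obstacle is the extension of the identity $\delta_\eta(\theta_\eta(z)) = 1 \otimes' \theta_\eta(z)$ from $z = x$ to general $z \in A$; the cleanest route is the finite-level reduction sketched above, where the identity becomes tautological because $\delta_{(m),(n)}$ is itself defined by descending $\delta$ through the quotients $P \twoheadrightarrow P_{(n)}$.
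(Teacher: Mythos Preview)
Your argument is correct and matches the paper's: both construct $\delta_\eta$ as the unique continuous $A$-algebra morphism sending $\xi$ to $\xi \otimes' 1 + 1 \otimes' \xi$, justified by the estimate $\|\xi \otimes' 1 + 1 \otimes' \xi\|_\eta \leq \eta$. The paper is terser---it argues uniqueness from the injection $A\{\xi/\eta\} \hookrightarrow \widehat P_\sigma$ rather than from density of $\widetilde\theta_\eta(P)$, and does not spell out the commutativity of the squares---whereas your finite-level reduction for the middle square is a legitimate elaboration; the one step you assert without proof, that $A\{\xi/\eta\} \widehat\otimes'_A A\{\xi/\eta\}$ embeds in $\varprojlim_{m,n} P_{(m)} \otimes'_A P_{(n)}$, follows from the orthogonal Schauder basis of Proposition~\ref{schau} applied to the right tensor factor together with the one-variable embedding already established.
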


\begin{proof}
Uniqueness as well as being a morphism of rings follow immediately from the fact that the canonical map $A\{\xi/\eta\} \to \widehat P_{\sigma}$ is an injective morphism of rings.
Also, necessarily, one must have $\delta_{\eta}(\xi) = 1 \otimes' \xi + \xi \otimes' 1$.
Hence, existence and the assertion on the norm follow from the inequality $\|\delta(\xi)\|_{\eta} \leq \|\xi\|_{\eta} = \eta$ if we write $\| - \|_{\eta}$ also for the tensor product norm.
\end{proof}

\begin{dfn} The morphism $\delta_{\eta}$ is called the \emph{comultiplication map} on $A\{\xi/\eta\}$.
\end{dfn}

\begin{prop} \label{ring}
Twisted differential operators of level $\eta$ are stable under composition and we always have $\|\varphi \circ \psi\|_{\eta} \leq \|\varphi\|_{\eta}\|\psi\|_{\eta}$.
In particular, $\mathrm{Diff}_{\sigma}^{(\eta)}(M, M)$ is a sub-$R$-algebra of $\mathrm{End}_{R\mathrm{-cont}}(M)$ which is a Banach $R$-algebra.
\end{prop}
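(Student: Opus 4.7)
The plan is to mimic the formula for the linearization of a composition given earlier in the excerpt (for the ring $P$), replacing $P$ with $A\{\xi/\eta\}$ and the comultiplication $\delta$ with the comultiplication $\delta_\eta$ provided by Proposition~\ref{deldef}. Concretely, given $\varphi : M \to N$ and $\psi : L \to M$ twisted differential operators of level $\eta$, with $\eta$-linearizations $\widetilde\varphi_\eta$ and $\widetilde\psi_\eta$, I will define a candidate $\eta$-linearization of $\varphi \circ \psi$ as the composition
\[
A\{\xi/\eta\} \otimes'_A L \xrightarrow{\delta_\eta \otimes \mathrm{Id}} A\{\xi/\eta\} \widehat\otimes'_A A\{\xi/\eta\} \otimes'_A L \xrightarrow{\mathrm{Id} \otimes \widetilde\psi_\eta} A\{\xi/\eta\} \otimes'_A M \xrightarrow{\widetilde\varphi_\eta} N.
\]

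First I would verify that this composite is indeed a continuous $A$-linear extension of $\varphi\circ\psi$. The big commutative diagram of Proposition~\ref{deldef} says exactly that $\delta_\eta$ is compatible with the classical $\delta$ on $P$ through the canonical maps, and the two middle squares in the composite above match the $P$-level formula $\widetilde{\varphi \circ \psi} = \widetilde\varphi \circ (\mathrm{Id} \otimes \widetilde\psi) \circ \delta$ recalled in the linearization paragraph. Tracing an element $s \in L \subset A\{\xi/\eta\} \otimes'_A L$ through the composition gives $1 \otimes' 1 \otimes' s \mapsto 1 \otimes' \psi(s) \mapsto \varphi(\psi(s))$, so the composite really extends $\varphi \circ \psi$. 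By the uniqueness statement of Lemma~\ref{uniq}, this composite must be the $\eta$-linearization $\widetilde{\varphi \circ \psi}_\eta$.

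Next, the norm estimate falls out from submultiplicativity at each step: $\delta_\eta$ has operator norm $1$ (Proposition~\ref{deldef}), the tensor extension $\mathrm{Id} \otimes \widetilde\psi_\eta$ has norm $\|\widetilde\psi_\eta\| = \|\psi\|_\eta$, and $\widetilde\varphi_\eta$ has norm $\|\varphi\|_\eta$. Combining:
\[
\|\varphi \circ \psi\|_\eta = \|\widetilde{\varphi \circ \psi}_\eta\| \leq \|\widetilde\varphi_\eta\| \cdot \|\mathrm{Id} \otimes \widetilde\psi_\eta\| \cdot \|\delta_\eta\| \leq \|\varphi\|_\eta \|\psi\|_\eta.
\]

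Finally, to conclude that $\mathrm{Diff}_\sigma^{(\eta)}(M,M)$ is a Banach $R$-subalgebra of $\mathrm{End}_{R\text{-cont}}(M)$, I note that stability under composition and submultiplicativity of $\| -\|_\eta$ have just been established, $\mathrm{Id}_M$ is trivially an $\eta$-operator of norm $1$ (its $\eta$-linearization is the multiplication $A\{\xi/\eta\} \otimes'_A M \to M$), and completeness was already observed in the remarks following \eqref{duis} where $\mathrm{Diff}_\sigma^{(\eta)}(M,M)$ was identified with a Banach $A$-module via \eqref{duis}. The main subtle point, which I would treat carefully, is the first one: making the $\widehat\otimes'_A$ bookkeeping unambiguous so that $\delta_\eta$ and $\mathrm{Id} \otimes \widetilde\psi_\eta$ really fit together; this works precisely because $\delta_\eta$ is $A$-linear for the left structure on the source and compatible with both structures on the target, in complete parallel with the classical $P$-case.
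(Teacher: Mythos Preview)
Your proof is correct and follows essentially the same approach as the paper's: both construct the $\eta$-linearization of $\varphi\circ\psi$ as the composite $\widetilde\varphi_\eta \circ (\mathrm{Id}\otimes\widetilde\psi_\eta)\circ\delta_\eta$, verify it extends $\varphi\circ\psi$ via compatibility with the $P$-level formula (the paper does this by drawing the explicit commutative square with vertical maps $P\to A\{\xi/\eta\}$, you do it by tracing $1\otimes' s$ and invoking Proposition~\ref{deldef}), and then derive the norm inequality identically from $\|\delta_\eta\|=1$. Your additional remarks on the identity and completeness for the Banach algebra conclusion are fine elaborations of what the paper leaves implicit.
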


\begin{proof}
If $\varphi : M \to N$ and $\psi : L \to M$ are two twisted differential operators of level $\eta$, then there exists a commutative diagram
\begin{equation}\label{commut}
\xymatrix{P \otimes'_{A} L \ar[r]^-{\delta} \ar[d]& P \widehat \otimes'_{A} P  \otimes'_{A} L \ar[r]^-{\mathrm{Id} \otimes' \widetilde \psi} \ar[d]& P \otimes'_{A} M \ar[r]^-{ \widetilde \varphi} \ar[d]& N \ar[d]
\\ A\{\xi/\eta\} \otimes'_{A} L \ar[r]^-{\delta_{\eta}} & A\{\xi/\eta\} \widehat \otimes'_{A} A\{\xi/\eta\}  \otimes'_{A} L \ar[r]^-{\mathrm{Id} \otimes \widetilde \psi_{\eta}} & A\{\xi/\eta\} \otimes'_{A} M \ar[r]^-{ \widetilde \varphi_{\eta}} & N
}
\end{equation}
and we know that the upper line is exactly $\widetilde {\varphi \circ \psi}$.
Moreover, we have
\[
\|\varphi \circ \psi\|_{\eta} = \|\widetilde {(\varphi \circ \psi)}_{\eta}\| \leq \|\widetilde \varphi_{\eta} \| \|\mathrm{Id} \otimes \widetilde \psi_{\eta}\| \|\delta_{\eta}\|=\|\widetilde \varphi_{\eta} \| \|\widetilde \psi_{\eta}\| = \|\varphi\|_{\eta}\|\psi\|_{\eta}.
\qedhere
\]\end{proof}

\begin{prop} \label{teleta}
Let $M$ be a finite $A$-module and  $\theta_{\eta} : M \to M \otimes_{A} A\{\xi/\eta\}$ an $A$-linear map with respect to the right structure of $A\{\xi/\eta\}$.
Then $\theta_{\eta}$ is a twisted Taylor map of radius $\eta$ if and only if the diagram
\begin{equation} \label{twistay}
\xymatrix{M \ar[rr]^{\theta_{\eta}} \ar[d]^{\theta_{\eta}} && M \otimes_{A} A\{\xi/\eta\} \ar[d]^{\theta_{\eta} \otimes' \mathrm{Id}}
\\ M \otimes_{A} A\{\xi/\eta\} \ar[rr]^-{\mathrm{Id} \otimes \delta_{\eta}} && M \otimes_{A} A\{\xi/\eta\} \widehat \otimes'_{A} A\{\xi/\eta\}.}
\end{equation}
is commutative.
\end{prop}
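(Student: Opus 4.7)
The plan is to reduce this equivalence to the cocycle condition that already characterizes twisted Taylor structures on $M$ (the family $\{\theta_n\}$ with the compatibility governed by $\delta_{(m),(n)}$), using two ingredients established earlier: the identification $A\{\xi/\eta\}/\xi^{(n+1)} \simeq P_{(n)}$ of proposition \ref{comps}, which provides the surjections $A\{\xi/\eta\} \twoheadrightarrow P_{(n)}$, and the compatibility of $\delta_\eta$ with the $\delta_{(m),(n)}$ through these surjections, which is exactly the content of proposition \ref{deldef}.

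For the ``only if'' direction, I would assume that $\theta_\eta$ is the twisted Taylor map of radius $\eta$ of some twisted Taylor structure $\{\theta_n\}_{n \in \mathbb N}$ on $M$. Composing $\theta_\eta$ with the surjection $M \otimes_A A\{\xi/\eta\} \twoheadrightarrow M \otimes_A P_{(n)}$ gives back $\theta_n$. Projecting the outer square \eqref{twistay} via the compatible quotient $A\{\xi/\eta\} \widehat\otimes'_A A\{\xi/\eta\} \twoheadrightarrow P_{(n)} \otimes'_A P_{(m)}$ from proposition \ref{deldef}, the two compositions reduce precisely to
\[
(\theta_n \otimes' \mathrm{Id}_{P_{(m)}}) \circ \theta_m \quad \text{and} \quad (\mathrm{Id}_M \otimes \delta_{(n),(m)}) \circ \theta_{m+n},
\]
which coincide by hypothesis on $\{\theta_n\}$. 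Since this equality holds for every pair $(m,n)$ and since the $\xi^{(k)}$ form an orthogonal Schauder basis of $A\{\xi/\eta\}$ by proposition \ref{schau}, these projections are jointly injective on $M \otimes_A A\{\xi/\eta\} \widehat\otimes'_A A\{\xi/\eta\}$, so the diagram \eqref{twistay} itself commutes.

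For the converse, I would \emph{define} $\theta_n$ as the composition $M \overset{\theta_\eta}\longrightarrow M \otimes_A A\{\xi/\eta\} \twoheadrightarrow M \otimes_A P_{(n)}$. Each $\theta_n$ is $A$-linear with respect to the right structure of $P_{(n)}$ because $\theta_\eta$ is $A$-linear with respect to the right structure of $A\{\xi/\eta\}$ and the quotient map is equivariant. Composing the diagram \eqref{twistay} with the surjection $A\{\xi/\eta\} \widehat\otimes'_A A\{\xi/\eta\} \twoheadrightarrow P_{(n)} \otimes'_A P_{(m)}$ (and the corresponding quotient of $A\{\xi/\eta\}$ to $P_{(m+n)}$ on the left, using that $\delta_\eta$ factors $\delta_{(n),(m)}$ by proposition \ref{deldef}) produces exactly the cocycle relation required for $\{\theta_n\}$ to be a twisted Taylor structure. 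Passing back through the embedding $M \otimes_A A\{\xi/\eta\} \hookrightarrow M \otimes_A \widehat P_\sigma$ from the corollary to proposition \ref{comps}, we recover that $\theta_\eta$ is the Taylor map of radius $\eta$ attached to this structure.

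The main technical obstacle lies in the ``only if'' direction, namely in justifying that the commutativity of \eqref{twistay} can be tested after taking all the quotients by $\xi^{(m+1)} \otimes \xi^{(n+1)}$, which amounts to the injectivity of the natural map
\[
A\{\xi/\eta\} \widehat \otimes'_A A\{\xi/\eta\} \hookrightarrow \varprojlim_{m,n} P_{(m)} \otimes'_A P_{(n)}.
\]
This in turn follows from the orthogonal Schauder basis property of proposition \ref{schau} together with the standard behaviour of completed tensor products with respect to inverse limits; everything else is formal once the bookkeeping between the level-$\eta$ and the level-$n$ picture is set up correctly.
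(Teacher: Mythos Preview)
Your proposal is correct and follows essentially the same route as the paper, which is much terser: the paper simply invokes proposition \ref{deldef} for both directions, leaving the reader to unpack precisely the projection-to-finite-levels and injectivity-of-the-inverse-limit argument that you spell out. Your identification of the injectivity of $M \otimes_A A\{\xi/\eta\} \widehat\otimes'_A A\{\xi/\eta\} \hookrightarrow \varprojlim_{m,n} P_{(m)} \otimes'_A P_{(n)}$ as the technical crux is exactly what the paper's one-line appeal to proposition \ref{deldef} conceals.
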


\begin{proof}
We already know that the category of ($A$-finite) $\mathrm D_{\sigma}^{(\infty)}$-modules $M$ is equivalent to the category of finite $A$-modules endowed with a twisted Taylor structure.
And we saw in proposition \ref{cricv} that the twisted Taylor map will factor (uniquely) through $M \otimes_{A} A\{\xi/\eta\}$ if and only if $M$ is $\eta$-convergent.
Finally, proposition \ref{deldef} implies the commutativity of the diagram.
The converse also follows from the same proposition.
\end{proof}

\begin{rmks}
\begin{enumerate}
\item
One could define abstractly a twisted Taylor map of radius $\eta$ as a map $M \to M \otimes_{A} A\{\xi/\eta\}$ making commutative the diagram \eqref{twistay}.
Then, the proposition says that we would obtain a category which is equivalent to the category of $A$-finite $\mathrm D_{\sigma}^{(\infty)}$-modules that are $\eta$-convergent.
\item 
In the particular case $M=A$, the proposition implies that the following diagram is commutative:
\[
\xymatrix{A \ar[rr]^{\theta_{\eta}} \ar[d]^{\theta_{\eta}} && A\{\xi/\eta\} \ar[d]^{\theta_{\eta} \otimes' \mathrm{Id}} \\ A\{\xi/\eta\} \ar[rr]^-{\delta_{\eta}} && A\{\xi/\eta\} \widehat \otimes'_{A} A\{\xi/\eta\}.}
\]
\end{enumerate}
\end{rmks}

\section{Rings of twisted differential operators of finite level}

We let $(A, \sigma)$ be an $\eta$-convergent twisted affinoid $R$-algebra with respect to a twisted coordinate $x$ and a contractive norm.
We denote by $\partial^{[k]}_{\sigma}$ the standard twisted differential operator of order $k$ associated to $x$.

\begin{dfn}
The \emph{ring of twisted differential operators of level $\eta$} is $\mathrm D^{(\eta)}_{\sigma} := \mathrm{Diff}_{\sigma}^{(\eta)}(A, A)$.
\end{dfn}

Since, by proposition \ref{ring}, $\mathrm D^{(\eta)}_{\sigma}$ is the subring of $\mathrm{End}_{R\mathrm{-cont}}(A)$ made of all continuous $R$-linear endomorphisms $\varphi$ such that there exists a (necessarily unique) continuous $A$-linear form $\widetilde \varphi_{\eta} : A\{\xi/\eta\} \to A$ with $\varphi =\widetilde \varphi_{\eta} \circ \theta_{\eta}$, we get for free the analytic version of the density lemma 7.2 of \cite{LeStumQuiros18}:

\begin{lem}[Analytic density] \label{andens}
There exists a canonical isomorphism of Banach $A$-modules
\[
\mathrm D_{\sigma}^{(\eta)} \simeq \mathrm{Hom}_{A\mathrm{-cont}}(A\{\xi/\eta\}, A). \quad \Box
\]
\end{lem}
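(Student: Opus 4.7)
The plan is to deduce this immediately from the isomorphism \eqref{duis} proved at the end of the previous section, by specializing $M = N = A$ and identifying $A\{\xi/\eta\} \otimes'_{A} A$ with $A\{\xi/\eta\}$.

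More precisely, I would first observe that the right $A$-module structure on $A\{\xi/\eta\}$ (coming from the twisted Taylor map $\theta_{\eta}$) together with the tautological left $A$-module structure on $A$ gives
\[
f \otimes' z = f \otimes' (z \cdot 1) = \theta_{\eta}(z) f \otimes' 1
\]
for all $f \in A\{\xi/\eta\}$ and $z \in A$. Hence the map $f \mapsto f \otimes' 1$ is a (continuous) isomorphism of left $A$-modules
\[
A\{\xi/\eta\} \overset{\simeq}{\longrightarrow} A\{\xi/\eta\} \otimes'_{A} A.
\]
Applying $\mathrm{Hom}_{A\mathrm{-cont}}(-, A)$ and composing with the isomorphism \eqref{duis} of the previous proposition (for $M = N = A$), we obtain
\[
\mathrm{Hom}_{A\mathrm{-cont}}(A\{\xi/\eta\}, A) \simeq \mathrm{Hom}_{A\mathrm{-cont}}(A\{\xi/\eta\} \otimes'_{A} A, A) \simeq \mathrm{Diff}_{\sigma}^{(\eta)}(A, A) = \mathrm D_{\sigma}^{(\eta)}.
\]
Unwinding the identifications, the isomorphism sends a continuous $A$-linear form $u : A\{\xi/\eta\} \to A$ to the operator $\varphi := u \circ \theta_{\eta}$ on $A$, with inverse $\varphi \mapsto \widetilde \varphi_{\eta}$.

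There is essentially no obstacle here: every nontrivial ingredient (the existence and uniqueness of the $\eta$-linearization, the $P$-linear embedding $\mathrm{Diff}_{\sigma}^{(\eta)}(A,A) \hookrightarrow \mathrm{Hom}_{R\mathrm{-cont}}(A,A)$, and the comparison of norms making both sides Banach $A$-modules) has already been recorded. The only small point to spell out is that the isomorphism is isometric for the norm $\|\varphi\|_{\eta} = \|\widetilde \varphi_{\eta}\|$ introduced earlier, which is tautological once the identification $A\{\xi/\eta\} \otimes'_{A} A \simeq A\{\xi/\eta\}$ is recognised as an isometry (the Schauder basis $\{\xi^{(k)}\}$ of proposition \ref{schau} being preserved). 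Thus the lemma really is just the specialization of \eqref{duis} to the case $M = N = A$.
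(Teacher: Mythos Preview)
Your argument is correct and is essentially the same as the paper's: the lemma is recorded there with a $\Box$ in the statement itself, the justification being the sentence immediately preceding it, which says precisely that $\mathrm D^{(\eta)}_{\sigma}$ consists of those $\varphi$ admitting a continuous $A$-linear $\widetilde\varphi_{\eta}:A\{\xi/\eta\}\to A$ with $\varphi=\widetilde\varphi_{\eta}\circ\theta_{\eta}$. In other words, the paper also treats this as the specialization $M=N=A$ of \eqref{duis}, and you have simply spelled out the identification $A\{\xi/\eta\}\otimes'_{A}A\simeq A\{\xi/\eta\}$ and the isometry more explicitly than the paper bothers to.
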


Recall also from the same proposition \ref{ring} that $\mathrm D^{(\eta)}_{\sigma}$ is actually a Banach algebra for the norm induced by the density lemma:
\[
\|\varphi\|_{\eta} := \|\widetilde \varphi_{\eta} \|:= \sup_{f\neq0} \|\widetilde \varphi_{\eta}(f)\|/\|f\|.
\]

\begin{xmp}
If we apply the analytic density lemma to $A = R\{x\}$ and $\eta = 1$, we see that
\[
\mathrm D_{\sigma}^{(1)} \simeq \mathrm{End}_{R\mathrm{-cont}}(A),
\]
which means that any continuous endomorphism of $R\{x\}$ is a bounded differential operator.
\end{xmp}

\begin{cor} \label{rigndu}
The $A$-module $\mathrm{Hom}_{A\mathrm{-cont}}(A\{\xi/\eta\}, A)$ is a (non commutative) Banach $R$-algebra for the multiplication defined by
\[
\xymatrix@R0cm{\Phi \Psi : & A\{\xi/\eta\} \ar[r]^-{\delta_{\eta}} & A\{\xi/\eta\} \widehat \otimes'_{A} A\{\xi/\eta\} \ar[r]^-{\mathrm{Id} \otimes' \Psi} & A\{\xi/\eta\} \ar[r]^-{ \Phi} & A. & \Box}
\]
\end{cor}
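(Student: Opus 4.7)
The plan is to recognize this as a direct transport of structure from proposition \ref{ring} via the analytic density lemma \ref{andens}. Applying \ref{andens} in the case $M = A$ gives the isomorphism of Banach $A$-modules
\[
\mathrm D_{\sigma}^{(\eta)} \simeq \mathrm{Hom}_{A\mathrm{-cont}}(A\{\xi/\eta\}, A),
\]
and proposition \ref{ring} tells us that the left hand side is already a Banach $R$-algebra under composition, with $\|\varphi\circ\psi\|_{\eta} \leq \|\varphi\|_{\eta}\|\psi\|_{\eta}$. Transporting this algebra structure through the isomorphism is what produces a Banach $R$-algebra structure on the right hand side. Hence the only substantive task is to compute what the composition law becomes after transport, and to check that it agrees with the formula in the statement.

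First I would fix the correspondence. Under \ref{andens}, a twisted differential operator $\varphi \in \mathrm D_{\sigma}^{(\eta)}$ corresponds to its $\eta$-linearization $\Phi := \widetilde \varphi_{\eta} \in \mathrm{Hom}_{A\mathrm{-cont}}(A\{\xi/\eta\}, A)$, characterized by $\varphi = \Phi \circ \theta_{\eta}$. Similarly $\psi$ corresponds to $\Psi$. To identify the composition, I would specialize the commutative diagram \eqref{commut} from the proof of proposition \ref{ring} to the case $L = M = N = A$. The bottom row of that diagram reads
\[
A\{\xi/\eta\} \xrightarrow{\delta_{\eta}} A\{\xi/\eta\} \widehat \otimes'_{A} A\{\xi/\eta\} \xrightarrow{\mathrm{Id} \otimes' \widetilde \psi_{\eta}} A\{\xi/\eta\} \xrightarrow{\widetilde \varphi_{\eta}} A,
\]
and it equals the $\eta$-linearization of $\varphi \circ \psi$. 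Substituting $\widetilde \varphi_{\eta} = \Phi$ and $\widetilde \psi_{\eta} = \Psi$ yields exactly the composition $\Phi\Psi$ of the statement. Thus the product defined by the formula in the corollary corresponds, under the density isomorphism, to the composition of the associated differential operators.

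Associativity, unitality and $R$-bilinearity are inherited from composition on $\mathrm D_{\sigma}^{(\eta)}$, so nothing is to be done there. For the Banach algebra estimate, one can either invoke the transported inequality directly from proposition \ref{ring}, or verify it intrinsically on the formula: since $\|\delta_{\eta}\| \leq 1$ by proposition \ref{deldef}, and since $\mathrm{Id} \otimes' \Psi$ has operator norm $\leq \|\Psi\|$ by the tensor product norm, one gets $\|\Phi\Psi\| \leq \|\Phi\|\|\Psi\|$. Completeness of the target is automatic from the completeness of $\mathrm{Hom}_{A\mathrm{-cont}}(A\{\xi/\eta\}, A)$ as a Banach $A$-module (already noted in the remarks following proposition \ref{ring}).

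I do not anticipate a genuine obstacle: the only conceptual point is recognizing that the diagram \eqref{commut} literally describes the transported multiplication, after which everything is formal. If anything deserves care it is the bookkeeping with the primed tensor products (the ``right'' structure on the left factor), but this is already standardized in the excerpt and is exactly what makes the composite in the statement well-defined as an $A$-linear map into $A$.
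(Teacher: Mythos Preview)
Your proposal is correct and is exactly the argument the paper intends: the corollary is stated with a $\Box$ because it follows immediately from the analytic density lemma \ref{andens} together with proposition \ref{ring}, and you have correctly identified that specializing diagram \eqref{commut} to $L=M=N=A$ gives the displayed formula for the transported product.
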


By duality, from the bottom line of diagram \eqref{thetac}, one obtains a series of inclusion maps
\begin{equation} \label{dcomp}
\xymatrix{
\mathrm D_{\sigma}^{(\infty)} \ar@{^{(}->}[r] & \mathrm D_{\sigma}^{(\eta)} \ar@{^{(}->}[r] \ar@{^{(}->}[rd] &  \widehat {\mathrm D}_{\sigma}^{(\infty)}
\\
&&\mathrm{End}_{R\mathrm{-cont}}(A), \ar[u]
}
\end{equation}
and the right upper map is explicitly given by
\[
\varphi \mapsto \sum_{k=0}^\infty \widetilde \varphi_{\eta}(\xi^{(k)}) \partial_{\sigma}^{[k]}.
\]

\begin{prop}
The inclusion map $\mathrm D_{\sigma}^{(\eta)} \hookrightarrow  \widehat {\mathrm D}_{\sigma}^{(\infty)}$ induces an isometric isomorphism of Banach $A$-modules
\[
\xymatrix@R0cm{
\mathrm D_{\sigma}^{(\eta)} \ar[r]^-\simeq & \left\{\sum_{k=0}^\infty z_{k} \partial_{\sigma}^{[k]}, \quad \exists C > 0, \forall k \in \mathbb N, \|z_{k}\| \leq C\eta^k\right\}
}
\]
for the sup norm
\[
\left\| \sum_{k=0}^\infty z_{k} \partial_{\sigma}^{[k]}\right\|_{\eta} = \sup \{\|z_{k}\|/\eta^k\}.
\]
\end{prop}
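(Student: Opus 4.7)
The plan is to combine the analytic density lemma \ref{andens}, which gives an isometry $\mathrm D_\sigma^{(\eta)} \simeq \mathrm{Hom}_{A\text{-cont}}(A\{\xi/\eta\}, A)$, with the orthogonal Schauder basis $\{\xi^{(k)}\}_{k\in\mathbb N}$ of $A\{\xi/\eta\}$ from proposition \ref{schau}. Once both tools are in place, the proposition follows by evaluating continuous $A$-linear forms on the basis.

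Concretely, I would first check that the map sending a continuous $A$-linear form $\Phi : A\{\xi/\eta\} \to A$ to the sequence $z_k := \Phi(\xi^{(k)})$ is an $A$-linear bijection onto the set of sequences $(z_k)$ for which $\|z_k\|/\eta^k$ is bounded. Injectivity is immediate from the uniqueness of the expansion $f = \sum a_k \xi^{(k)}$ with $\|a_k\|\eta^k \to 0$, and for surjectivity, given any bound $C$ on $\|z_k\|/\eta^k$, the formula $\Phi(f) := \sum_k a_k z_k$ makes sense because $\|a_k z_k\| \leq C\|a_k\|\eta^k \to 0$, and it visibly defines a continuous $A$-linear form. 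The norm identity then falls out of orthogonality: on one hand,
\[
\|\Phi(f)\| \leq \sup_k \|a_k z_k\| \leq \sup_k \|a_k\|\eta^k \cdot \sup_k \frac{\|z_k\|}{\eta^k} = \|f\|_\eta \cdot \sup_k \frac{\|z_k\|}{\eta^k},
\]
and the reverse inequality $\|\Phi\| \geq \|z_k\|/\eta^k$ is obtained by testing $\Phi$ at $f = \xi^{(k)}$, using $\|\xi^{(k)}\|_\eta = \eta^k$.

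To conclude, I would chase through the diagram \eqref{dcomp}: the inclusion $\mathrm D_\sigma^{(\eta)} \hookrightarrow \widehat{\mathrm D}_\sigma^{(\infty)}$ is exactly the explicit map $\varphi \mapsto \sum_k \widetilde\varphi_\eta(\xi^{(k)}) \partial_\sigma^{[k]}$ recalled just before the statement, so the composite $\varphi \mapsto \widetilde\varphi_\eta \mapsto (\widetilde\varphi_\eta(\xi^{(k)}))_k$ realizes this inclusion as the isometric bijection described in the proposition. I do not foresee any real obstacle: everything is a direct consequence of the orthogonal Schauder basis structure of proposition \ref{schau} and the analytic density lemma; the only mildly delicate point is bookkeeping the duality between the norms $\|\xi^{(k)}\|_\eta = \eta^k$ on the source side and $\|z_k\|/\eta^k$ on the target side.
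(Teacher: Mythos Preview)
Your proposal is correct and follows essentially the same route as the paper's proof: both pass through the analytic density lemma and the orthogonal Schauder basis $\{\xi^{(k)}\}$, evaluate a continuous $A$-linear form at $\xi^{(k)}$ to get the coefficients $z_k$, and construct the inverse via $\Phi\bigl(\sum a_k\xi^{(k)}\bigr) = \sum a_k z_k$ using the convergence $\|a_k z_k\| \leq C\|a_k\|\eta^k \to 0$. Your treatment of the two inequalities giving the isometry is in fact slightly more explicit than the paper's, but the argument is the same.
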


\begin{proof}
Let $\varphi \in \mathrm D_{\sigma}^{(\eta)}$.
Since $\widetilde \varphi_{\eta}$ is continuous, we have for all $k \in \mathbb N$,
\[
\|\widetilde \varphi_{\eta}(\xi^{(k)})\| \leq \|\widetilde \varphi_{\eta}\|\|\xi^{(k)}\| = \|\widetilde \varphi_{\eta}\|\eta^k
\]
and it follows that the canonical map $\mathrm D_{\sigma}^{(\eta)} \hookrightarrow  \widehat {\mathrm D}_{\sigma}^{(\infty)}$ takes its values inside the right hand side as asserted.
It also follows that the corresponding map has norm at most $1$.
Conversely, assume that we are given some $\sum_{k=0}^\infty w_{k} \partial_{\sigma}^{[k]}$ of norm (at most) $C$ in the right hand side.
Then, there exists a unique $\varphi \in \mathrm D_{\sigma}^{(\eta)}$ such that
\[
\widetilde \varphi_{\eta}(\sum_{k=0}^\infty z_{k} \xi^{(k)}) = \sum_{k=0}^\infty z_{k}w_{k} \in A
\]
because $\|z_{k}w_{k}\| \leq C \|z_{n}\|\eta^k \to 0$.
This defines an inverse map for our inclusion and the same inequality also implies that $\|\varphi\| \leq C$.
This shows that our map is an isometry.
\end{proof}

\begin{rmks}
\begin{enumerate}
\item
We will usually identify $\mathrm D_{\sigma}^{(\eta)}$ with its image inside $\widehat {\mathrm D}_{\sigma}^{(\infty)}$.
Note that we could have chosen to define it this way, but we do want an $A$-algebra and not merely an $A$-module.
\item
There exists a (left perfect) topological pairing of Banach $A$-modules
\begin{equation} \label{pairc}
A\{\xi/\eta\} \times  \mathrm D_{\sigma}^{(\eta)} \to  A, \quad (\xi^{(n)},\partial_{\sigma}^{[k]}) \mapsto \left\{\begin{array}{ccc}1 \ \mathrm{if} \ n = k \ \\ \ 0 \ \mathrm{otherwise} \end{array}\right..
\end{equation}
\item
The action of $A\{\xi/\eta\}$ on  $\mathrm D_{\sigma}^{(\eta)}$ is
given by
\begin{equation} \label{indfo}
\xi \cdot  \sum_{k=0}^\infty z_{k} \partial_{\sigma}^{[k]} =  \sum_{k=0}^\infty \left(z_{k+1} - z_{k}(x - \sigma^k(x)) \right) \partial_{\sigma}^{[k]},
\end{equation}
and one recovers the paring \eqref{pairc} by composition with evaluation at $1$.
\end{enumerate}
\end{rmks}

\begin{prop} \label{direct}
An $\eta$-convergent $A$-finite $\mathrm D_{\sigma}^{(\infty)}$-module $M$ extends canonically to a $\mathrm D_{\sigma}^{(\eta)}$-module.
\end{prop}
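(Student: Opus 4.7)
The plan is to use the twisted Taylor map of radius $\eta$ on $M$ as the bridge between $M$ and $\mathrm D^{(\eta)}_{\sigma}$. Since $M$ is $\eta$-convergent, proposition \ref{cricv} already provides a Taylor map $\theta_{\eta}\colon M \to M \otimes_{A} A\{\xi/\eta\}$ factoring the formal Taylor series map. I would define the action of $\varphi \in \mathrm D^{(\eta)}_{\sigma}$ on $s \in M$ by the formula
\[
\varphi \cdot s := \sum_{k=0}^\infty \widetilde \varphi_{\eta}(\xi^{(k)})\, \partial^{[k]}_{\sigma}(s),
\]
which is the natural pairing of the Taylor series $\theta_{\eta}(s) = \sum \partial^{[k]}_\sigma(s) \otimes \xi^{(k)}$ with $\varphi$ via its $\eta$-linearization $\widetilde \varphi_{\eta}$. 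Convergence in $M$ is immediate: with $z_{k} := \widetilde \varphi_{\eta}(\xi^{(k)})$, we have $\|z_{k}\| \leq \|\widetilde \varphi_{\eta}\|\,\eta^k$ (since $\{\xi^{(k)}\}$ is an orthogonal Schauder basis of $A\{\xi/\eta\}$ with $\|\xi^{(k)}\|_{\eta} = \eta^k$), and by $\eta$-convergence $\eta^k\|\partial^{[k]}_{\sigma}(s)\| \to 0$, so $\|z_{k}\partial^{[k]}_{\sigma}(s)\| \to 0$ as $k \to \infty$.

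Next I would check that this action extends the given $\mathrm D^{(\infty)}_{\sigma}$-module structure. This is automatic from the description \eqref{dcomp} of the inclusion $\mathrm D^{(\infty)}_{\sigma} \hookrightarrow \mathrm D^{(\eta)}_{\sigma} \hookrightarrow \widehat{\mathrm D}^{(\infty)}_{\sigma}$, since for $\varphi \in \mathrm D^{(\infty)}_{\sigma}$ only finitely many $\widetilde \varphi_{\eta}(\xi^{(k)})$ are nonzero and the series above reduces to the finite sum $\sum \widetilde\varphi(\xi^{(k)}) \partial^{[k]}_{\sigma}(s)$ that computes the original action through the Taylor structure.

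The main obstacle is verifying that this really gives a ring action, i.e., that $1 \cdot s = s$ and $(\varphi \circ \psi) \cdot s = \varphi \cdot (\psi \cdot s)$. Unitality is clear ($\widetilde{\mathrm{Id}}_{\eta}(\xi^{(k)}) = \delta_{0k}$). For associativity, I would perform a diagram chase combining the two key ingredients: on the one hand, the Taylor coassociativity diagram \eqref{twistay} of proposition \ref{teleta}, which says
\[
(\theta_{\eta} \otimes' \mathrm{Id}) \circ \theta_{\eta} = (\mathrm{Id}_{M} \otimes \delta_{\eta}) \circ \theta_{\eta};
\]
on the other hand, the diagram \eqref{commut} of proposition \ref{ring} defining the ring multiplication on $\mathrm D^{(\eta)}_{\sigma}$ via $\delta_{\eta}$. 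Pairing both sides against $\varphi$ and $\psi$ at the appropriate places (using continuity of $\widetilde \varphi_{\eta}$ and $\widetilde \psi_{\eta}$ to exchange the summation and the evaluation with the completed tensor product) yields the identity $(\varphi \circ \psi)\cdot s = \varphi \cdot (\psi \cdot s)$.

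The delicate point is really the last one: one has to justify that the various infinite sums can be regrouped in the completed tensor product $M \otimes_{A} A\{\xi/\eta\} \widehat \otimes'_{A} A\{\xi/\eta\}$, so that applying $\mathrm{Id}_{M} \otimes \widetilde\varphi_{\eta} \otimes \widetilde\psi_{\eta}$ to it gives the same thing whether one first contracts via $\delta_{\eta}$ (giving $(\varphi \circ \psi)\cdot s$) or first applies $\theta_{\eta} \otimes' \mathrm{Id}$ (giving $\varphi \cdot (\psi \cdot s)$). Once this analytic compatibility is checked — essentially a continuity argument using the norm estimates $\|\delta_{\eta}\| \leq 1$ of proposition \ref{deldef} and the bound $\|\varphi \circ \psi\|_{\eta} \leq \|\varphi\|_{\eta}\|\psi\|_{\eta}$ of proposition \ref{ring} — the extension is canonical because it is forced by compatibility with the Taylor map, itself uniquely determined by the $\mathrm D^{(\infty)}_{\sigma}$-structure and $\eta$-convergence.
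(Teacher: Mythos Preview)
Your proposal is correct and follows essentially the same approach as the paper: both use the twisted Taylor map of radius $\eta$ (proposition \ref{cricv}) to define the action, and both appeal to the coassociativity diagram \eqref{twistay} together with the description of composition via $\delta_{\eta}$ to check it is a ring action. The only difference is presentational: the paper packages the construction abstractly as an $A\{\xi/\eta\}$-linear map $\mathrm D_{\sigma}^{(\eta)} \to \mathrm{Diff}^{(\eta)}_{\sigma}(M,M)$ obtained from the canonical map $M \otimes_{A} A\{\xi/\eta\} \to \mathrm{Hom}_{A}(\mathrm D_{\sigma}^{(\eta)}, M)$, whereas you unwind this in the Schauder basis as the explicit series $\varphi \cdot s = \sum_{k} \widetilde\varphi_{\eta}(\xi^{(k)})\,\partial^{[k]}_{\sigma}(s)$.
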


\begin{proof} \label{stdm}
The process is standard but requires some care.
We first consider the natural $A\{\xi/\eta\}$-linear map
\begin{equation}
\xymatrix@R=0cm{
M \otimes_{A} A\{\xi/\eta\} \ar[r] & \mathrm{Hom}_{A}(\mathrm{Hom}_{A}(A\{\xi/\eta\}, A), M) \\
s \otimes f \ar@{|->}[r] & (\Phi \mapsto \Phi(f)s)
}.
\end{equation}
Next, we extend linearly the twisted Taylor map of radius $\eta$ of $M$ in order to get an  $A\{\xi/\eta\}$-linear map $A\{\xi/\eta\} \otimes' M \to M \otimes A\{\xi/\eta\}$ and we compose on the left hand side.
Finally, we use restriction to continuous maps and the isomorphism $D_{\sigma}^{(\eta)} \simeq \mathrm{Hom}_{A\mathrm{-cont}}(A\{\xi/\eta\}, A)$ on the right hand side, and we obtain an $A\{\xi/\eta\}$-linear map
$$
A\{\xi/\eta\} \otimes_{A}' M \to \mathrm{Hom}_{A}(\mathrm D_{\sigma}^{(\eta)}, M),
$$
or equivalently, an $A\{\xi/\eta\}$-linear map
$$
\mathrm D_{\sigma}^{(\eta)} \to \mathrm{Hom}_{A}(A\{\xi/\eta\} \otimes' M, M) = \mathrm{Diff}^{(\eta)}_{\sigma}(M,M).
$$
Using the commutativity of \eqref{twistay}, one can check that this is a morphism of rings.
By construction, it is compatible with the natural map
$$
\mathrm D_{\sigma}^{(\infty)} \to \mathrm{Diff}^{(\infty)}_{\sigma}(M,M)
$$
that gives the action of $\mathrm D_{\sigma}^{(\infty)}$ on $M$.
\end{proof}

There is no exact converse to this proposition because the canonical map
$$
M \otimes_{A} A\{\xi/\eta\} \to \mathrm{Hom}_{A}(\mathrm D_{\sigma}^{(\eta)}, M)
$$
is far from being bijective in general (the right hand side is way too big as the case $M = A$ shows).
However, there exists a partial converse that will be used later:

\begin{prop} \label{prim}
If $M$ is an $A$-finite $\mathrm D^{(\eta)}_{\sigma}$-module then $M$ is $\eta\dagger$-convergent.
\end{prop}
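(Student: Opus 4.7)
The plan is to exploit the Banach algebra structure of $\mathrm D_\sigma^{(\eta)}$, established in the previous proposition, to produce quantitative bounds on the individual norms $\|\partial_\sigma^{[k]}(s)\|$ for $s \in M$, which will immediately yield $\eta\dagger$-convergence via the characterization $\mathrm{Rad}(s) = \underline\lim_k \|\partial_\sigma^{[k]}(s)\|^{-1/k}$.

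Recall that inside $\widehat{\mathrm D}_\sigma^{(\infty)}$ the ring $\mathrm D_\sigma^{(\eta)}$ is realized isometrically as the Banach $A$-module of elements $\sum_k z_k\partial_\sigma^{[k]}$ with $\|z_k\| \leq C\eta^k$, normed by $\sup_k \|z_k\|/\eta^k$; in particular each basis element has norm $\|\partial_\sigma^{[k]}\|_\eta = \eta^{-k}$. The given $\mathrm D_\sigma^{(\eta)}$-action on $M$ provides, for each $s \in M$, an $A$-linear evaluation map
\[
\mathrm{ev}_s : \mathrm D_\sigma^{(\eta)} \longrightarrow M, \qquad \Phi \longmapsto \Phi \cdot s.
\]
The crux is to show that $\mathrm{ev}_s$ is continuous, bounded by some $C_s < \infty$; then
\[
\|\partial_\sigma^{[k]}(s)\| = \|\mathrm{ev}_s(\partial_\sigma^{[k]})\| \leq C_s\,\eta^{-k},
\]
so for any $\eta' < \eta$ we get $\|\partial_\sigma^{[k]}(s)\|\,\eta'^k \leq C_s(\eta'/\eta)^k \to 0$. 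This shows that $s$ is $\eta'$-convergent for every $\eta' < \eta$, hence $\eta\dagger$-convergent; since $s$ is arbitrary, $M$ itself is $\eta\dagger$-convergent.

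The main obstacle is establishing the continuity of $\mathrm{ev}_s$, i.e.\ of the structure map $\mathrm D_\sigma^{(\eta)} \to \mathrm{End}_R(M)$. I would handle this by appealing to the natural topological compatibility built into a $\mathrm D_\sigma^{(\eta)}$-module structure on the finite (hence Banach) $A$-module $M$ --- in the spirit of the automatic continuity already proved for twisted derivations (proposition \ref{contprop}) and for operators in $\mathrm D_\sigma^{(\infty)}$ (via finiteness of $P_{(n)}$). Alternatively, one can phrase the argument dually: the analytic density lemma identifies $\mathrm D_\sigma^{(\eta)}$ with $\mathrm{Hom}_{A\text{-cont}}(A\{\xi/\eta\},A)$, so the $\mathrm D_\sigma^{(\eta)}$-module structure on $M$ corresponds, exactly as in the proof of proposition \ref{direct} but read backwards, to a continuous $A\{\xi/\eta\}$-linear twisted Taylor--type map $M \to \mathrm{Hom}_A(\mathrm D_\sigma^{(\eta)},M)$; truncating against each $\eta' < \eta$ and applying proposition \ref{cricv} then delivers an $\eta'$-convergent Taylor map, giving $\eta\dagger$-convergence without having to invoke the stronger $\eta$-convergence (which, as the direct direction makes clear, would require summability $\|\partial_\sigma^{[k]}(s)\|\eta^k \to 0$ rather than mere boundedness).
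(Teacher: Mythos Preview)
Your approach is essentially the same as the paper's: bound $\|\partial_\sigma^{[k]}(s)\|$ by $C_s\eta^{-k}$ via continuity of the evaluation map $\mathrm D_\sigma^{(\eta)}\to M$, $\varphi\mapsto\varphi(s)$, and then conclude $\eta'$-convergence for every $\eta'<\eta$. The paper resolves the step you flag as the crux --- automatic continuity of the action --- by invoking the Banach open mapping theorem (citing lemme 4.1.2 of Berthelot) to show that $M$ is a \emph{topological} $\mathrm D_\sigma^{(\eta)}$-module; your appeal to ``natural topological compatibility'' is the right instinct but should be made precise in exactly this way, since the analogy with proposition \ref{contprop} (which is about derivations, not about actions of an infinite-dimensional Banach algebra) does not by itself do the job.
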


\begin{proof}
We have to show that $M$ is $\eta'$-convergent whenever $\eta' < \eta$.

It follows from Banach open mapping theorem that $M$ is actually a topological $\mathrm D^{(\eta)}_{\sigma}$-module (see lemme 4.1.2 of \cite{Berthelot96} for example).
As a consequence, for all $s \in M$, the map
$$
\mathrm D^{(\eta)}_{\sigma} \to M, \quad \varphi \mapsto \varphi(s)
$$
will be a continuous $A$-linear map.
This means that there exists a constant $C$ such that for all $\varphi \in \mathrm D^{(\eta)}_{\sigma}$, we have $\|\varphi(s)\| \leq C \|\varphi\|\|s\|$.
Therefore, for all $k \in \mathbb N$, we have $\|\partial_{\sigma}^{[k]}(s)\| \leq \frac { C\|s\|}{\eta^k}$, and $\|\partial_{\sigma}^{[k]}(s)\|\eta'^k \leq C\|s\| (\frac {\eta'}\eta)^k \to 0$.
\end{proof}

\section{Deformation}

As before, we let $(A, \sigma)$ be a twisted affinoid $R$-algebra with twisted coordinate $x$.
We denote by $\rho$ the $x$-radius of $\sigma$ on $A$ with respect to $x$ and some contractive norm.
We also fix some $\eta \geq \rho$ such that $A$ is $\eta$-convergent 

\begin{prop} \label{indeta}
Assume that $x$ is also a twisted coordinate for some other endomorphism $\tau$ of $A$ and that $\eta \geq \rho(\tau)$.
Then, $A$ is also $\eta$-convergent with respect to $\tau$ with the \emph{same} twisted Taylor morphism as $\sigma$.
\end{prop}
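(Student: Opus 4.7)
The plan is to show that the existing $\sigma$-Taylor map $\theta_{\eta,\sigma}\colon A \to A\{\xi/\eta\}$ itself serves as the $\tau$-Taylor map of radius $\eta$; the $\eta$-convergence of $A$ with respect to $\tau$ and the equality of the two Taylor morphisms will then follow simultaneously.

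First I would collect the pieces already at hand. By Proposition~\ref{cricv}, the hypothesis supplies the map $\theta_{\eta,\sigma}\colon A \to A\{\xi/\eta\}$; by the third remark after Proposition~\ref{ext}, it is a morphism of affinoid $R$-algebras, and necessarily $\theta_{\eta,\sigma}(x) = x+\xi$, since $\theta(x) = \widetilde x$ corresponds to $x + \xi$ under the identification of Proposition~\ref{comps} applied with $\sigma$. Since $\eta \geq \rho(\tau)$ and $x$ is a twisted coordinate for $\tau$, the same Proposition~\ref{comps}, applied now with $\tau$ in place of $\sigma$, gives a continuous embedding $\iota_\tau\colon A\{\xi/\eta\} \hookrightarrow \widehat P_\tau$ sending $\xi$ to $d(x) = \widetilde x - x$.

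The central step is to verify that the composition $\Phi := \iota_\tau \circ \theta_{\eta,\sigma}\colon A \to \widehat P_\tau$ coincides with the formal $\tau$-Taylor map $\widehat\theta_\tau\colon z \mapsto 1 \otimes z$. Both are continuous morphisms of $R$-algebras and both send $x$ to $\widetilde x = x + d(x)$, so I would appeal to the uniqueness observation made in the first remark after Proposition~\ref{ext}: in the geometric setting there is at most one morphism of $R$-algebras $A \to A\{\xi/\eta\}$ sending $x$ to $\widetilde x$, and composing with $\iota_\tau$ the same uniqueness holds inside $\widehat P_\tau$. A more structural variant proceeds at finite order: the twisted coordinate condition for $\tau$ gives $P_{(n)_\tau} \simeq A[\xi]/\xi^{(n+1)}_\tau$, so $\Phi$ and $\widehat\theta_\tau$ reduce to continuous $R$-algebra maps $A \to A[\xi]/\xi^{(n+1)}_\tau$ agreeing on $x$, and an induction on $n$ then shows they agree on all of $A$.

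Granted $\Phi = \widehat\theta_\tau$, the formal $\tau$-Taylor map factors through $A\{\xi/\eta\} \hookrightarrow \widehat P_\tau$; by Proposition~\ref{cricv} applied to $\tau$, $A$ is therefore $\eta$-convergent for $\tau$, and the factorization is, by definition, the $\tau$-Taylor map of radius $\eta$, hence equals $\theta_{\eta,\sigma}$. The main obstacle is precisely the uniqueness step: even with the twisted coordinate hypothesis, one must know that $x$ generates $A$ topologically over $R$ in a suitable sense, which is automatic for the affinoid domains of the line considered in the examples but may require a separate argument in general.
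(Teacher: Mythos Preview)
Your overall strategy matches the paper's: show that the composite $A \xrightarrow{\theta_{\eta,\sigma}} A\{\xi/\eta\} \hookrightarrow \widehat P_{\tau}$ coincides with the formal $\tau$-Taylor map, and then invoke Proposition~\ref{cricv}. The gap you yourself flag is real, however, and your proposed fixes do not close it. Two continuous $R$-algebra maps $A \to A[\xi]/\xi^{(n+1)}_\tau$ agreeing on $x$ need \emph{not} agree on $A$ in this generality: the definition of twisted coordinate only says that $A[\xi]_{\leq n} \simeq P_{(n)}$, not that $x$ topologically generates $A$ over $R$. Your ``induction on $n$'' does not help, since the same generation issue reappears at every finite level.

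The paper avoids uniqueness arguments entirely by exploiting the factorization through the universal object $P = A\widehat\otimes_R A$. From diagram~\eqref{thetac} one has $\theta_{\eta,\sigma} = \widetilde\theta_{\eta,\sigma} \circ \theta$, where $\theta : A \to P$, $z \mapsto 1\otimes z$, is independent of any endomorphism and $\widetilde\theta_{\eta,\sigma} : P \to A\{\xi/\eta\}$ is the $A$-linearization. Since $\widetilde\theta_{\eta,\sigma}$ sends $\widetilde x - x$ to $\xi$, it carries $I^{(n+1)_\tau}$ into $(\xi^{(n+1)_\tau})$, so there is a commutative square identifying $A \xrightarrow{\theta_{\eta,\sigma}} A\{\xi/\eta\} \twoheadrightarrow A\{\xi/\eta\}/\xi^{(n+1)_\tau}$ with $A \xrightarrow{\theta} P \twoheadrightarrow P_{(n)_\tau}$ (using Proposition~\ref{comps} for $\tau$ to match the targets). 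The right-hand composite is visibly the level-$n$ $\tau$-Taylor map and does not involve $\sigma$; passing to the limit over $n$ gives exactly $\iota_\tau \circ \theta_{\eta,\sigma} = \widehat\theta_\tau$. This replaces your uniqueness step with a direct diagrammatic identification and works for arbitrary $A$.
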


\begin{proof}
We may consider the following commutative diagram
\[
\xymatrix{&& A \ar[ld]_{\theta} \ar[d]^{\theta_{\eta}}
\\ A[\xi] \ar@{^{(}->}[r] \ar@{->>}[d] & P \ar[r] \ar@{->>}[d] & A\{\xi/\eta\} \ar@{->>}[d]
\\ A[\xi]/\xi^{(n)_{\tau}} \ar@{=}[r] \ar@/_2pc/[rr] & P_{(n)_{\tau}}  & A\{\xi/\eta\}/\xi^{(n)_{\tau}}
}
\]
\\
(where $\theta_{\eta}$ denotes the twisted Taylor map of radius $\eta$ with respect to $\sigma$).
A quick look shows that the composition of the right vertical maps does not depend on $\sigma$.
Moreover, since we assume that $\rho(\tau) \leq \eta$, then the curved arrow is an isomorphism by proposition \ref{comps}, and going to the limit on $n$ provides us with a commutative diagram
\[
\xymatrix{& A \ar[ld]_{\theta} \ar[d]^{\theta_{\eta}}
\\ P \ar[d] & A\{\xi/\eta\} \ar@{^{(}->}[ld]
\\ \widehat P_{\tau}.
}
\]
It then follows from proposition \ref{cricv} that $A$ is $\eta$-convergent with respect to $\tau$ and that $\theta_{\eta}$ is the twisted Taylor map of radius $\eta$ with respect to $\tau$. \end{proof}

\begin{cor} \label{indsig}
The ring structure of $\mathrm{Hom}_{A\mathrm{-cont}}(A\{\xi/\eta\}, A)$ does \emph{not} depend on $\sigma$.
\end{cor}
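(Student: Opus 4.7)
The plan is to read off the ring structure from Corollary \ref{rigndu} and check that each ingredient in the formula is unchanged when $\sigma$ is replaced by another endomorphism $\tau$ of $A$ for which $x$ is still a twisted coordinate and $\eta \geq \rho(\tau)$. Recall that for $\Phi, \Psi \in \mathrm{Hom}_{A\mathrm{-cont}}(A\{\xi/\eta\}, A)$ the product is
\[
\Phi\Psi = \Phi \circ (\mathrm{Id} \otimes' \Psi) \circ \delta_{\eta},
\]
so the only pieces that could a priori depend on $\sigma$ are the right $A$-module structure on $A\{\xi/\eta\}$ (which determines $A\{\xi/\eta\} \widehat \otimes'_{A} A\{\xi/\eta\}$) and the comultiplication $\delta_{\eta}$; the underlying Banach $A$-algebra $A\{\xi/\eta\}$ (for the left structure) depends only on $A$ and $\eta$.

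First I would recall that, by definition, the right structure on $A\{\xi/\eta\}$ is the $A$-module structure transported along the twisted Taylor map $\theta_{\eta}: A \to A\{\xi/\eta\}$. Proposition \ref{indeta} is precisely the statement that $A$ is also $\eta$-convergent with respect to $\tau$ and that its twisted Taylor map with respect to $\tau$ coincides with $\theta_{\eta}$. Hence the right structure, and the completed tensor product $A\{\xi/\eta\} \widehat \otimes'_{A} A\{\xi/\eta\}$ built from it, are literally the same object for $\sigma$ and for $\tau$.

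Next, by Proposition \ref{deldef}, $\delta_{\eta}$ is characterized as the unique morphism of affinoid $R$-algebras from $A\{\xi/\eta\}$ to $A\{\xi/\eta\} \widehat \otimes'_{A} A\{\xi/\eta\}$ sending $\xi$ to $1 \otimes' \xi + \xi \otimes' 1$. Source, target, and defining formula are now all known to be independent of the choice of endomorphism, so $\delta_{\eta}$ itself is independent of this choice. Substituting these independences into the formula for $\Phi\Psi$ yields the claim. There is no genuine obstacle: the real work was already carried out in Proposition \ref{indeta}, and the present corollary is a formal consequence.
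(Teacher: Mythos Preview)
Your proof is correct and follows essentially the same approach as the paper's own proof, which simply says the result follows from the definition of the multiplication in Corollary~\ref{rigndu} together with Proposition~\ref{indeta}. You have spelled out in detail the two potentially $\sigma$-dependent ingredients (the right structure via $\theta_{\eta}$ and the comultiplication $\delta_{\eta}$) and checked that each is fixed by Proposition~\ref{indeta}, which is exactly what the paper leaves implicit.
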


\begin{proof}
Follows from the definition in corollary \ref{rigndu} of the multiplication rule on the $A$-module $\mathrm{Hom}_{A\mathrm{-cont}}(A\{\xi/\eta\}, A)$ and proposition \ref{indeta}.
\end{proof}

\begin{thm}[Deformation] \label{rnghom}
Let $\tau$ be another $R$-endomorphism of $A$ such that $x$ is also a $\tau$-coordinate.
If $\eta \geq \rho(\tau)$, then there exists an isometric $A$-linear isomorphism of $R$-algebras
\[
\xymatrix@R0cm{ \mathrm D_{A/R,\sigma}^{(\eta)} \ar[r]^-{\simeq} & \mathrm D_{A/R,\tau}^{(\eta)}}.
\]
that only depend on $x$.
\end{thm}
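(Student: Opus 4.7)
The strategy is to route both rings through the common intermediary $\mathrm{Hom}_{A\mathrm{-cont}}(A\{\xi/\eta\},A)$, transferring the Banach $A$-module structure via the analytic density lemma and the multiplication via the comultiplication $\delta_{\eta}$.

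The first step is to apply Proposition \ref{indeta} to $\tau$: since $x$ is a $\tau$-coordinate and $\eta \geq \rho(\tau)$, the algebra $A$ is $\eta$-convergent with respect to $\tau$, and the resulting twisted Taylor map $\theta_{\eta} : A \to A\{\xi/\eta\}$ coincides with the one associated to $\sigma$. In particular the ``right'' $A$-algebra structure on $A\{\xi/\eta\}$ is the same in both cases, the comultiplication $\delta_{\eta}$ is the same, and hence so is the Banach $A$-module $\mathrm{Hom}_{A\mathrm{-cont}}(A\{\xi/\eta\},A)$.

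The second step is to compose the analytic density isomorphisms of Lemma \ref{andens} for $\sigma$ and for $\tau$:
\[
\mathrm D_{A/R,\sigma}^{(\eta)} \;\simeq\; \mathrm{Hom}_{A\mathrm{-cont}}(A\{\xi/\eta\},A) \;\simeq\; \mathrm D_{A/R,\tau}^{(\eta)}.
\]
Each is an isometric $A$-linear isomorphism of Banach $A$-modules, so their composite is too. To promote it to an $R$-algebra isomorphism I invoke Proposition \ref{ring} — which identifies the composition product on each $\mathrm D^{(\eta)}$ with the product on $\mathrm{Hom}_{A\mathrm{-cont}}(A\{\xi/\eta\},A)$ built from $\delta_{\eta}$ as in Corollary \ref{rigndu} — together with Corollary \ref{indsig}, which asserts precisely that this latter product does not depend on the endomorphism. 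Since every ingredient ($A\{\xi/\eta\}$ with its orthogonal Schauder basis, the Taylor map $\theta_{\eta}$, and the comultiplication $\delta_{\eta}$) is manifestly determined by $x$ alone, the resulting isomorphism depends only on $x$.

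There is no substantive obstacle: all the work has already been absorbed into Proposition \ref{indeta} (sameness of the Taylor map), Lemma \ref{andens} (perfect topological $A$-duality), and Corollary \ref{indsig} (independence of multiplication). In fact, viewed as subsets of $\mathrm{End}_{R\mathrm{-cont}}(A)$ the two rings literally coincide, because the condition ``$\varphi$ extends $A$-linearly through $\theta_{\eta}$'' depends only on $\theta_{\eta}$, which is common to $\sigma$ and $\tau$. The explicit change-of-basis formula $\partial_{\sigma} = \sum_{k=1}^{\infty} \left( \prod_{i=1}^{k-1}(\sigma(x) - \tau^{i}(x)) \right) \partial_{\tau}^{[k]}$ announced in the introduction is then obtained simply by expanding the $\sigma$-Schauder basis $\xi^{(k)}_{\sigma}$ in the $\tau$-Schauder basis using \eqref{shaub} and dualizing through the pairing \eqref{pairc}.
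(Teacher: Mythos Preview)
Your proof is correct and follows exactly the paper's approach: route both rings through $\mathrm{Hom}_{A\mathrm{-cont}}(A\{\xi/\eta\},A)$ via the analytic density lemma, and use Corollary \ref{indsig} to see that the ring structure there is independent of the endomorphism. The paper's proof is the same argument compressed into a single sentence; your additional remarks (identical Taylor map from Proposition \ref{indeta}, the explicit change-of-basis formula) merely unpack what the paper leaves implicit or defers to the subsequent remark.
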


\begin{proof}
This follows from corollary \ref{indsig} since the analytic density isomorphisms provided by lemma \ref{andens} are $A$-linear isomorphisms of $R$-algebras.
\end{proof}

As a consequence of the theorem, we can (and will) \emph{identify} two twisted differential operators of level $\eta$ with respect to two different endomorphisms when they have the same linearization.

\begin{rmk}
This isomorphism is explicit in the sense that, as in proposition 7.6 of \cite{LeStumQuiros18}, we have for example
\[
\partial_{\sigma} = \sum_{k=1}^\infty \left( \prod_{i=1}^{k-1} \left( \sigma(x) - \tau^i(x)\right)\right) \partial_{\tau}^{[k]}.
\]
\end{rmk}

The particular case $\tau = \mathrm{Id}_{A}$ is worth stating separately:

\begin{cor} \label{lastco}
If $A$ is formally smooth (of relative dimension one) over $R$ and $x$ is also an \'etale coordinate on $A$, then there exists an isometric $A$-linear isomorphism of $R$-algebras
\[
\xymatrix@R0cm{ \mathrm D_{\sigma}^{(\eta)} \ar[r]^-{\simeq} & \mathrm D^{(\eta)}} 
\]
that only depends on $x$. 
\end{cor}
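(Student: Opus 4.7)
The plan is to obtain this as a direct specialization of Theorem \ref{rnghom} to the case $\tau = \mathrm{Id}_{A}$. First I would verify that the hypotheses of that theorem are satisfied for $\tau = \mathrm{Id}_{A}$: the formal smoothness of relative dimension one together with the assumption that $x$ is an étale coordinate means exactly that $x$ is a $\tau$-coordinate in the sense of the paper, since (as noted right after the definition of twisted coordinate) for $\sigma = \mathrm{Id}_{A}$ a (quantum) coordinate is the same as an étale coordinate. Moreover, $\rho(\mathrm{Id}_{A}) = \|x - x\| = 0$, so the condition $\eta \geq \rho(\tau)$ is automatic for any $\eta \geq 0$, hence holds in our setting.

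Next I would invoke Theorem \ref{rnghom} directly to obtain the isometric $A$-linear $R$-algebra isomorphism
\[
\mathrm D^{(\eta)}_{A/R,\sigma} \;\simeq\; \mathrm D^{(\eta)}_{A/R,\mathrm{Id}_{A}},
\]
and then appeal to the convention fixed in the subsection on twisted modules, that the subscript $\sigma$ is dropped (and the adjective ``twisted'' replaced by ``usual'') precisely when $\sigma = \mathrm{Id}_{A}$. That convention means $\mathrm D^{(\eta)}_{A/R,\mathrm{Id}_{A}} = \mathrm D^{(\eta)}$, which gives the stated isomorphism; the dependence only on $x$ is inherited from the corresponding assertion in Theorem \ref{rnghom}.

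There is essentially no obstacle here: the whole content of the corollary is that the deformation theorem applies when the second endomorphism is the identity, and every input has already been arranged in the previous section. The only small point worth flagging is that one should check that ``formally smooth of relative dimension one with étale coordinate $x$'' is genuinely compatible with the framework of the preceding sections (contractive norm, $\eta$-convergence of $A$ with respect to $\mathrm{Id}_{A}$), but $\mathrm{Id}_{A}$ is trivially contractive for any Banach norm on $A$ and formal smoothness together with an étale coordinate guarantees that $A$ is $\eta$-convergent for every $\eta$ small enough — and by the hypothesis that $A$ is already $\eta$-convergent with respect to $\sigma$ for our fixed $\eta$, proposition \ref{indeta} applied with $\tau = \mathrm{Id}_{A}$ automatically gives $\eta$-convergence with respect to the identity as well. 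Thus the hypotheses of Theorem \ref{rnghom} are met and the corollary follows immediately.
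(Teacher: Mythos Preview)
Your proposal is correct and matches the paper's own approach exactly: the paper introduces this corollary with the sentence ``The particular case $\tau = \mathrm{Id}_{A}$ is worth stating separately'' and gives no further argument, so the entire content is indeed the specialization of Theorem~\ref{rnghom} to $\tau = \mathrm{Id}_{A}$. Your extra care in checking $\rho(\mathrm{Id}_{A})=0$ and invoking Proposition~\ref{indeta} for $\eta$-convergence with respect to the identity is sound and simply makes explicit what the paper leaves implicit.
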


This is the \emph{analytic deformation map}.

\begin{rmk}
Again, this isomorphism is explicit.
For example, as shown in corollary 7.7 of \cite{LeStumQuiros18}, we will have
\[
\partial_{\sigma} = \sum_{k=1}^\infty ( \sigma(x) - x)^{k-1} \partial^{[k]} \quad \mathrm{and} \quad \sigma = \sum_{k=0}^\infty ( \sigma(x) - x)^{k} \partial^{[k]}.
\]
\end{rmk}

\section{Confluence}

We still let $(A, \sigma)$ be a twisted affinoid $R$-algebra with twisted coordinate $x$.
We denote by $\rho$ the $x$-radius of $\sigma$ on $A$ with respect to $x$ and some contractive norm.
We also fix some $\eta > \rho$ such that $A$ is $\eta\dagger$-convergent

\begin{dfn}
The ring of \emph{twisted differential operators of level $\eta\dagger$} is
$$
\mathrm D^{(\eta\dagger)}_{\sigma} := \varinjlim_{\eta'<\eta} \mathrm D^{(\eta')}_{\sigma}.
$$
\end{dfn}

\begin{rmks}
\begin{enumerate}
\item
We have the following description:
\begin{align*}
\mathrm D_{\sigma}^{(\eta\dagger)} & = \left\{\sum_{k=0}^\infty z_{k} \partial_{\sigma}^{[k]}, \quad \exists \eta' < \eta, \exists C > 0, \forall k \in \mathbb N, \|z_{k}\| \leq C\eta'^k\right\}
\\
& = \left\{\sum_{k=0}^\infty z_{k} \partial_{\sigma}^{[k]}, \quad \exists \eta' < \eta, \frac {\|z_{k}\|}{\eta'^k}\to 0\right\}.
\end{align*}
\item
When $\sigma = \mathrm{Id}_{A}$ (so that we drop $\sigma$ from our notations), this definition is closely related to the theory of arithmetic differential operators of Pierre Berthelot (see proposition 2.4.4 of \cite{Berthelot96}).
More precisely, if we let $\mathcal X$ be a (one dimensional) smooth affine formal $\mathcal V$-scheme with \'etale coordinate $x$ and if we set $A := \Gamma(\mathcal X, \mathcal O_{\mathcal X\mathbb Q})$, then we have
\[
\Gamma(\mathcal X, \mathcal D_{\mathcal X\mathbb Q}^{\dagger}) = \mathrm D_{A/K}^{(1\dagger)}.
\]
\item
More generally, still assuming $\sigma = \mathrm{Id}_{A}$, let $\mathcal X_{0}$ be a smooth formal $\mathcal V$-scheme with \'etale coordinate $x$, $\mathcal X$ a blowing up of $\mathcal X_{0}$ centered in an ideal containing $\pi^k$ ($\pi$ a uniformizer) and $\mathcal Y = \mathrm{Spf}(A)$ an affine open subset of $\mathcal X$.
Then, with the notations of \cite{HuygheSchmidtStrauch17*}, we have
\[
\Gamma(\mathcal Y, \mathcal D_{\mathcal X,k}^{\dagger}) = \mathrm D_{A/K}^{(\eta\dagger)}
\]
with $\eta = |\pi|^k$.
\end{enumerate}
\end{rmks}

\begin{prop} \label{catet}
The category of $A$-finite $\mathrm D^{(\eta\dagger)}_{\sigma}$-modules is equivalent (isomorphic) to the subcategory of all $\eta\dagger$-convergent $A$-finite $\mathrm D^{(\infty)}_{\sigma}$-modules.
\end{prop}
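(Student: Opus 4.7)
The plan is to build mutually inverse functors between the two categories, using Propositions \ref{direct} and \ref{prim} (suitably combined with the direct limit definition of $\mathrm D^{(\eta\dagger)}_{\sigma}$) as the two main inputs. Note first that for $\eta'' \leq \eta'$ we have an isometric inclusion $A\{\xi/\eta'\} \hookrightarrow A\{\xi/\eta''\}$ (functions converging on a larger disk converge on a smaller one), and hence by duality (via lemma \ref{andens}) an $A$-linear inclusion $\mathrm D_{\sigma}^{(\eta'')} \hookrightarrow \mathrm D_{\sigma}^{(\eta')}$, so that the transition maps in $\mathrm D_{\sigma}^{(\eta\dagger)} = \varinjlim_{\eta' < \eta} \mathrm D_{\sigma}^{(\eta')}$ are honest injections and $\mathrm D_{\sigma}^{(\eta\dagger)} = \bigcup_{\eta' < \eta} \mathrm D_{\sigma}^{(\eta')}$.

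For the forward functor, take an $A$-finite $\mathrm D^{(\eta\dagger)}_{\sigma}$-module $M$. By restriction along $\mathrm D^{(\infty)}_{\sigma} \hookrightarrow \mathrm D^{(\eta')}_{\sigma} \hookrightarrow \mathrm D^{(\eta\dagger)}_{\sigma}$, $M$ becomes an $A$-finite $\mathrm D^{(\infty)}_{\sigma}$-module, and for each $\eta' < \eta$ an $A$-finite $\mathrm D^{(\eta')}_{\sigma}$-module. Proposition \ref{prim} then tells us $M$ is $\eta'\dagger$-convergent for every $\eta' < \eta$, which by taking sup gives $\mathrm{Rad}(M) \geq \eta$, i.e., $M$ is $\eta\dagger$-convergent.

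For the reverse functor, take an $\eta\dagger$-convergent $A$-finite $\mathrm D^{(\infty)}_{\sigma}$-module $M$. Then $M$ is $\eta'$-convergent for every $\eta' < \eta$, so by proposition \ref{direct} its $\mathrm D^{(\infty)}_{\sigma}$-module structure extends canonically to a $\mathrm D^{(\eta')}_{\sigma}$-module structure. The key compatibility to check is that for $\eta'' \leq \eta' < \eta$, the restriction of the $\mathrm D^{(\eta')}_{\sigma}$-action to $\mathrm D^{(\eta'')}_{\sigma}$ coincides with the $\mathrm D^{(\eta'')}_{\sigma}$-action coming directly from proposition \ref{direct}. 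This is where the canonical nature of proposition \ref{direct} is used: writing $\iota : A\{\xi/\eta'\} \hookrightarrow A\{\xi/\eta''\}$, we have $\theta_{\eta''} = (\mathrm{Id}_{M} \otimes \iota) \circ \theta_{\eta'}$ (both are factorizations of the same twisted Taylor series) and, by the uniqueness clause of lemma \ref{uniq}, $\widetilde{\varphi}_{\eta'} = \widetilde{\varphi}_{\eta''} \circ \iota$ for any $\varphi \in \mathrm D^{(\eta'')}_{\sigma}$. Combining these two identities shows that $(\mathrm{Id}_{M} \otimes \widetilde{\varphi}_{\eta'}) \circ \theta_{\eta'} = (\mathrm{Id}_{M} \otimes \widetilde{\varphi}_{\eta''}) \circ \theta_{\eta''}$. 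Passing to the colimit then yields a well-defined $\mathrm D^{(\eta\dagger)}_{\sigma}$-module structure on $M$.

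It remains to verify that the two functors are mutually inverse and act correctly on morphisms. For the first point, starting with a $\mathrm D^{(\eta\dagger)}_{\sigma}$-action, restricting to $\mathrm D^{(\infty)}_{\sigma}$ and re-extending produces the same action: on each $\mathrm D^{(\eta')}_{\sigma}$ both actions factor through the same twisted Taylor map $\theta_{\eta'}$ (which depends only on $M$, not on the chosen ring structure), so they agree by the uniqueness in lemma \ref{uniq}. In the other direction the round trip is the identity by construction. For morphisms, an $A$-linear $f : M \to N$ commuting with $\mathrm D^{(\infty)}_{\sigma}$ is automatically continuous (being a morphism of finite $A$-modules), and since any element of $\mathrm D^{(\eta\dagger)}_{\sigma}$ acts through a convergent series $\sum z_{k} \partial_{\sigma}^{[k]}$, commutation with each $\partial_{\sigma}^{[k]}$ plus $A$-linearity and continuity of $f$ imply commutation with the whole series. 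The one genuine technical step — and the place where care is required — is the compatibility of $\mathrm D^{(\eta')}_{\sigma}$-actions for varying $\eta'$; everything else is then formal.
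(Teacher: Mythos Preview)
Your proof is correct and follows exactly the approach the paper intends: the paper's own proof is the single line ``Follows from proposition \ref{prim} and \ref{direct},'' and you have simply unpacked what that entails, including the compatibility of the $\mathrm D^{(\eta')}_{\sigma}$-actions for varying $\eta'$ and the verification that the two functors are mutually inverse. One small sharpening: in the round-trip and morphism arguments, the density of $\mathrm D^{(\infty)}_{\sigma}$ you implicitly use holds not in $\mathrm D^{(\eta')}_{\sigma}$ itself but in $\mathrm D^{(\eta'')}_{\sigma}$ for any $\eta'' \in (\eta',\eta)$ (since $\|z_k\|/\eta''^k \leq C(\eta'/\eta'')^k \to 0$), and it is the continuity of the $\mathrm D^{(\eta'')}_{\sigma}$-action (from the proof of proposition \ref{prim}) that then lets you pass to the limit.
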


\begin{proof}

Follows from proposition \ref{prim} and \ref{direct}.
\end{proof}

\begin{thm}[Confluence]
Let $\tau$ be another $R$-endomorphism of $A$ such that $x$ is also a $\tau$-coordinate.
If $\eta > \rho(\tau)$, then the categories of $\eta\dagger$-convergent $A$-finite $\mathrm D^{(\infty)}_{\sigma}$-modules with respect to $\sigma$ and $\tau$ are canonically equivalent.
\end{thm}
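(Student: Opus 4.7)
The plan is to reduce the statement via Proposition \ref{catet} to constructing a canonical isomorphism of $R$-algebras $\mathrm D^{(\eta\dagger)}_\sigma \simeq \mathrm D^{(\eta\dagger)}_\tau$, and then to build this isomorphism by taking a direct limit of the finite-level deformation isomorphisms supplied by Theorem \ref{rnghom}.

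First I would fix a cofinal family of auxiliary radii. Since $\eta > \rho(\sigma)$ and $\eta > \rho(\tau)$, the set
\[
I := \{\eta' \in \mathbb R : \max(\rho(\sigma), \rho(\tau)) \leq \eta' < \eta\}
\]
is cofinal in $(-\infty, \eta)$. Because $A$ is $\eta\dagger$-convergent with respect to $\sigma$, it is $\eta'$-convergent with respect to $\sigma$ for each $\eta' \in I$, and Proposition \ref{indeta} then ensures that $A$ is $\eta'$-convergent with respect to $\tau$ as well, with the \emph{same} twisted Taylor map of radius $\eta'$. In particular, $A$ is $\eta\dagger$-convergent with respect to $\tau$, so Proposition \ref{catet} is applicable on both sides of the desired equivalence.

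Next, for each $\eta' \in I$, Theorem \ref{rnghom} gives an isometric $A$-linear $R$-algebra isomorphism $\Phi_{\eta'} : \mathrm D^{(\eta')}_\sigma \xrightarrow{\simeq} \mathrm D^{(\eta')}_\tau$ depending only on $x$. The critical observation is that these isomorphisms are compatible with the transition maps $\mathrm D^{(\eta'')}_\sigma \hookrightarrow \mathrm D^{(\eta')}_\sigma$ and $\mathrm D^{(\eta'')}_\tau \hookrightarrow \mathrm D^{(\eta')}_\tau$ (for $\eta'' < \eta'$ in $I$) used to define the direct limit $\mathrm D^{(\eta\dagger)}_{\bullet}$. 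Indeed, via the analytic density lemma (Lemma \ref{andens}) and Corollary \ref{indsig}, $\Phi_{\eta'}$ is nothing but the identity on the underlying $A$-module $\mathrm{Hom}_{A\mathrm{-cont}}(A\{\xi/\eta'\}, A)$, while each transition map corresponds under the density lemma to restriction along the inclusion $A\{\xi/\eta'\} \hookrightarrow A\{\xi/\eta''\}$, and this inclusion is the same whether one thinks of these rings as $\sigma$-twisted or $\tau$-twisted (again by Proposition \ref{indeta}). Passing to the direct limit over $\eta' \in I$ therefore yields a canonical isomorphism of $R$-algebras $\Phi : \mathrm D^{(\eta\dagger)}_\sigma \xrightarrow{\simeq} \mathrm D^{(\eta\dagger)}_\tau$.

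Transporting modules along $\Phi$ produces an equivalence between $A$-finite $\mathrm D^{(\eta\dagger)}_\sigma$-modules and $A$-finite $\mathrm D^{(\eta\dagger)}_\tau$-modules, which combined with Proposition \ref{catet} on both sides delivers the stated equivalence of categories of $\eta\dagger$-convergent $A$-finite $\mathrm D^{(\infty)}$-modules. The main obstacle in carrying out this plan is verifying the compatibility of the $\Phi_{\eta'}$ with the transition maps, i.e.\ that the identifications of Corollary \ref{indsig} are natural as $\eta'$ varies; once this naturality is pinned down (it is purely a matter of unwinding that the twisted Taylor maps for $\sigma$ and for $\tau$ agree on $A\{\xi/\eta'\} \hookrightarrow A\{\xi/\eta''\}$), everything else is formal, and the explicit formula for $\Phi$ inherited from Theorem \ref{rnghom} then applies verbatim at level $\eta\dagger$.
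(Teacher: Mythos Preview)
Your proposal is correct and follows the same approach as the paper: the paper's proof is the single sentence ``Follows from the deformation theorem \ref{rnghom} and proposition \ref{catet},'' and what you have written is simply a careful unpacking of that sentence, including the direct-limit compatibility check that the paper leaves implicit.
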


\begin{proof}
Follows from the deformation theorem \ref{rnghom} and proposition \ref{catet}.
\end{proof}

\begin{dfn}
Assume that $x$ is a $q$-coordinate on $A$ and that all positive $q$-integers are invertible in $R$.
Then, a $\nabla_{\sigma}$-module $M$ on $A$ is said to be \emph{$\eta'$-convergent} if
\[
\forall s \in M, \quad \left\|\frac{\nabla^{k}_{\sigma}(s)}{(k)_{q}!}\right\|\eta'^k \to 0\ \mathrm{when}\ k \to \infty,
\]
and $\eta\dagger$-convergent if it is $\eta'$-convergent whenever $\eta' < \eta$.
\end{dfn}

We will denote by $\nabla_{\sigma}\mathrm{-Mod}^{(\eta\dagger)}(A/R)$ the full subcategory of all finite $\nabla_{\sigma}$-modules $M$ that are $\eta\dagger$-convergent.

The condition that positive $q$-integers are invertible is necessary for the definition to make sense.
If $q \in K$, then this happens exactly when $q$ is not a root of unity or else if $q = 1$ and $\mathrm{Char}(K) = 0$.

\begin{prop} \label{invis}
Assume that $x$ is a $q$-coordinate and that all positive $q$-integers are invertible.
If $A$ is $\eta\dagger$-convergent, then there exists an equivalence of categories
\[
\nabla_{\sigma}\mathrm{-Mod}^{(\eta\dagger)}(A/R) \simeq A\mathrm{-finite}\ \mathrm D^{(\eta\dagger)}_{A/R,\sigma}\mathrm{-Mod}
\]
which is compatible with cohomology in the sense that
\[
\mathrm{H}_{\partial_{\sigma}}^*(M) = \mathrm{Ext}^*_{D^{(\eta\dagger)}_{\sigma}}(A, M).
\]
\end{prop}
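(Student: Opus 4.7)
The plan is to chain together earlier results. Since all positive $q$-integers are invertible in $R$, the epi-mono factorization $\mathrm D_\sigma \twoheadrightarrow \overline{\mathrm D}_\sigma \hookrightarrow \mathrm D^{(\infty)}_\sigma$ becomes a chain of isomorphisms, and the paper has already recorded the equivalence $\nabla_\sigma\mathrm{-Mod}(A/R) \simeq A\mathrm{-finite}\ \mathrm D^{(\infty)}_{A/R,\sigma}\mathrm{-Mod}$ together with the cohomological identification $\mathrm H^*_{\partial_\sigma}(M) \simeq \mathrm{Ext}^*_{\mathrm D^{(\infty)}_\sigma}(A, M)$. Combined with Proposition \ref{catet}, which identifies $A\mathrm{-finite}\ \mathrm D^{(\eta\dagger)}_\sigma\mathrm{-Mod}$ with the full subcategory of $\eta\dagger$-convergent objects inside $A\mathrm{-finite}\ \mathrm D^{(\infty)}_\sigma\mathrm{-Mod}$, it remains to check (a) that the two notions of $\eta\dagger$-convergence agree under the identification, and (b) that $\mathrm{Ext}^*_{\mathrm D^{(\infty)}_\sigma}(A,M) \simeq \mathrm{Ext}^*_{\mathrm D^{(\eta\dagger)}_\sigma}(A,M)$ for such $M$.

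Part (a) is a direct calculation. Under the identification, the action of $\nabla_\sigma$ on a section $s \in M$ is the action of $\partial_\sigma$, and $\partial^k_\sigma = (k)_q!\,\partial^{[k]}_\sigma$ in $\mathrm D^{(\infty)}_\sigma$. Hence $\nabla^k_\sigma(s)/(k)_q! = \partial^{[k]}_\sigma(s)$, and the growth conditions $\|\nabla^k_\sigma(s)/(k)_q!\|\eta'^k \to 0$ and $\|\partial^{[k]}_\sigma(s)\|\eta'^k \to 0$ are literally the same for every $\eta' < \eta$, so the two subcategories coincide.

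For (b), I would exhibit a short free resolution of $A$ as a left $\mathrm D^{(\eta\dagger)}_\sigma$-module,
\[
0 \to \mathrm D^{(\eta\dagger)}_\sigma \xrightarrow{\,\cdot\, \partial_\sigma} \mathrm D^{(\eta\dagger)}_\sigma \to A \to 0,
\]
where the right arrow is the augmentation $\sum z_k \partial^{[k]}_\sigma \mapsto z_0$. Applying $\mathrm{Hom}_{\mathrm D^{(\eta\dagger)}_\sigma}(-, M)$ then recovers the two-term complex $M \xrightarrow{\partial_\sigma} M$ and yields the required isomorphism $\mathrm{Ext}^*_{\mathrm D^{(\eta\dagger)}_\sigma}(A, M) \simeq \mathrm H^*_{\partial_\sigma}(M)$. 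The main obstacle is exactness in the Banach inductive-limit setting: injectivity of $\cdot\, \partial_\sigma$ is immediate since $(k+1)_q$ is invertible, but surjectivity onto the augmentation ideal requires that the formal inverse $\partial^{[k]}_\sigma \mapsto \frac{1}{(k)_q}\partial^{[k-1]}_\sigma$ preserve the growth condition defining $\mathrm D^{(\eta\dagger)}_\sigma$. In the cases covered by the hypothesis ($q$ not a root of unity, or $q = 1$ with $\mathrm{Char}(K) = 0$), one checks that $|(k)_q|$ does not decay faster than geometrically, so the slack between any two bounds $\eta'' < \eta' < \eta$ absorbs the contribution of $|(k)_q^{-1}|$ and the resolution is exact, concluding the proof.
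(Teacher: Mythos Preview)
Your proposal is correct and follows essentially the same route as the paper: identify $\nabla_\sigma$-modules with $\mathrm D_\sigma^{(\infty)}$-modules via $\mathrm D_\sigma = \mathrm D_\sigma^{(\infty)}$ and the relation $\partial_\sigma^k = (k)_q!\,\partial_\sigma^{[k]}$, match the two $\eta\dagger$-convergence conditions, invoke Proposition~\ref{catet}, and compute cohomology from the Spencer resolution $[\mathrm D_\sigma^{(\eta\dagger)} \xrightarrow{\cdot\,\partial_\sigma} \mathrm D_\sigma^{(\eta\dagger)}] \simeq A$. The paper simply asserts the Spencer resolution; your discussion of why exactness survives in the $\eta\dagger$-limit (via subexponential growth of $|(k)_q|^{-1}$) is more than the paper provides, though you should note that the stated hypothesis is only that the $(k)_q$ are invertible in $R$, which is a priori weaker than the growth bound you invoke.
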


\begin{proof}
We know that, if $D_{\sigma}$ denotes the twisted Weyl algebra of $(A,\sigma)$, then the category of finite $\nabla_{\sigma}$-modules is equivalent to the category of finite $D_{\sigma}$-modules.
Moreover in our situation, we have $\mathrm D_{\sigma} = \mathrm D_{\sigma}^{(\infty)}$.
Therefore, since for all $k \in \mathbb N$, we have $\partial_{\sigma}^k = (k)_{q}!\partial_{\sigma}^{[k]}$, we can identify $\eta$-convergent $\nabla_{\sigma}$-modules and $\eta$-convergent $\mathrm D^{(\infty)}_{\sigma}$-modules.
Hence, the first result follows from proposition \ref{catet}.
In order to prove the second one, we may use the Spencer resolution
$$
[D^{(\eta\dagger)}_{\sigma} \overset {\partial_{\sigma}} \longrightarrow D^{(\eta\dagger)}_{\sigma}] \simeq A
$$
(the map is multiplication on the right by $\partial_{\sigma})$.
\end{proof}

As a consequence of the confluence theorem, we obtain the following theorem (which is essentially theorem 6.3 ii) of \cite{Pulita08}) that we state with all the necessary hypothesis for the convenience of the reader:

\begin{thm}[Confluence-bis]\label{confl}
Let $K$ be a non trivial complete ultrametric field of characteristic zero.
Let $R$ be an affinoid $K$-algebra and $q \in R$ such that all positive $q$-integers are invertible.
Let $A$ be a formally smooth affinoid $R$-algebra (of relative dimension one) such that $A$ is $\eta\dagger$-convergent with respect to some \'etale coordinate $x$ for some $\eta \geq 0$.
Assume that $x$ is a $q$-coordinate for an $R$-algebra endomorphism $\sigma$ of $A$ with $\eta \geq \rho_{x}(\sigma)$.

Then there exists an equivalence of categories
\[
\nabla_{\sigma}\mathrm{-Mod}^{(\eta\dagger)}(A/R) \simeq \nabla\mathrm{-Mod}^{(\eta\dagger)}(A/R)
\]
which is compatible with cohomology on both sides.
\end{thm}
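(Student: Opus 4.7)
The plan is to chain together the confluence theorem with proposition \ref{invis} applied on both sides, using $\tau = \mathrm{Id}_{A}$ as the comparison endomorphism.

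First, I would verify that all hypotheses of proposition \ref{invis} are satisfied on each side. On the $\sigma$-side, $x$ is a $q$-coordinate by assumption, and the positive $q$-integers are invertible in $R$ by hypothesis, so the proposition gives an equivalence
\[
\nabla_{\sigma}\mathrm{-Mod}^{(\eta\dagger)}(A/R) \simeq A\mathrm{-finite}\ \mathrm D^{(\eta\dagger)}_{A/R,\sigma}\mathrm{-Mod}
\]
compatible with cohomology. On the $\tau = \mathrm{Id}_{A}$ side, $x$ is an étale coordinate, which is the same as a $1$-coordinate, so we take $q = 1$; the positive $1$-integers are just the positive integers, and these are invertible in $R$ precisely because $R$ is a $K$-algebra with $K$ of characteristic zero. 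Note also that $\rho_{x}(\mathrm{Id}_{A}) = 0 \leq \eta$, and $A$ is automatically $\eta\dagger$-convergent with respect to $\mathrm{Id}_{A}$ as well (by proposition \ref{indeta} applied in reverse, or directly from $\eta\dagger$-convergence with respect to $\sigma$ which does not depend on the endomorphism once $\eta \geq \rho$). Thus proposition \ref{invis} also gives the usual equivalence
\[
\nabla\mathrm{-Mod}^{(\eta\dagger)}(A/R) \simeq A\mathrm{-finite}\ \mathrm D^{(\eta\dagger)}_{A/R}\mathrm{-Mod},
\]
again compatible with cohomology.

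Second, I would bridge the two right-hand sides by invoking the confluence theorem with $\tau = \mathrm{Id}_{A}$. Since $\eta > \rho(\mathrm{Id}_{A}) = 0$, the theorem gives a canonical equivalence between $\eta\dagger$-convergent $A$-finite $\mathrm D^{(\infty)}_{\sigma}$-modules and $\eta\dagger$-convergent $A$-finite $\mathrm D^{(\infty)}$-modules. Combined with proposition \ref{catet}, this translates into an equivalence $A\mathrm{-finite}\ \mathrm D^{(\eta\dagger)}_{\sigma}\mathrm{-Mod} \simeq A\mathrm{-finite}\ \mathrm D^{(\eta\dagger)}\mathrm{-Mod}$. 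Composing the three equivalences yields the desired equivalence of categories.

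For the cohomology statement, I would trace through the chain: the outer two equivalences are cohomology-preserving by proposition \ref{invis}, while the middle equivalence arises directly from the ring isomorphism $\mathrm D^{(\eta\dagger)}_{\sigma} \simeq \mathrm D^{(\eta\dagger)}$ produced by passing to the direct limit in corollary \ref{lastco} (the particular case $\tau = \mathrm{Id}_{A}$ of the deformation theorem). Since this is an isomorphism of $R$-algebras sending $A$ to $A$ as a module, it preserves $\mathrm{Ext}^{*}(A,-)$ tautologically. Chaining the isomorphisms gives
\[
\mathrm H^{*}_{\partial_{\sigma}}(M) \simeq \mathrm{Ext}^{*}_{\mathrm D^{(\eta\dagger)}_{\sigma}}(A,M) \simeq \mathrm{Ext}^{*}_{\mathrm D^{(\eta\dagger)}}(A,M) \simeq \mathrm H^{*}_{\partial}(M).
\]

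The main obstacle I anticipate is purely bookkeeping rather than substantive: making sure that the $\eta\dagger$-convergence hypothesis transports correctly to $\tau = \mathrm{Id}_{A}$, and that the direct limit of the ring isomorphisms from corollary \ref{lastco} over $\eta' < \eta$ is compatible with the restriction maps defining $\mathrm D^{(\eta\dagger)}_{\sigma}$. Both should follow from the explicit formula for the deformation isomorphism (which only depends on $x$) together with the fact that corollary \ref{lastco} produces isometries, so the direct limits match up functorially in $\eta'$. Once this is checked, everything else is a formal consequence of results already established.
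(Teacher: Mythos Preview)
Your proposal is correct and follows essentially the same route as the paper: apply proposition \ref{invis} on both the $\sigma$-side and the $\mathrm{Id}_{A}$-side (the latter using that $x$ is a $1$-coordinate and that positive integers are invertible since $\mathrm{char}(K)=0$), and bridge the two via the ring isomorphism $\mathrm D^{(\eta\dagger)}_{\sigma} \simeq \mathrm D^{(\eta\dagger)}$ obtained by passing to the limit in corollary \ref{lastco}. The paper's proof is the one-line ``follows from proposition \ref{invis} and corollary \ref{lastco}''; your version simply unpacks the hypothesis checks and the cohomology compatibility more explicitly, including the observation that the $\mathrm{Ext}$ groups match because the equivalence comes from a genuine ring isomorphism fixing $A$.
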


\begin{proof}
This follows from proposition \ref{invis} and corollary \ref{lastco}.
\end{proof}

\begin{cor}
In the situation of the theorem, if we assume that $x$ is a strong quantum coordinate for $\sigma$, then there exists a fully faithful functor
\begin{equation} \label{last}
\nabla\mathrm{-Mod}^{(\eta\dagger)}(A/R) \hookrightarrow \sigma_{A}\mathrm{-Mod}(A/R)
\end{equation}
which is compatible with cohomology on both sides. $\quad \Box$
\end{cor}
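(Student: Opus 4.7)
The plan is to realize the functor \eqref{last} as a composition of two constructions already built in the paper, one coming from the confluence theorem \ref{confl} and the other from the treatment of strong twisted coordinates in the subsection on twisted Weyl algebras.

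First, I would invoke theorem \ref{confl} directly, which under the hypotheses of the corollary gives an equivalence
\[
\nabla\mathrm{-Mod}^{(\eta\dagger)}(A/R) \simeq \nabla_{\sigma}\mathrm{-Mod}^{(\eta\dagger)}(A/R)
\]
that is compatible with cohomology on both sides. This takes care of passing from the usual connection side to the twisted connection side while preserving the radius condition and the cohomology computation.

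Next, I would use the discussion surrounding the morphism \eqref{strong}. Since $x$ is a strong quantum coordinate for $\sigma$ (so in particular $\sigma \neq \mathrm{Id}_{A}$ and $x - \sigma(x) \in A^{\times}$), that morphism $A[T]_{\sigma} \to \mathrm D_{\sigma}$, $T \mapsto 1 - (x - \sigma(x))\partial_{\sigma}$, is an isomorphism. Combined with the equivalences $\nabla_{\sigma}\mathrm{-Mod}(A/R) \simeq A\mathrm{-finite}\ \mathrm D_{\sigma}\mathrm{-Mod}$ and $\sigma\mathrm{-Mod}(A/R) \simeq A\mathrm{-finite}\ A[T]_{\sigma}\mathrm{-Mod}$, this yields an equivalence
\[
\nabla_{\sigma}\mathrm{-Mod}(A/R) \simeq \sigma_{A}\mathrm{-Mod}(A/R)
\]
with the stated cohomology identification $\mathrm H^{*}_{\sigma}(M) = \mathrm H^{*}_{\partial_{\sigma}}(M)$. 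Restricting this equivalence to the full subcategory of $\eta\dagger$-convergent objects produces the desired fully faithful functor
\[
\nabla_{\sigma}\mathrm{-Mod}^{(\eta\dagger)}(A/R) \hookrightarrow \sigma_{A}\mathrm{-Mod}(A/R),
\]
which is fully faithful because its source is a full subcategory on one side of an equivalence. Composing with the equivalence from theorem \ref{confl} produces the functor \eqref{last}.

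The only point that requires care is compatibility with cohomology. This is built into each of the two pieces: theorem \ref{confl} already states it, and the equivalence coming from \eqref{strong} identifies the cohomology of $\sigma$-modules (computed via $\mathrm{Ext}^{*}_{A[T]_{\sigma}}(A,-)$, equivalently $[M \xrightarrow{1-\sigma_{M}} M]$) with the cohomology of $\nabla_{\sigma}$-modules (computed via $[M \xrightarrow{\partial_{\sigma}} M]$); composing the two identifications gives the compatibility asserted in the corollary. I do not expect a serious obstacle — everything is plumbing on top of the earlier results — so the proof reduces to a one-line reference to proposition \ref{invis} (or theorem \ref{confl}) together with the strong-coordinate equivalence.
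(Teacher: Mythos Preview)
Your proposal is correct and is exactly the intended argument: the paper leaves this corollary with a bare $\Box$ because it follows immediately from composing the equivalence of theorem \ref{confl} with the equivalence $\nabla_{\sigma}\mathrm{-Mod}(A/R) \simeq \sigma\mathrm{-Mod}(A/R)$ (and its cohomological compatibility) established around \eqref{strong} under the strong-coordinate hypothesis. You have unpacked precisely these two steps.
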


We may consider this functor as an embedding and identify an $\eta\dagger$-convergent $\nabla$-module with the corresponding quantum module.

\begin{rmks}
\begin{enumerate}
\item The hypothesis on $K$ are satisfied when $K = \mathbb Q_{p}$ or $K = \mathbb C_{p}$ or $K = \mathbb Q((T))$ for example.
\item The hypothesis on $q$ is satisfied for example if $q \in K$ and $q$ is not a root of $1$.
\item The extra condition in the corollary is satisfied if $x \in A^\times$ and $1-q \in R^\times$.
\item  When $R=K$ and $A = K\{x/r, r_{1}/x\}$, the conditions on $A$ (and $\sigma$) can be summarized (when $q$ is not a root of $1$) as
\[
|h| \leq \eta < r_{1} \quad \mathrm{and} \quad |1 - q| \leq \frac \eta r.
\]
\item
The functor \eqref{last} is completely explicit: if we are given a finite $A$-module $M$, endowed an $\eta\dagger$-convergent connection, then the corresponding twisted module will be given by
\begin{equation} \label{expli}
\sigma_{M}(s) = \sum_{k=0}^\infty  \frac {((q-1)x+h)^{k}}{k!} \partial_{M}^{k}(s).
\end{equation}
In other words, it is obtained by replacing $\xi$ by $\sigma(x) -x$ in the twisted Taylor series:
\[
\sigma_{M}(s) = \theta_{\eta}(s)(\sigma(x) -x).
\]
Formally, we may also write $\sigma_{M} = \exp((\sigma(x)-x)\partial_{M})$.
\item In the case $h =0$, one can also use the logarithmic derivative:
if we set
\[
q^{x\partial} := \sum_{k=0}^\infty \log(q)^k(x\partial)^k,
\]
then we have for all $n \in \mathbb N$,
\[
q^{x\partial}(x^n)= \sum_{k=0}^\infty \log(q)^k(x\partial)^k(x^n) = \sum_{k=0}^\infty \log(q)^kn^kx^n = q^nx^n = \sigma(x^n)
\]
and it formally follows that
\[
\sigma_{M}(s) = (q^{x\partial_{M}})(s) \ \left(= \sum_{k=0}^\infty \log(q)^k(x\partial_{M})^k(s)\right).
\]
\item
Compatibility with cohomology means that $H^*_{\partial}(M) = H^*_{\sigma}(M)$.
In particular, the solutions of the differential system associated to the $\nabla$-module $M$ are exactly the same as the solutions of the functional system associated to the $\sigma$-module $M$.
\end{enumerate}
\end{rmks}

\begin{xmps}
\begin{enumerate}
\item
If we consider the differential equation $\partial (s) = cs$, then we will have $\partial^k(s) = c^ks$ for all $k \in \mathbb N$ and $\sigma$ will be multiplication by 
\[
\sum_{k=0}^\infty  \frac {c^k((q-1)x+h)^{k}}{k!} = \exp(c((q-1)x+h)).
\]
Of course, the differential equation $\partial (s) = cs$ has the same solution $\exp(cx)$ as the functional equation
\[
s(qx+h) = \exp(c((q-1)x+h))s(x).
\]
\item
If we consider now the differential equation $\partial (s) = \frac a xs$, then we will have
\[
\partial^k(s) = \frac {a(a-1) \cdots (a-k+1)} {x^k}s
\]
for all $k \in \mathbb N$ and $\sigma$ will be the multiplication by
\[
\sum_{k=0}^\infty  \frac {a(a-1) \cdots (a-k+1)} {k!}  \left(q + \frac hx - 1\right)^{k} = \left(q + \frac hx\right)^a.
\]
Again, one easily checks that the differential equation $\partial (s) = \frac a xs$ has the same solution $x^a$ as the functional equation
\[
s(qx+h) = \left(q + \frac hx\right)^as(x).
\]
\end{enumerate}
\end{xmps}

\addcontentsline{toc}{section}{References}
\printbibliography

\end{document}